\newtheorem{theorem}{Theorem}[section]
\newtheorem{lemma}{Lemma}[section]
\newtheorem{proposition}[theorem]{Proposition}
\newtheorem{corollary}[theorem]{Corollary}
\newtheorem{assumption}{Assumption}[section]
\theoremstyle{remark}
\newtheorem{definition}[theorem]{Definition}
\newtheorem{notation}{Notation}[section]
\newtheorem{remark}{Remark}[section]
\newcommand{\RR}{\mathbb{R}}
\newcommand{\ZZ}{\mathbb{Z}}
\newcommand{\NN}{\mathbb{N}}
\newcommand{\A}{\textbf{A}}
\newcommand{\B}{\textbf{B}}
\newcommand{\dd}{\textup{d}}
\newcommand{\V}{\mathcal{V}}
\begin{document}
\title{Magnetic WKB constructions on surfaces}

\author{Y. Guedes Bonthonneau} \address[Y. Guedes Bonthonneau]{Univ
  Rennes, CNRS, IRMAR - UMR 6625, F-35000 Rennes, France}
\email{yannick.bonthonneau@univ-rennes1.fr}

\author{T. Nguyen Duc} \address[T. Nguyen Duc]{Univ Rennes, CNRS,
  IRMAR - UMR 6625, F-35000 Rennes, France}
\email{duc-tho.nguyen@univ-rennes1.fr}

\author{N. Raymond} \address[N. Raymond]{Laboratoire Angevin de
  Recherche en Mathématiques, LAREMA, UMR 6093, UNIV Angers, SFR
  Math-STIC, 2 boulevard Lavoisier 49045 Angers Cedex 01, France}
\email{nicolas.raymond@univ-angers.fr}

\author{S. V\~u Ng\d{o}c} \address[S. V\~u Ng\d{o}c]{Univ Rennes,
  CNRS, IRMAR - UMR 6625, F-35000 Rennes, France}
\email{san.vu-ngoc@univ-rennes1.fr}
\maketitle

\begin{abstract}
  This article is devoted to the description of the eigenvalues and
  eigenfunctions of the magnetic Laplacian in the semiclassical limit
  via the complex WKB method. Under the assumption that the magnetic
  field has a unique and non-degenerate minimum, we construct the
  local complex WKB approximations for eigenfunctions on a general
  surface. Furthermore, in the case of the Euclidean plane, with a
  radially symmetric magnetic field, the eigenfunctions are
  approximated in an exponentially weighted space.
\end{abstract}




\section{Introduction}
\subsection{Magnetic Laplacian on a Riemannian manifold}
\ Let $(M,g)$ be a two-dimensional connected oriented Riemannian
manifold (possibly with boundary) equipped with a metric $g$. Let $\textbf{A}$ be a smooth real-valued $1$-form defined on
$M$. Since $M$ is two-dimensional, there exists a smooth real-valued
function $\B$ such that
\[ \dd \textbf{A} = \textbf{B}\, \dd \V_{g}\,,\] in which $\dd \V_{g}$
is the Riemannian volume form on $M$. We call $\textbf{A}$ the
magnetic potential and $\textbf{B}$ the magnetic field.  Let
$\hat{g} : T M \to T^{*}M$ be the canonical isomorphism induced by
$g$, \emph{i.e.} for each $p\in M$ and $V\in T_{p} M$,
\begin{equation}
  \hat{g}_{p}(V)(W) = g_{p}(V,W) \qquad \text{ for all } W\in T_{p}M \,.
\end{equation}
Let $g^{*}$ be the dual metric on the set of $1$-forms on $M$:
\[
  g_{p}^{*}(w_{1} ,w_{2}) = g_{p}(\hat{g}_{p}^{-1}
  (w_1),\hat{g}_{p}^{-1} (w_2)) \qquad \text{ for all } w_{1} ,w_{2}
  \in T_{p}^{*}M \,.
\]
The magnetic Laplacian can de defined as follows.  When $M$ is a
compact manifold, possibly with boundary, consider the sesquilinear
form defined for complex-valued functions $u,v \in H_{0}^1(M)$ by
\[ Q_{h,\textbf{A}}(u,v) = \int_M g^{*}\left((-ih\dd
    -\textbf{A})u,(-ih\dd -\textbf{A})v\right)\, \dd \V_{g}\,.\] From
the Lax-Milgram Theorem, $Q_{h,\textbf{A}}$ is associated with a
self-adjoint operator $\mathscr{L}_{h,\textbf{A}}$ whose domain is
given by
\begin{align*}
  \textup{Dom}(\mathscr{L}_{h,\textbf{A}})
  & = \left\{\begin{aligned}
      u \in H_{0}^1(M): &
      \text{ there exists } f \in \mathrm{L}^{2}(M)  \text{ such that } \\
      & Q_{h,\textbf{A}}(u,v) = \langle f,v \rangle_{\mathrm{L}^{2}(M)}
      \qquad \text{for all } v \in H_{0}^1(M)
    \end{aligned}\right\}\\
  & =\mathrm{H}_{0}^{1}(M) \cap
    \mathrm{H}^{2}(M)\,,
\end{align*}
and
\[ \left\langle \mathscr{L}_{h,\textbf{A}} u ,
    v\right\rangle_{\mathrm{L}^{2}(M)}= Q_{h,\textbf{A}}(u,v) \qquad
  \forall u\in \textup{Dom}(\mathscr{L}_{h,\textbf{A}}), \forall v \in
  H_{0}^1(M)\,.\] Choosing local coordinates $(x^1,x^2)$ on $M$, the operator
$\mathscr{L}_{h,\textbf{A}}$ can be written explicitly as
\begin{equation}\label{EQ ML on Manifold}
  \mathscr{L}_{h,\textbf{A}}= \frac{1}{\sqrt{\vert G \vert}} \sum_{k,\ell=1}^{2} \left(h\mathrm{D}_{k}-A_{k} \right) \left[\sqrt{\vert G \vert}G^{k\ell} \left(h\mathrm{D}_{\ell}-A_{\ell} \right) \right]\,,
\end{equation}
where $\mathrm{D}_{j}:= -i\frac{\partial}{\partial x^{j}} $, $G$ is
the matrix associated with $g$, and $G^{k\ell}$ are the matrix
elements of $G^{-1}$.

\begin{remark}
When $M$ is compact, we have $H_{0}^{1}(M)=H^{1}(M)$ and
  \begin{equation*}
    \textup{Dom}(\mathscr{L}_{h,\textbf{A}}) = H^{2}(M)\,.
  \end{equation*}
  The readers may consult \cite{HM96,Shubin01,HK11} for an
  introduction to the magnetic Laplacian on a Riemannian manifold. We
  also refer the reader to the book \cite{Raymond17} for the study of
  the magnetic Laplacian under various aspects.
\end{remark}
In this paper, we will also consider the non-compact case $M =\RR^2$
with the Euclidean metric. In this case, the operator is characterized
by
\begin{equation*}
  \left\{
    \begin{aligned}
      &\textup{Dom}(\mathscr{L}_{h,\textbf{A}}) = \{u \in \mathrm{H}_{h,\textbf{A}}^{1}(\RR^2) : (-i h\nabla -\A )^2 u \in \mathrm{L}^2(\RR^2)\},\\
      &\mathscr{L}_{h,\textbf{A}} u := (-i h\nabla -\A )^2 u.
    \end{aligned}
  \right.
\end{equation*}
Since $\A \in \mathscr{C}^1(\RR^2)$, the operator
$(-i h\nabla -\A )^2$ whose domain is
$\mathscr{C}_{0}^{\infty}(\RR^2)$ is essentially self-adjoint (see
\cite{FH10}).

\subsection{Context and motivation}
One of the initial motivations to study the spectral theory of the
magnetic Schr\"{o}dinger operator was the mathematical study of
superconductivity, see \cite{FH10}. The ground-energy is indeed
related to the third critical field in the Ginzburg-Landau
theory. From this initial motivation, the spectral theory of this
operator acquired a life of its own, see the series of papers by
Helffer-Morame, Helffer-Kordyukov, and Raymond-V\~u Ng\d oc
\cite{HM96,HM01,HM02,HM04,HK09,R09,HK11,HK12,HK14,RVN15,HK15,HKRVN16}. Among
this vast literature, the case of the magnetic field having a unique
and non-degenerate minimum was investigated very much. Namely, in
\cite[Theorem 1.2]{HK11} (on a compact manifold) or in \cite[Theorem
1.7]{HK15} (on $\RR^2$), Helffer and Kordyukov provided the following
asymptotic expansions
\begin{equation}\label{Eigenvalue Expansion}
  \forall \ell \in \NN, \qquad\lambda_{\ell}(\mathscr{L}_{h,\A}) = \B(p_0)h + \left(2\ell \frac{\sqrt{\det H}}{\B(p_0)} + \frac{( \mathrm{Tr}\, H^{\frac{1}{2}})^2}{2\B(p_0)} \right) h^2 + o(h^2)\,,
\end{equation}
where $p_0$ is the minimum point of $\B$ and
$H=\frac{1}{2} \mathrm{Hess} \,\B(p_0)$ .

We can also mention that, with the help of symplectic geometry and
pseudo-differential techniques, Raymond and V\~{u} Ng\d{o}c recovered
the eigenvalues expansions through a Birkhoff normal form and related
them to the magnetic classical dynamics, see \cite{RVN15}. In
\cite{BR17}, Bonthonneau and Raymond used a WKB analysis to recover
\eqref{Eigenvalue Expansion} under analyticity assumptions on the
magnetic field, when the metric is flat. Such expansions were not
known before, except in a multi-scale context, see \cite{BNHR16}. The
reader might also want to consider the well-known results about the
WKB analysis in the electric case, see \cite[Chapter 3]{DS99}.

The aim of this paper is to extend the analysis of \cite{BR17} to the
case non-flat metrics, and to remove the analyticity
assumptions. Moreover, in the case $\RR^2$ and with radial magnetic
fields, we will explicitely describe the magnetic eigenfunctions, and
establish some optimal exponential decay away from the minimum of the
magnetic field. Such an optimal decay is still a widely open problem
in the general case, see however \cite{GBRVN19, BHR20}.

\section{Statements}

\subsection{General case}\label{Manifold}
Let us state our main assumption on the magnetic field $\B$.
\begin{assumption}\label{Assumption on Manifold}
  Let $p_0\in M$, we assume that
  \begin{enumerate}[\rm (i)]
  \item The magnetic field $B\in \mathscr{C}^{\infty}(M,\RR)$ has a positive
    minimum at $p_0$, i.e.
    \[
      \B(p_0) = \min_{p \in M} \B(p) >0\,.
    \]
  \item The Hessian of $\B$ at $p_0$ is positive
    non-degenerate, i.e.
    \[
      (\dd^2 \B)_{p_0}(V,V)>0\,, \qquad \qquad \text{ for all } V \in
      T_{p_0} M \backslash \{0\} \,.
    \]
  \end{enumerate}
\end{assumption}
\begin{remark}
Note that, if $M$ is a compact manifold without boundary, our first
assumption cannot be satisfied since
\begin{equation*}
  \int_{M} \B \dd \V_{g}=\int_{M}  \dd \A = 0\,.
\end{equation*}
Nevertheless, this is not a big issue since our constructions are local.
\end{remark}
In order to state our main theorem, it is convenient to use the
isothermal coordinates, see \cite[Page 438]{Taylor11}.
\begin{definition}[Isothermal coordinates]\label{Def of Iso Coor}
  Let $(M,g)$ be a Riemannian manifold of two dimensions, a local
  chart
$$(\Omega,\phi : \Omega \to \phi(\Omega) \subset \RR^2  )$$
is called an isothermal chart if there exists a function
$\eta \in \mathscr{C}^{\infty}(\phi(\Omega) )$ such that
\begin{equation}
  \phi^{*}\left( e^{2 \eta }  g_0 \right) = g\,,
\end{equation}
where $g_0$ is the Euclidean metric on $\RR^2$.
\end{definition}

\begin{theorem}\label{IN TH WKB}
  Let $p^{*}\in M$ and assume that the magnetic field $\B$ has a local
  positive minimum at $p^{*}$ and its Hessian at $p^{*}$ is positive
  non-degenerate. Then, there exists an isothermal local chart
  $(\Omega,\phi: \Omega \to U \subset \RR^2)$ centered at $p^{*}$ in
  which the magnetic field has the form
  \[(\B\circ\phi^{-1})(q) = b_0 + \alpha q_1^2 + \gamma q_2^2 +
    \mathcal{O}(\Vert q \Vert^3)\,,\] where $b_0>0$,
  $0<\alpha\leq \gamma$ and for all $\ell\in \NN$, there exist
  \begin{enumerate}[\rm (i)]
  \item a smooth complex-valued function $P$ defined on $\Omega$
    satisfying
    \begin{equation}
      \mathrm{Re}(P\circ \phi^{-1})(q) = \frac{e^{2\eta(0)}b_0}{2}\left( \frac{\sqrt{\alpha}}{\sqrt{\alpha}+\sqrt{\gamma}}q_1^2 + \frac{\sqrt{\gamma}}{\sqrt{\alpha}+\sqrt{\gamma}}q_2^2 \right)+\mathcal{O}(\Vert q\Vert^3)\,,
    \end{equation}
    on $U$, where $\eta$ is given in Definition \ref{Def of Iso Coor},
  \item a sequence of smooth complex-valued functions
    $(U_{\ell,j})_{j\in \NN}$ defined on $\Omega$,
  \item a sequence of real numbers $(\mu_{\ell,j})_{j\in N} $ with
    \[ \mu_{\ell,0}=b_{0},\qquad \mu_{\ell,1}= \left(2\ell
        \frac{\sqrt{\det H}}{b_0} + \frac{( \mathrm{Tr}\,
          H^{\frac{1}{2}})^2}{2b_0} \right)\,,\]
  \item a sequence of smooth functions
    $(\mathit{F}_{\ell,j})_{j\in \NN}$ defined on $\Omega$ and flat at
    $p^{*}$,
  \end{enumerate}
  such that, for all $J\in \NN$,
  \[e^{P/h}\left(\mathscr{L}_{h,\A} -h\sum_{j=0}^{J}\mu_{\ell,j}h^{j}
    \right)\left(e^{-P/h}\sum_{j=0}^{J} U_{\ell,j}h^{j} \right)=
    \sum_{j=0}^{J+1} h^{j}\mathit{F}_{\ell,j}
    +\mathcal{O}(h^{J+2})\,, \] locally uniformly on $\Omega$.
\end{theorem}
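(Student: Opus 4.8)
The plan is to pass to the isothermal chart and reduce the construction to a conjugation-and-recursion scheme in one complex variable. In the chart $\phi$ of Definition~\ref{Def of Iso Coor} one has $\mathscr{L}_{h,\A}=e^{-2\eta}\big((h\mathrm{D}_1-A_1)^2+(h\mathrm{D}_2-A_2)^2\big)$; after a gauge change making $A$ vanish at $p^{*}$ and a conformal--linear change of coordinates (a rotation composed with a dilation, which keeps the chart isothermal) diagonalising $\mathrm{Hess}\,\B(p^{*})$, one obtains $(\B\circ\phi^{-1})(q)=b_0+\alpha q_1^2+\gamma q_2^2+\mathcal{O}(\Vert q\Vert^3)$ with $0<\alpha\le\gamma$, both coefficients positive since the Hessian of $\B$ at $p^{*}$ is positive definite. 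Write $z=q_1+iq_2$, $\partial_{\bar z}=\tfrac12(\partial_1+i\partial_2)$ and $\tilde\B=\B e^{2\eta}$.

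The core of the proof is the construction of the phase $P$. I would first solve the eikonal equation $\partial_{\bar z}P=\tfrac12(A_2-iA_1)$ --- either exactly, by a Cauchy transform, or to infinite order at $p^{*}$; it is equivalent to $(A_1-i\partial_1 P)+i(A_2-i\partial_2 P)=0$, hence annihilates the zeroth--order (multiplicative) $h^0$--part $e^{-2\eta}\big((A_1-i\partial_1 P)^2+(A_2-i\partial_2 P)^2\big)$ of $e^{P/h}\mathscr{L}_{h,\A}e^{-P/h}$. A direct computation of the remaining terms then yields the normal form
\[
e^{P/h}\mathscr{L}_{h,\A}e^{-P/h}=h\B+4ih\,W\,\partial_{\bar z}-4h^2c\,\partial_z\partial_{\bar z},\qquad c:=e^{-2\eta},\quad W:=e^{-2\eta}\big(\tfrac12(A_1-iA_2)-i\partial_z P\big)=\mathcal{O}(\Vert q\Vert),
\]
where we used $e^{-2\eta}\tilde\B=\B$. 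The equation above determines $P$ only up to an additive holomorphic function, and I would fix that freedom by requiring the transport equations below to be solvable; at second order this is a single algebraic condition that forces the linear part of $W$ to be a multiple of $\bar z$, with coefficient determined by $\alpha,\gamma,b_0,\eta(0)$, and unwinding it produces $\mathrm{Re}(P\circ\phi^{-1})(q)=\tfrac{e^{2\eta(0)}b_0}{2}\big(\tfrac{\sqrt\alpha}{\sqrt\alpha+\sqrt\gamma}q_1^2+\tfrac{\sqrt\gamma}{\sqrt\alpha+\sqrt\gamma}q_2^2\big)+\mathcal{O}(\Vert q\Vert^3)$ --- in particular strictly convex, so that $e^{-P/h}$ is genuinely concentrated at $p^{*}$.

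Next I would insert the Ansatz $e^{-P/h}\sum_j U_{\ell,j}h^j$ with eigenvalue $h\sum_j\mu_{\ell,j}h^j$ into the normal form and match powers of $h$: the $h^1$ level gives $\mu_{\ell,0}=b_0$, and thereafter the hierarchy $\mathcal{T}U_{\ell,k}=4c\,\partial_z\partial_{\bar z}U_{\ell,k-1}+\sum_{i=1}^{k}\mu_{\ell,i}U_{\ell,k-i}$ with $\mathcal{T}:=(\B-b_0)+4iW\partial_{\bar z}$. I would solve this for formal Taylor series at $p^{*}$. The degree--preserving part of $\mathcal{T}$ is $4iW^{(1)}\partial_{\bar z}$, which on each space of homogeneous polynomials has the eigenstructure of a nonzero multiple of the number operator $\bar z\partial_{\bar z}$ --- one--dimensional kernel $\CC z^{d}$ and one--dimensional cokernel --- so $\mathcal{T}U=R$ is solvable precisely when one scalar functional of $R$ vanishes, and then $U$ is unique modulo the kernel. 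Taking $U_{\ell,0}$ a Taylor--series solution of $\mathcal{T}U_{\ell,0}=0$ with leading term $z^{\ell}$ (the solvability conditions at each higher degree being exactly what pins down the higher Taylor coefficients of $P$, the same for all $\ell$), and running the recursion, the solvability condition at order $h^k$ fixes $\mu_{\ell,k}$; a computation recovers $\mu_{\ell,1}=2\ell\,\tfrac{\sqrt{\det H}}{b_0}+\tfrac{(\mathrm{Tr}\,H^{1/2})^2}{2b_0}$. Finally I would use Borel summation to realise $P$ and all the $U_{\ell,j}$ as honest smooth functions on (a possibly smaller) $\Omega$ with the prescribed Taylor expansions; the residuals of the eikonal and transport equations then have vanishing Taylor series at $p^{*}$, i.e.\ are the functions $F_{\ell,j}$ flat at $p^{*}$ in the statement, and truncating the series at order $J$ leaves the additional $\mathcal{O}(h^{J+2})$.

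The step I expect to be the main obstacle is this coupled construction of $P$ and of the amplitudes: proving that the holomorphic part of $P$ can be chosen --- uniquely, and independently of $\ell$ --- so that every solvability condition in $\mathcal{T}U_{\ell,0}=0$ is met and the subsequent transport equations remain solvable with only the scalars $\mu_{\ell,k}$ to be tuned. This rests on a non-degeneracy of the magnetic well (at the quadratic step the decisive quantity is proportional to $\sqrt{\alpha\gamma}\neq0$), and it is where the hypotheses $b_0>0$ and $0<\alpha\le\gamma$ are genuinely used; the technical point is to keep the $\bar z$--degree filtration under control uniformly in the polynomial degree.
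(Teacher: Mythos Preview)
Your overall strategy matches the paper's: pass to isothermal coordinates, factor the eikonal equation so that the phase solves a $\bar\partial$-equation determined up to a holomorphic $g(z)$, rewrite the conjugated operator as $h\mathcal{B}+h\,v\,\partial_{\bar z}-h^2 e^{-2\eta}\Delta$, solve the transport hierarchy in formal Taylor series at $0$, and invoke Borel at the end. The place where your proposal contains a concrete error is in how the holomorphic freedom is fixed.

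You assert that the correct normalisation makes the linear part $W^{(1)}$ a pure multiple of $\bar z$, so that the leading transport operator is proportional to $\bar z\partial_{\bar z}$. This fails as soon as $\alpha\neq\gamma$. With $W^{(1)}=c\bar z$ the cokernel of $\mathcal T_0=4ic\,\bar z\partial_{\bar z}$ on degree-$d$ polynomials is $\CC z^{d}$; at degree $\ell+2$ the equation $\mathcal T_0 u_{\ell+2}=-(\mathcal{B}-b_0)^{(2)}z^\ell$ has a $z^{\ell+2}$-component $-(\alpha-\gamma)/4$ on the right, and no choice of higher $W^{(k)}$ or of the free kernel constants removes it (they all annihilate the holomorphic seeds $z^\ell,z^{\ell+1}$). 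Your own formula for $\mathrm{Re}(P)$ is in fact inconsistent with this choice: $W^{(1)}=c\bar z$ forces the holomorphic quadratic part of $P$ to vanish, which gives $\mathrm{Re}(P)=\tfrac{e^{2\eta(0)}b_0}{4}(q_1^2+q_2^2)+\mathcal O(\|q\|^3)$, not the anisotropic form in the statement.

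What the paper does instead is construct the formal curve $w=w(z)$ along which $\tilde{\mathcal{B}}(z,w)=b_0$ (with $w_1=(\sqrt\gamma-\sqrt\alpha)/(\sqrt\gamma+\sqrt\alpha)$), and then fix the holomorphic part of the phase by the condition that the transport coefficient vanishes along that curve, namely $f'(z)+2\partial_z\tilde\Psi(z,w(z))=0$. After the shift $y=w-w(z)$ the transport operator reads $V(z,y)\partial_y+F(z,y)$ with $V(z,0)=F(z,0)=0$, so $\mathcal{B}-b_0$ is divisible by $y$ and the obstruction disappears. In your language the correct normalisation is $W^{(1)}\propto \bar z-w_1 z$ (a root of a quadratic coming from $(\mathcal{B}-b_0)^{(2)}$), not $w_1=0$; more generally $W$ must vanish to infinite order along $\{\mathcal{B}=b_0\}$ rather than along $\{\bar z=0\}$. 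Once this is corrected, your outline coincides with the paper: Lemma~\ref{ODE} gives the one-dimensional kernel/cokernel mechanism, the first transport equation yields $\mu_{\ell,0}=b_0$, the solvability of the second becomes a one-variable equation in $z$ whose own obstruction fixes $\mu_{\ell,1}$, and the induction proceeds.
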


\begin{corollary}\label{IN TH Exponential Smooth}
  Let $p^{*}\in M$ and assume that the magnetic field satisfies the
  conditions of Theorem \ref{IN TH WKB}. For any $\ell\in \NN$, there
  exist
  \begin{enumerate}[\rm (i)]
  \item a non-negative function
    $\widehat{P}\in \mathscr{C}_{0}^\infty(M)$ ,
  \item a sequence of functions
    $(\widehat{U}_{\ell,j})_{j\in \NN} \subset
    \mathscr{C}_{0}^\infty(M) $ ,
  \end{enumerate}
  and for any $(\varepsilon,J) \in (0,1) \times \NN $, there exist
  $C>0$ and $h_0>0$ such that, for all $h\in (0,h_0)$,
  \begin{equation}
    \Vert e^{\varepsilon \widehat{P} /h}\left(\mathscr{L}_{h,\A}-\lambda^{J}_{h,\ell} \right) \Upsilon_{h,\ell}^{J} \Vert_{\textup{L}^2(M)} \leq Ch^{J+2}\,,
  \end{equation}
  where
  \[\lambda^{J}_{h,\ell}=h\sum_{j=0}^{J}\mu_{\ell,j} h^{j} \qquad
    \text{ and }\qquad \Upsilon_{h,\ell}^{J} = \sum_{j=0}^{J}
    \widehat{U}_{\ell,j} h^{j}\,. \] In particular,
  \begin{equation}\label{IN Eigen inequality}
    \Vert \left(\mathscr{L}_{h,\A}-\lambda^{J}_{h,\ell} \right) \Upsilon^{J}_{h,\ell} \Vert_{\textup{L}^2(M)} \leq Ch^{J+2}\,.
  \end{equation}
\end{corollary}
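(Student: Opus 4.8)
The plan is to deduce the corollary from Theorem 2.1 by localizing the local WKB objects, truncating the (unbounded, generally complex) phase $P$ to produce a genuine compactly supported weight, and controlling the errors that these operations introduce. First I would fix $\ell$ and work in the isothermal chart $(\Omega,\phi)$ given by Theorem 2.1, where we have the phase $P$, the amplitudes $U_{\ell,j}$, the scalars $\mu_{\ell,j}$, and the flat error terms $F_{\ell,j}$. The key point about $P$ is that $\mathrm{Re}(P\circ\phi^{-1})(q)=\tfrac{e^{2\eta(0)}b_0}{2}\bigl(\tfrac{\sqrt\alpha}{\sqrt\alpha+\sqrt\gamma}q_1^2+\tfrac{\sqrt\gamma}{\sqrt\alpha+\sqrt\gamma}q_2^2\bigr)+\mathcal O(\|q\|^3)$ is nonnegative and behaves like a nondegenerate quadratic form near $p^{*}$; hence, shrinking $\Omega$ if necessary, there is a neighbourhood $\Omega'\Subset\Omega$ of $p^{*}$ on which $\mathrm{Re}\,P\geq c\,\mathrm{dist}(\cdot,p^{*})^2$ for some $c>0$.

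Next I would introduce a cutoff: pick $\chi\in\mathscr C_0^\infty(\Omega')$ with $\chi\equiv 1$ near $p^{*}$, and set $\widehat U_{\ell,j}:=\chi\,U_{\ell,j}$ (extended by zero), so $\widehat U_{\ell,j}\in\mathscr C_0^\infty(M)$. For the weight, I would let $\widehat P$ be a nonnegative function in $\mathscr C_0^\infty(M)$ that agrees with $\mathrm{Re}\,P$ on a smaller neighbourhood $\Omega''\Subset\{\chi=1\}$ of $p^{*}$ and satisfies $\widehat P\leq \mathrm{Re}\,P$ everywhere on $\mathrm{supp}\,\chi$ (this is possible because $\mathrm{Re}\,P>0$ away from $p^{*}$ on the compact set $\mathrm{supp}\,\chi\setminus\Omega''$, so one can cap it off while keeping it below $\mathrm{Re}\,P$). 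Then $\Upsilon^J_{h,\ell}=\sum_{j=0}^J\widehat U_{\ell,j}h^j$ and $\lambda^J_{h,\ell}=h\sum_{j=0}^J\mu_{\ell,j}h^j$ are as required. Now write
\[
\bigl(\mathscr L_{h,\A}-\lambda^J_{h,\ell}\bigr)\Upsilon^J_{h,\ell}
= e^{-P/h}\Bigl[e^{P/h}\bigl(\mathscr L_{h,\A}-\lambda^J_{h,\ell}\bigr)\bigl(e^{-P/h}\textstyle\sum_{j=0}^J U_{\ell,j}h^j\bigr)\Bigr]\chi
+ \mathscr R_{h},
\]
where $\mathscr R_h$ collects all the commutator terms coming from letting $\mathscr L_{h,\A}$ hit the cutoff $\chi$. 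On $\{\chi=1\}$ the bracketed expression equals, by Theorem 2.1, $\sum_{j=0}^{J+1}h^jF_{\ell,j}+\mathcal O(h^{J+2})$; since each $F_{\ell,j}$ is smooth and flat at $p^{*}$, on the region where $\widehat P=\mathrm{Re}\,P$ the factor $e^{\varepsilon\widehat P/h}e^{-P/h}$ has modulus $e^{(\varepsilon-1)\mathrm{Re}\,P/h}\leq 1$, so $e^{\varepsilon\widehat P/h}$ times this contribution is $\mathcal O(h^{J+2})$ in $\mathrm L^2$ once we also absorb the flat functions $F_{\ell,j}$: flatness of $F_{\ell,j}$ at $p^{*}$ together with $\widehat P\lesssim\mathrm{dist}(\cdot,p^{*})^2$ gives $e^{\varepsilon\widehat P/h}|F_{\ell,j}|\lesssim e^{C\mathrm{dist}^2/h}\mathrm{dist}^N$, and optimizing in $\mathrm{dist}$ (or simply using $\sup_t e^{Ct^2/h}t^N\lesssim h^{-N/2}\cdot$... no — better: on $\mathrm{supp}\chi$, $\widehat P$ is bounded, so $e^{\varepsilon\widehat P/h}$ is $\mathcal O(1)$ pointwise away from where we need decay, and where $\mathrm{dist}$ is small we use $e^{\varepsilon\widehat P/h}\le e^{C\mathrm{dist}^2/h}$ against the flat factor) yields a bound $o(h^{J+2})$, in fact $\mathcal O(h^{J+2})$, for each fixed $J$. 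The remainder $\mathscr R_h$ is supported in $\{\nabla\chi\neq 0\}$, a compact set bounded away from $p^{*}$ where $\mathrm{Re}\,P\geq c_0>0$; there $e^{\varepsilon\widehat P/h}|e^{-P/h}|\leq e^{\varepsilon\sup\widehat P/h}e^{-c_0/h}$, which is exponentially small provided we have first chosen $\widehat P$ so that $\varepsilon\sup_M\widehat P<c_0$ — and here one uses that $\varepsilon<1$ together with $\widehat P\le\mathrm{Re}\,P$ and a further shrinking of $\mathrm{supp}\chi$, or more cleanly: on $\{\nabla\chi\neq0\}$ we have $\widehat P\le\mathrm{Re}\,P$, hence $e^{\varepsilon\widehat P/h}e^{-\mathrm{Re}\,P/h}\le e^{(\varepsilon-1)\mathrm{Re}\,P/h}\le e^{(\varepsilon-1)c_0/h}$, exponentially small since $\varepsilon-1<0$. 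Collecting, $\|e^{\varepsilon\widehat P/h}(\mathscr L_{h,\A}-\lambda^J_{h,\ell})\Upsilon^J_{h,\ell}\|_{\mathrm L^2(M)}\le Ch^{J+2}$, and \eqref{IN Eigen inequality} follows by dropping the (everywhere $\geq1$) weight $e^{\varepsilon\widehat P/h}$.

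The main obstacle I anticipate is the bookkeeping around the weight: one must arrange $\widehat P$ simultaneously nonnegative, smooth, compactly supported, equal to $\mathrm{Re}\,P$ near $p^{*}$ so that the WKB cancellation survives multiplication by $e^{\varepsilon\widehat P/h}$, and dominated by $\mathrm{Re}\,P$ on the whole support of $\chi$ so that both the commutator remainder $\mathscr R_h$ and the boundary of the good region are exponentially suppressed when $\varepsilon<1$. The mechanism is exactly that $\varepsilon<1$ converts the neutral factor $e^{-\mathrm{Re}\,P/h}$ into a strictly decaying $e^{(\varepsilon-1)\mathrm{Re}\,P/h}$ off the diagonal; making this rigorous just requires choosing the nested neighbourhoods $\Omega''\Subset\{\chi=1\}\Subset\Omega'\Subset\Omega$ and the function $\widehat P$ with some care, plus the elementary estimate that a smooth function flat at $p^{*}$, multiplied by $e^{C\,\mathrm{dist}(\cdot,p^{*})^2/h}$ and integrated, is $O(h^\infty)$ — which follows from $|F_{\ell,j}(x)|\le C_N\mathrm{dist}(x,p^{*})^N$ for every $N$ and a change of variables $x\mapsto x/\sqrt h$ on the relevant shrinking ball.
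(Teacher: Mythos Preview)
Your approach is exactly the paper's: localize with cutoffs, use the coercivity $\mathrm{Re}\,P\ge\delta\,\mathrm{dist}(\cdot,p^{*})^2$ near $p^{*}$, and exploit $\varepsilon<1$ so that $e^{\varepsilon\widehat P/h}e^{-\mathrm{Re}\,P/h}\le e^{(\varepsilon-1)\mathrm{Re}\,P/h}$ makes the commutator remainder exponentially small and, together with flatness and the rescaling $q\mapsto\sqrt h\,q$, absorbs the $h^jF_{\ell,j}$ terms into $\mathcal O(h^{J+2})$. One slip to fix: you define $\widehat U_{\ell,j}:=\chi\,U_{\ell,j}$ and $\Upsilon^J_{h,\ell}=\sum_j\widehat U_{\ell,j}h^j$, but your decomposition actually uses $\Upsilon^J_{h,\ell}=\chi\,e^{-P/h}\sum_j U_{\ell,j}h^j$; the latter is the correct quasimode (compare the radial case, Corollary~2.7), so either write $\Upsilon^J_{h,\ell}$ directly with the factor $e^{-P/h}$ or let $\widehat U_{\ell,j}$ carry it and accept that they depend on $h$, as the paper's notation tacitly does.
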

\begin{remark}
  Corollary \ref{IN TH Exponential Smooth} can be used to prove that
  there is no odd powers of $h^{\frac{1}{2}}$ in the expansion given
  by \cite[Theorem 1.2]{HK11}. Furthermore, Corollary \ref{IN TH
    Exponential Smooth} is a generalization of \cite[Theorem
  2.1]{HK11} in the case $k=0$.
\end{remark}
Let us now turn to the description of the \enquote{true}
eigenfunctions. For each $\ell \in \NN$, let $\Upsilon_{h,\ell}$ be a
normalized eigenfunction associated with
$\lambda_{\ell}(\mathscr{L}_{h,\A})$. We introduce the projection into
the eigenspace of $\lambda_{\ell} (\mathscr{L}_{h,\A})$ :
\begin{align*}
  \Pi_{\ell} : \textup{L}^2(M) &\to \textup{Dom}(\mathscr{L}_{h,\A})\\
  u &\mapsto \Pi_{\ell} u = \left\langle u , \Upsilon_{h,\ell} \right\rangle_{\textup{L}^2(M)} \Upsilon_{h,\ell}\,.
\end{align*}
Using the asymptotic simplicity of the eigenvalues and the spectral
theorem, we get the following corollary.
\begin{corollary}\label{IN TH Eigenfunction appr}
  Assume that the magnetic field satisfies Assumption \ref{Assumption
    on Manifold}.  For all $(J,\ell)\in\NN\times \NN$, there exist
  $C>0$ and $h_0>0$ such that, for all $h\in(0,h_0)$,
  \begin{equation}
    \left\Vert \Upsilon_{h,\ell}^{J} - \Pi_{\ell} \Upsilon_{h,\ell}^{J}\right\Vert_{\textup{L}^2(M)} \leq Ch^{J+1}.
  \end{equation}
\end{corollary}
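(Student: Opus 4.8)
\emph{Approach.} The plan is to run the classical argument that turns a quasimode into an estimate on spectral projectors, feeding in the Helffer--Kordyukov asymptotics \eqref{Eigenvalue Expansion} to control the relevant spectral gap; the one twist is that the quasimode bound of Corollary~\ref{IN TH Exponential Smooth} has to be applied at order $J+1$, not at order $J$, in order to produce the last power of $h$. By \cite[Theorem~1.2]{HK11} and \cite[Theorem~1.7]{HK15}, which give \eqref{Eigenvalue Expansion}, together with the asymptotic simplicity of the low-lying spectrum, for $h$ small the eigenvalue $\lambda_{\ell}(\mathscr{L}_{h,\A})$ is simple, one has $\lambda_{\ell}(\mathscr{L}_{h,\A}) = \B(p_0)h + \mu_{\ell,1}h^2 + o(h^2)$ with $\mu_{\ell,1}$ as in Theorem~\ref{IN TH WKB}, and $\mathrm{dist}\big(\lambda_{\ell}(\mathscr{L}_{h,\A}),\, \sigma(\mathscr{L}_{h,\A})\setminus\{\lambda_{\ell}(\mathscr{L}_{h,\A})\}\big) \geq c_\ell h^2$ for some $c_\ell>0$; the gap is under control because consecutive eigenvalues near the bottom differ by $2\sqrt{\det H}\,\B(p_0)^{-1}h^2+o(h^2)$ with $\det H>0$, and the bottom of $\sigma(\mathscr{L}_{h,\A})$ consists exactly of these eigenvalues. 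Since $\lambda^{J+1}_{h,\ell} = h\sum_{j=0}^{J+1}\mu_{\ell,j}h^j = \B(p_0)h + \mu_{\ell,1}h^2 + \mathcal{O}(h^3)$, we get $|\lambda^{J+1}_{h,\ell} - \lambda_{\ell}(\mathscr{L}_{h,\A})| = o(h^2)$, so for $h$ small the interval $I_h := \big(\lambda^{J+1}_{h,\ell} - \tfrac{c_\ell}{2}h^2,\ \lambda^{J+1}_{h,\ell} + \tfrac{c_\ell}{2}h^2\big)$ contains $\lambda_{\ell}(\mathscr{L}_{h,\A})$ and no other point of $\sigma(\mathscr{L}_{h,\A})$; hence $\Pi_\ell$ coincides with the spectral projector of $\mathscr{L}_{h,\A}$ associated with $I_h$, and $\mathrm{dist}(\lambda^{J+1}_{h,\ell},\RR\setminus I_h) = \tfrac{c_\ell}{2}h^2$.

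\emph{Main step and conclusion.} For $u$ in the domain of $\mathscr{L}_{h,\A}$ the spectral theorem gives $\|(\mathrm{Id}-\Pi_\ell)u\|_{\textup{L}^2(M)} \leq \tfrac{2}{c_\ell h^2}\,\|(\mathscr{L}_{h,\A}-\lambda^{J+1}_{h,\ell})u\|_{\textup{L}^2(M)}$. Taking $u = \Upsilon^{J+1}_{h,\ell}$ and invoking \eqref{IN Eigen inequality} at order $J+1$, namely $\|(\mathscr{L}_{h,\A}-\lambda^{J+1}_{h,\ell})\Upsilon^{J+1}_{h,\ell}\|_{\textup{L}^2(M)} \leq Ch^{J+3}$, yields $\|(\mathrm{Id}-\Pi_\ell)\Upsilon^{J+1}_{h,\ell}\|_{\textup{L}^2(M)} \leq \tfrac{2C}{c_\ell}\,h^{J+1}$. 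Finally, $\Upsilon^{J+1}_{h,\ell} = \Upsilon^{J}_{h,\ell} + h^{J+1}\widehat{U}_{\ell,J+1}$ and $\Upsilon^{J}_{h,\ell} - \Pi_\ell\Upsilon^{J}_{h,\ell} = (\mathrm{Id}-\Pi_\ell)\Upsilon^{J}_{h,\ell}$, so by the triangle inequality and $\|\mathrm{Id}-\Pi_\ell\|\leq1$,
\[ \|\Upsilon^{J}_{h,\ell} - \Pi_\ell\Upsilon^{J}_{h,\ell}\|_{\textup{L}^2(M)} \leq \tfrac{2C}{c_\ell}\,h^{J+1} + h^{J+1}\|\widehat{U}_{\ell,J+1}\|_{\textup{L}^2(M)} , \]
and $\|\widehat{U}_{\ell,J+1}\|_{\textup{L}^2(M)}$ is bounded uniformly in $h$ (immediate from the construction in Corollary~\ref{IN TH Exponential Smooth}, the Gaussian-type weight $e^{-\widehat{P}/h}$ entering the $\widehat{U}_{\ell,j}$ being $\leq1$ since $\widehat{P}\geq0$), so the right-hand side is $\mathcal{O}(h^{J+1})$, which is the asserted estimate.

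\emph{Main obstacle.} The delicate point is the first paragraph: one must be sure that a window of width $\sim h^2$ around $\lambda^{J+1}_{h,\ell}$ isolates exactly the single eigenvalue $\lambda_{\ell}(\mathscr{L}_{h,\A})$, which is precisely where the full content of \cite{HK11,HK15} enters — not just the two-term expansion \eqref{Eigenvalue Expansion}, but also the simplicity of the low-lying eigenvalues and the fact that no other spectrum of $\mathscr{L}_{h,\A}$ intrudes near $\B(p_0)h$. Note also that applying the spectral estimate directly at order $J$ would only produce $\mathcal{O}(h^{J})$; the extra power is bought by working at order $J+1$ and absorbing the correction $\Upsilon^{J+1}_{h,\ell}-\Upsilon^{J}_{h,\ell}=\mathcal{O}(h^{J+1})$.
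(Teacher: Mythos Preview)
Your argument is correct and follows exactly the approach the paper indicates (``using the asymptotic simplicity of the eigenvalues and the spectral theorem''); the paper gives no further details, and you have supplied the standard ones, including the necessary trick of applying \eqref{IN Eigen inequality} at order $J+1$ to squeeze out the extra power of $h$ after dividing by the $h^2$ spectral gap.

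One small remark on your parenthetical: as written in Corollary~\ref{IN TH Exponential Smooth}, the functions $\widehat{U}_{\ell,j}\in\mathscr{C}^\infty_0(M)$ are stated as $h$-independent, in which case $\|\widehat{U}_{\ell,J+1}\|_{\textup{L}^2(M)}$ is trivially a fixed constant and no reference to $e^{-\widehat{P}/h}$ is needed. If instead one interprets the quasimode as carrying the complex phase $e^{-P/h}$ from Theorem~\ref{IN TH WKB} (as in the radial analogue, Corollary~\ref{IN TH Ex R}), the relevant bound is $|e^{-P/h}|=e^{-\mathrm{Re}(P)/h}\leq 1$ near $p^*$; note that $\widehat{P}$ in Corollary~\ref{IN TH Exponential Smooth} is a real nonnegative \emph{weight}, not the WKB phase $P$ itself. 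Either reading gives $\|\Upsilon^{J+1}_{h,\ell}-\Upsilon^{J}_{h,\ell}\|_{\textup{L}^2(M)}=\mathcal{O}(h^{J+1})$, so your conclusion stands.
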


\subsection{Radial magnetic fields on $\RR^2$}\label{R2}
Let us now describe our results when $M=\RR^2$ and when the magnetic
field is radial.
\begin{assumption}\label{IN Assumption beta}
  We assume that the magnetic field $\B$ has the form
  \begin{equation*}
    \B(q_1,q_2) = \beta\left(\frac{q_1^2+q_2^2}{2}\right)\,,
  \end{equation*}
  where $\beta : \RR \rightarrow \RR^{+} $ is a smooth function such
  that
  \begin{equation}\label{QT EQ Condition Of Beta}
    \beta(r)>\beta(0)\,,  \qquad \text{ for all } r>0 \,.
  \end{equation}
  \begin{equation}\label{IN EQ Positive}
    \beta'(0)>0 \,.
  \end{equation}

\end{assumption}

In this case, the WKB analysis becomes quite explicit.
\begin{theorem}\label{IN TH WKB R}
  For all $m\in \NN$, there exist
  \begin{enumerate}[\rm (i)]
  \item a smooth positive function $\varphi$ defined on $[0,+\infty)$,
    \begin{equation}
      \varphi(\rho) := \frac{1}{2} \int_{0}^{\rho} \int_{0}^{1}  \beta(\xi \tau)\, \dd \xi \dd \tau\,,
    \end{equation}
  \item a sequence of smooth real-valued functions
    $(a_{m,j})_{j\in \NN}$ defined on $[0,\infty)$, with $a_{m,0}>0$,
  \item a sequence of real numbers $(\mu_{m,j})_{j\in N} $ with
    \[ \mu_{m,0}=\beta(0),\qquad \mu_{m,1}= \left(2m \frac{\sqrt{\det
            H}}{b_0} + \frac{( \mathrm{Tr}\, H^{\frac{1}{2}})^2}{2b_0}
      \right)\,,\quad H=\frac{1}{2} \textup{Hess} \B(0)\,.\]
  \end{enumerate}
  We let
  \begin{equation*}
    P(q) = \varphi\left( \frac{\Vert q \Vert^2}{2} \right)\,,\quad U_{m,j}(q)= a_{m,j} \left( \frac{\Vert q \Vert^2}{2} \right)\,,\quad \theta(q)=\arg(q_1+iq_2)\,.
  \end{equation*}
  Then, for all $J\in \NN$,
  \begin{eqnarray*}
    e^{P/h}\left( \frac{\Vert q\Vert^2}{2}\right)^{\frac{-m}{2}}\left(\mathscr{L}_{h,\A} -h\sum_{j=0}^{J}\mu_{m,j}h^{j} \right)\left(e^{im\theta(q)}\left( \frac{\Vert q\Vert^2}{2}\right)^{\frac{m}{2}}e^{-P/h}\sum_{j=0}^{J} U_{m,j} h^{j}\right)\\
    = \mathcal{O}(h^{J+2})\,, 
  \end{eqnarray*}
  locally uniformly in $\RR^2$.
\end{theorem}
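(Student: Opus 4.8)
The plan is to exploit the rotational symmetry, in the radial gauge, in order to reduce the construction to a hierarchy of singular first‑order ODEs on $[0,+\infty)$ in the variable $\rho=\tfrac12\|q\|^{2}$. In polar coordinates $(r,\theta)$, fix the gauge $\A=a(r)\,\dd\theta$ with $a(r)=A(\rho)$ and $A(\rho):=\int_{0}^{\rho}\beta(s)\,\dd s$; then $\dd\A=a'(r)\,\dd r\wedge\dd\theta=r\beta(r^{2}/2)\,\dd r\wedge\dd\theta=\B\,\dd\V_{g}$, and since $a(r)=\tfrac12\beta(0)r^{2}+\mathcal{O}(r^{4})$ the form $\A=\tfrac{a(r)}{r^{2}}(q_{1}\,\dd q_{2}-q_{2}\,\dd q_{1})$ is smooth at the origin. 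In this gauge $\mathscr{L}_{h,\A}$ commutes with rotations and acts on $e^{im\theta}v(r)$ as $e^{im\theta}\mathscr{L}_{m}v$, with $\mathscr{L}_{m}=-h^{2}(\partial_{r}^{2}+\tfrac1r\partial_{r})+r^{-2}(hm-a(r))^{2}$. Since $e^{im\theta}\rho^{m/2}=2^{-m/2}(q_{1}+iq_{2})^{m}$ is an honest polynomial and $\varphi$ and all the $a_{m,j},\mu_{m,j}$ are radial, the asserted estimate reduces to $\rho^{-m/2}e^{\varphi/h}\big(\mathscr{L}_{m}-h\sum_{j}\mu_{m,j}h^{j}\big)\big(\rho^{m/2}e^{-\varphi/h}\sum_{j}a_{m,j}(\rho)h^{j}\big)=\mathcal{O}(h^{J+2})$.

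The computational core is to pass to $\rho=r^{2}/2$, under which $\partial_{r}^{2}+\tfrac1r\partial_{r}=2\rho\partial_{\rho}^{2}+2\partial_{\rho}$ and hence $\mathscr{L}_{m}=-h^{2}(2\rho\partial_{\rho}^{2}+2\partial_{\rho})+(2\rho)^{-1}(hm-A(\rho))^{2}$, and then to conjugate by $\Phi:=\rho^{m/2}e^{-\varphi/h}$. A direct expansion gives
\[
\Phi^{-1}\mathscr{L}_{m}\Phi=\Big(\frac{A(\rho)^{2}}{2\rho}-2\rho\,\varphi'(\rho)^{2}\Big)+h\,\mathcal{R}_{\varphi}+h^{2}\big(-2\rho\partial_{\rho}^{2}-2(m+1)\partial_{\rho}\big),
\]
where $\mathcal{R}_{\varphi}$ is a first‑order operator depending on $\varphi$. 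The $\mathcal{O}(h^{0})$ coefficient vanishes precisely when $\varphi'(\rho)=A(\rho)/(2\rho)$, equivalently $\varphi'(\rho)=\tfrac12\int_{0}^{1}\beta(\xi\rho)\,\dd\xi$, which is exactly satisfied by $\varphi(\rho)=\tfrac12\int_{0}^{\rho}\int_{0}^{1}\beta(\xi\tau)\,\dd\xi\,\dd\tau$; this $\varphi$ is smooth on $[0,+\infty)$, with $\varphi(0)=0$, $\varphi'(0)=\tfrac12\beta(0)>0$, and $\varphi>0$ on $(0,+\infty)$. With this choice, using $A'=\beta$ and $\rho\varphi'=A/2$, one finds $\mathcal{R}_{\varphi}=\mathcal{T}_{0}:=\beta(\rho)+2A(\rho)\partial_{\rho}$. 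Thus one is reduced to solving, by functions smooth on a neighbourhood of $0$ in $[0,+\infty)$, the transport hierarchy $(\mathcal{T}_{0}-\mu_{m,0})a_{m,0}=0$ and, for $k\ge1$,
\[
(\mathcal{T}_{0}-\mu_{m,0})a_{m,k}=-\mathcal{T}_{1}a_{m,k-1}+\sum_{j=1}^{k}\mu_{m,j}a_{m,k-j},\qquad \mathcal{T}_{1}:=-2\rho\partial_{\rho}^{2}-2(m+1)\partial_{\rho}.
\]

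For $\rho>0$ each of these is a regular linear ODE since $A(\rho)>0$, and the only delicate point — the main obstacle — is smoothness at the degenerate point $\rho=0$. The homogeneous solution $a_{m,0}(\rho)=a_{m,0}(0)\exp\!\big(\int_{0}^{\rho}\tfrac{\beta(0)-\beta(s)}{2A(s)}\,\dd s\big)$ is smooth and positive only for $\mu_{m,0}=\beta(0)=:b_{0}$, for only then is the integrand bounded near $0$ (one has $\beta(0)-\beta(\rho)=-\beta'(0)\rho+\mathcal{O}(\rho^{2})$ and $A(\rho)=\beta(0)\rho+\mathcal{O}(\rho^{2})$); one then fixes $a_{m,0}(0)>0$. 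At order $k\ge1$, the variation‑of‑constants solution is smooth at $\rho=0$ if and only if the right‑hand side vanishes there; since $(\mathcal{T}_{1}a)(0)=-2(m+1)a'(0)$, this reads $\mu_{m,k}\,a_{m,0}(0)=-2(m+1)a_{m,k-1}'(0)-\sum_{j=1}^{k-1}\mu_{m,j}a_{m,k-j}(0)$ and determines $\mu_{m,k}$ uniquely; the residual freedom $a_{m,k}\mapsto a_{m,k}+c\,a_{m,0}$ is removed by imposing, say, $a_{m,k}(0)=0$ for $k\ge1$. In particular $a_{m,0}'(0)/a_{m,0}(0)=-\beta'(0)/(2b_{0})$ yields $\mu_{m,1}=(m+1)\beta'(0)/b_{0}$, which coincides with $2m\sqrt{\det H}/b_{0}+(\mathrm{Tr}\,H^{1/2})^{2}/(2b_{0})$ since $\mathrm{Hess}\,\B(0)=\beta'(0)I_{2}$, whence $H=\tfrac12\beta'(0)I_{2}$, $\sqrt{\det H}=\tfrac12\beta'(0)$ and $(\mathrm{Tr}\,H^{1/2})^{2}=2\beta'(0)$.

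Finally, feeding $\sum_{j=0}^{J}a_{m,j}h^{j}$ and $\sum_{j=0}^{J}\mu_{m,j}h^{j}$ into $h\mathcal{T}_{0}+h^{2}\mathcal{T}_{1}$ and collecting powers of $h$: the coefficients of $h^{1},\dots,h^{J+1}$ vanish by construction of the hierarchy, while those of $h^{J+2},\dots,h^{2J+1}$ are fixed smooth functions of $\rho$, so the conjugated quantity is $\mathcal{O}(h^{J+2})$ locally uniformly. Undoing the conjugation — the exponentials $e^{\pm\varphi/h}$ cancel, and the remaining factor $\rho^{-m/2}e^{im\theta}$ multiplies a function equal to $r^{m}$ times a smooth radial function, hence of locally bounded modulus (this uses that $\mathscr{L}_{m}$ maps $r^{m}\cdot(\text{smooth radial})$ into itself, the singular $h^{2}m^{2}/r^{2}$ term being cancelled) — gives the claimed bound $\mathcal{O}(h^{J+2})$, locally uniformly in $\RR^{2}$. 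The one genuinely nontrivial ingredient is the analysis at $\rho=0$: the transport operators degenerate there, and the construction closes precisely because the forced solvability condition at $\rho=0$ pins down each $\mu_{m,j}$; the rest is bookkeeping.
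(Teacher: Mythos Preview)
Your proposal is correct and follows essentially the same route as the paper: pass to polar coordinates in the radial gauge, reduce to the one-dimensional operator in $\rho=r^{2}/2$, conjugate by $\rho^{m/2}e^{-\varphi/h}$, solve the eikonal equation to get $\varphi'=A/(2\rho)$, and then solve the transport hierarchy $(\mathcal{T}_0-\mu_{m,0})a_{m,0}=0$, etc., where the solvability condition at the singular point $\rho=0$ fixes each $\mu_{m,j}$. Your transport operators $\mathcal{T}_0=2A(\rho)\partial_\rho+\beta(\rho)$ and $\mathcal{T}_1=-2\rho\partial_\rho^2-2(m+1)\partial_\rho$ coincide with the paper's $\mathcal{N}^1$ and $\mathcal{N}^2_m$, and your explicit verification that $\mu_{m,1}=(m+1)\beta'(0)/b_0$ matches the Hessian formula (via $H=\tfrac12\beta'(0)I_2$) and your remark that $e^{im\theta}\rho^{m/2}=2^{-m/2}(q_1+iq_2)^m$ is a genuine polynomial are welcome clarifications that the paper leaves implicit.
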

Let $K>0$. We consider a smooth cut-off function such that
\begin{equation}\label{IN EQ Cut-off}
  \chi(\rho) = \left\{\begin{aligned}
      &1 \qquad \text{ on } [0,K]\\
      &0 \qquad \text{ on } [K+1, +\infty)
    \end{aligned} \right.\,.
\end{equation}
\begin{corollary}\label{IN TH Ex R}
  For all $(\varepsilon,m,J)\in (0,1)\times \NN \times \NN$, there
  exist a constant $C>0$ and $h_0>0$ such that, for all
  $h\in (0,h_0)$,
  \begin{equation}
    \Vert e^{\varepsilon P /h}\left(\mathscr{L}_{h,\A}-\lambda^{J}_{h,m} \right) \Upsilon_{h,m}^{J} \Vert_{\textup{L}^2(\RR^2)} \leq Ch^{J+2}\,,
  \end{equation}
  where
  \begin{align*}
    & \lambda^{J}_{h,m}:=h\sum_{j=0}^{J}\mu_{m,j} h^{j} \,,\\
    & \Upsilon_{h,m}^{J} = \chi\left( \frac{\Vert \cdot \Vert^2}{2} \right) e^{im\theta(q)}\left( \frac{\Vert q\Vert^2}{2}\right)^{\frac{m}{2}}e^{-P/h}\sum_{j=0}^{J} U_{m,j} h^{j}\,.
  \end{align*}
  In particular,
  \begin{equation}
    \Vert \left(\mathscr{L}_{h,\A}-\lambda^{J}_{h,m} \right) \Upsilon^{J}_{h,m} \Vert_{\textup{L}^2(\RR^2)} \leq Ch^{J+2}\,.
  \end{equation}
\end{corollary}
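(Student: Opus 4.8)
The plan is to deduce the global weighted estimate from the local construction of Theorem~\ref{IN TH WKB R} by localizing with the cut-off $\chi$: multiplying the WKB quasimode by $\chi$ produces an extra error that is supported in the annulus $\mathcal{A}:=\{q\in\RR^2 : K\le \Vert q\Vert^2/2\le K+1\}$, and on $\mathcal{A}$ the weight $e^{\varepsilon P/h}$ is exponentially dominated by the decay $e^{-P/h}$ already built into the quasimode.

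First I would note that $\Upsilon_{h,m}^{J}$ is an admissible quasimode: since $e^{im\theta(q)}\big(\Vert q\Vert^2/2\big)^{m/2}=2^{-m/2}(q_1+iq_2)^{m}$ is a polynomial and $\varphi$, the $a_{m,j}$ and $\chi$ are smooth, we have $\Upsilon_{h,m}^{J}\in\mathscr{C}_0^\infty(\RR^2)\subset\textup{Dom}(\mathscr{L}_{h,\A})$. Writing $\Upsilon_{h,m}^{J}=\chi\,W_h$ with $W_h=e^{-P/h}V_h$, where $V_h:=e^{im\theta(q)}\big(\Vert q\Vert^2/2\big)^{m/2}\sum_{j=0}^{J}U_{m,j}h^{j}$, I would split
\[
 \big(\mathscr{L}_{h,\A}-\lambda_{h,m}^{J}\big)\Upsilon_{h,m}^{J}=\chi\,\big(\mathscr{L}_{h,\A}-\lambda_{h,m}^{J}\big)W_h+[\mathscr{L}_{h,\A},\chi]\,W_h
\]
and estimate the two terms separately.

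For the first term, since $\textup{supp}\,\chi$ is a fixed compact set, Theorem~\ref{IN TH WKB R} gives $\big(\mathscr{L}_{h,\A}-\lambda_{h,m}^{J}\big)W_h=e^{-P/h}\,r_h$ with $\Vert r_h\Vert_{L^\infty(\textup{supp}\,\chi)}=\mathcal{O}(h^{J+2})$. Multiplying by $e^{\varepsilon P/h}$ produces the factor $e^{(\varepsilon-1)P/h}$, which is $\le 1$ because $P=\varphi(\Vert q\Vert^2/2)\ge 0$ (here $\varphi(0)=0$ and $\varphi'(\rho)=\tfrac12\int_0^1\beta(\rho\tau)\,\dd\tau>0$ on $(0,+\infty)$ by \eqref{QT EQ Condition Of Beta}) and $\varepsilon<1$. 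Hence $e^{\varepsilon P/h}\chi\big(\mathscr{L}_{h,\A}-\lambda_{h,m}^{J}\big)W_h$ is $\mathcal{O}(h^{J+2})$ in $L^\infty$ and supported in a fixed compact set, so it is $\mathcal{O}(h^{J+2})$ in $L^2(\RR^2)$.

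For the commutator term, a direct computation gives $[\mathscr{L}_{h,\A},\chi]=-ih\,R_h$, where $R_h$ is a first-order semiclassical differential operator (in $-ih\nabla$) with smooth, $h$-independent coefficients supported in $\textup{supp}\,\nabla\chi\subset\mathcal{A}$. On $\mathcal{A}$ we are away from the origin, so $V_h$ is bounded in $\mathscr{C}^1(\mathcal{A})$ uniformly in $h\in(0,1)$ and $(-ih\nabla-\A)W_h=e^{-P/h}\big(i(\nabla P)V_h-ih\nabla V_h-\A V_h\big)=e^{-P/h}\,\mathcal{O}(1)$ on $\mathcal{A}$; hence $R_hW_h=e^{-P/h}\mathcal{O}(1)$ and $[\mathscr{L}_{h,\A},\chi]W_h=h\,e^{-P/h}\,\mathcal{O}(1)$ there. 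Multiplying by $e^{\varepsilon P/h}$ and using $P\ge\varphi(K)=:c_K>0$ on $\mathcal{A}$ gives the bound $C\,h\,e^{-(1-\varepsilon)c_K/h}=\mathcal{O}(h^{\infty})$ in $L^2(\RR^2)$. Adding the two contributions proves the weighted estimate, and the unweighted inequality follows at once from $e^{\varepsilon P/h}\ge 1$. The only point that requires care is the bookkeeping in the commutator: one must check that after cancellation of the principal part the operator $R_h$ has $h$-independent coefficients and that exactly one power of $h$ is factored out, so that the $\mathcal{O}(1)$ on $\mathcal{A}$ is uniform in $h$; the rest is an immediate consequence of Theorem~\ref{IN TH WKB R} and of $P\ge 0$.
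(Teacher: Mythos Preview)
Your argument is correct and is precisely the standard cut-off argument one expects here; the paper does not write out a proof of this corollary (it is stated without proof, the implicit route being to prove the one-dimensional analogue Corollary~\ref{QT TH Ex R} for $\mathcal{N}_{h,m}$ by the same mechanism and then pull back through polar coordinates and $\rho=r^2/2$). Your direct two-dimensional version is equivalent. One small imprecision: in $R_h=2\nabla\chi\cdot(-ih\nabla-\A)-ih\Delta\chi$ the zeroth-order coefficient carries an $h\Delta\chi$ piece, so the coefficients are not literally $h$-independent, but they are uniformly bounded in $h$, which is all you use.
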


Let $\Upsilon_{h,m}$ be an eigenfunction associated with
$\lambda_{m}(\mathscr{L}_{h,\A})$. We introduce the projection into
the eigenspace of $\lambda_{m}(\mathscr{L}_{h,\A})$
\begin{align*}
  \Pi_{m} : \textup{L}^2(\RR^2) &\to \textup{Dom}(\mathscr{L}_{h,\A})\\
  u &\mapsto \Pi_{m} u = \langle u, \Upsilon_{h,m} \rangle_{\textup{L}^2(\RR^2)} \Upsilon_{h,m}\,.
\end{align*}
As in Corollary \ref{IN TH Eigenfunction appr}, we get an
approximation of the eigenfunctions.

\begin{corollary}\label{IN TH Ei Ap R}
  For all $(J,m)\in\NN\times \NN$, there exist $C>0$ and $h_0>0$ such
  that, for all $h\in(0,h_0)$,
  \begin{equation}
    \left\Vert \Upsilon_{h,m}^{J} - \Pi_{m} \Upsilon_{h,m}^{J}\right\Vert_{\textup{L}^2(\RR^2)} \leq Ch^{J+1}.
  \end{equation}
\end{corollary}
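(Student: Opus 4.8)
The plan is to follow the standard spectral-gap argument, using Corollary~\ref{IN TH Ex R} as the quasimode input. By Corollary~\ref{IN TH Ex R}, for the chosen $J$ (and in particular for $J$, but also invoking it with a larger index if needed), we have $\Vert(\mathscr{L}_{h,\A}-\lambda^{J}_{h,m})\Upsilon^{J}_{h,m}\Vert_{\textup{L}^2(\RR^2)}\leq Ch^{J+2}$, while a direct computation of the Gaussian-type integral $\int_{\RR^2} e^{-2\mathrm{Re}(P)/h}(\Vert q\Vert^2/2)^{m}|U_{m,0}(\Vert q\Vert^2/2)|^2\,\dd q$ shows that $\Vert\Upsilon^{J}_{h,m}\Vert_{\textup{L}^2(\RR^2)}\sim c_m h^{(m+1)/2}$ for some $c_m>0$ as $h\to0$; in any case it is bounded below by $c h^{N}$ for some fixed $N$ independent of $J$. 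Here one uses that the cut-off $\chi$ equals $1$ near $0$ and that $\mathrm{Re}(P)$ has a nondegenerate minimum at the origin (from Theorem~\ref{IN TH WKB R}, $\varphi(\rho)=\tfrac{\beta(0)}{2}\rho+\mathcal{O}(\rho^2)$ is positive and increasing on $(0,\infty)$ by \eqref{QT EQ Condition Of Beta}), so the mass concentrates in the region where $\chi\equiv1$ and the cut-off introduces only an exponentially small error.

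Next I would invoke the spectral theorem. Since $\Upsilon^{J}_{h,m}$ is an $\mathcal{O}(h^{J+2})$-quasimode for $\mathscr{L}_{h,\A}$ at the energy $\lambda^{J}_{h,m}=h\mu_{m,0}+\mathcal{O}(h^2)=h\beta(0)+\mathcal{O}(h^2)$, the spectral theorem gives $\mathrm{dist}\bigl(\lambda^{J}_{h,m},\mathrm{sp}(\mathscr{L}_{h,\A})\bigr)\leq Ch^{J+2}/\Vert\Upsilon^{J}_{h,m}\Vert\leq C'h^{J+2-N}$. More importantly, if $\mathbf{1}_{I}(\mathscr{L}_{h,\A})$ denotes the spectral projection onto an interval $I$ around $\lambda^{J}_{h,m}$ of size comparable to the spectral gap near $\lambda_m(\mathscr{L}_{h,\A})$, then $\Vert(\mathrm{Id}-\mathbf{1}_I(\mathscr{L}_{h,\A}))\Upsilon^J_{h,m}\Vert\leq Ch^{J+2}/\mathrm{gap}\cdot$ (something). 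Here is where the asymptotic expansion \eqref{Eigenvalue Expansion} is essential: the eigenvalues $\lambda_\ell(\mathscr{L}_{h,\A})$ in the bottom of the spectrum are separated by a distance $\sim 2\frac{\sqrt{\det H}}{\beta(0)}h^2$ (the $\mu_{\ell,1}$ are distinct for distinct $\ell$ since $\sqrt{\det H}>0$), so for $h$ small enough the interval $I$ can be chosen so that $\mathbf{1}_I(\mathscr{L}_{h,\A})=\Pi_m$, the rank-one projection onto the $\lambda_m$-eigenspace. Combining, $\Vert\Upsilon^J_{h,m}-\Pi_m\Upsilon^J_{h,m}\Vert\leq C h^{J+2}/(c\,h^{2}\,\Vert\Upsilon^J_{h,m}\Vert^{-1}\cdot\Vert\Upsilon^J_{h,m}\Vert)$... — more carefully, the standard estimate $\Vert(\mathrm{Id}-\Pi_m)u\Vert\leq \mathrm{gap}^{-1}\Vert(\mathscr{L}_{h,\A}-\lambda_m)u\Vert$ applied to $u=\Upsilon^J_{h,m}$, together with $|\lambda_m-\lambda^J_{h,m}|\leq Ch^{J+2}/\Vert\Upsilon^J_{h,m}\Vert$, yields $\Vert(\mathrm{Id}-\Pi_m)\Upsilon^J_{h,m}\Vert\leq C h^{J+2}/(ch^2)\cdot(1+\Vert\Upsilon^J_{h,m}\Vert^{-1}\Vert\Upsilon^J_{h,m}\Vert)\leq C'h^{J}\Vert\Upsilon^J_{h,m}\Vert/h^2$; using $\Vert\Upsilon^J_{h,m}\Vert\gtrsim h^{N}$ does not immediately give $h^{J+1}$, so one must be a little more careful.

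The clean way, which I would actually carry out, is: apply Corollary~\ref{IN TH Ex R} \emph{with $J$ replaced by $J'=J+N+1$ or so}, i.e.\ build a more accurate quasimode $\Upsilon^{J'}_{h,m}$; note $\Upsilon^{J}_{h,m}-\Upsilon^{J'}_{h,m}=\mathcal{O}(h^{J+1})$ in $\textup{L}^2$ (the difference is $\chi\cdot e^{im\theta}(\Vert q\Vert^2/2)^{m/2}e^{-P/h}\sum_{j=J+1}^{J'}U_{m,j}h^j$, whose norm is $\lesssim h^{J+1}\cdot h^{(m+1)/2}\leq h^{J+1}$); then $\Vert(\mathscr{L}_{h,\A}-\lambda^{J'}_{h,m})\Upsilon^{J'}_{h,m}\Vert\leq Ch^{J'+2}$ with $\Vert\Upsilon^{J'}_{h,m}\Vert\gtrsim h^{(m+1)/2}$ gives $\Vert(\mathrm{Id}-\Pi_m)\Upsilon^{J'}_{h,m}\Vert\leq \frac{C h^{J'+2}}{\text{gap}}\lesssim \frac{h^{J'+2}}{h^2}=h^{J'}\ll h^{J+1}$ once $J'\geq J+1$, and adding back the $\mathcal{O}(h^{J+1})$ difference (for both $\Upsilon$ and $\Pi_m\Upsilon$, since $\Vert\Pi_m\Vert\leq1$) finishes it. The main obstacle is bookkeeping the $h$-powers correctly — in particular establishing the lower bound $\Vert\Upsilon^{J}_{h,m}\Vert_{\textup{L}^2}\gtrsim h^{(m+1)/2}$ via the Laplace/stationary-phase asymptotics of the Gaussian integral (controlling the cut-off and higher-order terms in $P$ and in $U_{m,0}$), and making sure the spectral gap from \eqref{Eigenvalue Expansion}, which is only $o(h^2)$-accurate, is genuinely of size $\gtrsim h^2$ so that the rank-one projection $\Pi_m$ is correctly isolated; both are routine but must be done with care. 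The argument is, word for word, the one used for Corollary~\ref{IN TH Eigenfunction appr} in the general case, so I would simply remark ``the proof is identical to that of Corollary~\ref{IN TH Eigenfunction appr}'' after recording the mass lower bound.
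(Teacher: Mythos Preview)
Your proposal is correct and matches the paper's approach: the paper gives no explicit proof here, simply referring back to Corollary~\ref{IN TH Eigenfunction appr}, which is itself obtained ``using the asymptotic simplicity of the eigenvalues and the spectral theorem''---precisely the spectral-gap argument you carry out, including the pass to a higher-order quasimode $\Upsilon^{J'}_{h,m}$ with $J'\geq J+1$ to recover the exact power $h^{J+1}$. One minor simplification: the lower bound on $\Vert\Upsilon^{J}_{h,m}\Vert_{\textup{L}^2}$ you worry about is not actually needed, since the inequality $\Vert(\mathrm{Id}-\Pi_m)u\Vert\leq d^{-1}\Vert(\mathscr{L}_{h,\A}-\lambda^{J'}_{h,m})u\Vert$ with $d=\mathrm{dist}\bigl(\lambda^{J'}_{h,m},\mathrm{sp}(\mathscr{L}_{h,\A})\setminus\{\lambda_m\}\bigr)\geq ch^2$ already yields $O(h^{J'})$ for $u=\Upsilon^{J'}_{h,m}$ without ever dividing by the quasimode norm.
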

By using an Agmon estimates, we can prove that the eigenfunctions of
the magnetic Laplacian decay exponentially.
\begin{theorem}\label{IN TH L2}
  Let $U_{h,m}$ be an eigenfunction associated with
  $\lambda_{m}(\mathscr{L}_{h,\A})$. Then, for all
  $\varepsilon\in(0,1)$, there exist $C>0$ and $h_0>0$ such that, for
  all $h\in (0,h_0)$,
  \[ \Vert e^{\varepsilon P/h} U_{h,m} \Vert_{\textup{L}^2(\RR^2)}
    \leq C\Vert U_{h,m} \Vert_{\textup{L}^2(\RR^2)}\,.\]
\end{theorem}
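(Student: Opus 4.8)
Here is how I would approach Theorem \ref{IN TH L2}.

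The plan is to reduce to a single angular sector and perform there a one‑dimensional Agmon estimate. Since $\B$ is radial, I work in the rotationally invariant gauge $\A = \varphi'\!\left(\tfrac{\norm{q}^2}{2}\right)(-q_2\,\dd q_1 + q_1\,\dd q_2)$, which indeed satisfies $\dd\A = \B\,\dd q_1\wedge\dd q_2$. Then $\mathscr{L}_{h,\A}$ commutes with rotations and decomposes as $\mathscr{L}_{h,\A} = \bigoplus_{m\in\ZZ}\mathfrak{L}_{h}^{(m)}$ along $\mathrm{L}^2(\RR^2) = \bigoplus_{m\in\ZZ}\{e^{im\theta}f(r)\}$, where, setting $a(r) := r^2\varphi'\!\left(\tfrac{r^2}{2}\right) = \int_0^{r^2/2}\beta$,
\[\mathfrak{L}_{h}^{(m)} = -h^2\Big(\partial_r^2 + \tfrac1r\partial_r\Big) + \frac{(mh - a(r))^2}{r^2}\qquad\text{on }\ \mathrm{L}^2\big((0,+\infty),\,r\,\dd r\big)\,,\]
with the natural boundary condition at $r=0$. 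By Assumption \ref{IN Assumption beta}, $a(r)\ge\tfrac{\beta(0)}{2}r^2$ for all $r$ and $a(r)\sim\tfrac{\beta(0)}{2}r^2$ as $r\to 0$. Using the spectral analysis behind \eqref{Eigenvalue Expansion} and the asymptotic simplicity of the low‑lying eigenvalues, for $h$ small (depending on $m$) the number $\lambda_m(\mathscr{L}_{h,\A})$ is the bottom of the spectrum of $\mathfrak{L}_{h}^{(m)}$ and $U_{h,m}(q) = e^{im\theta}f(r)$, with $f > 0$ the corresponding radial ground state — smooth on $(0,+\infty)$, $\sim r^{|m|}$ near $0$, with exponential decay at infinity. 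Writing $p(r) := \varphi\!\left(\tfrac{r^2}{2}\right)$ (so that $e^{\varepsilon P/h}U_{h,m}$ has radial profile $e^{\varepsilon p/h}f$), the statement reduces to $\norm{e^{\varepsilon p/h}f}_{\mathrm{L}^2(r\,\dd r)}\le C\norm{f}_{\mathrm{L}^2(r\,\dd r)}$.

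I would then use the standard one‑dimensional Agmon identity for $\mathfrak{L}_{h}^{(m)}$: for a real, bounded, Lipschitz weight $\Psi$ on $(0,+\infty)$ and $w := e^{\Psi/h}f$ (the term $\tfrac1r\partial_r$ being symmetric for $r\,\dd r$, and the boundary terms at $0$ and $+\infty$ vanishing thanks to the behavior of $f$),
\[h^2\int_0^{+\infty}|w'|^2\,r\,\dd r + \int_0^{+\infty}\Big(\frac{(mh - a(r))^2}{r^2} - |\Psi'(r)|^2 - \lambda_m\Big)|w|^2\,r\,\dd r = 0\,.\]
Choosing $\Psi = \varepsilon p$ gives $\Psi'(r) = \varepsilon r\varphi'\!\left(\tfrac{r^2}{2}\right) = \varepsilon\,a(r)/r$, so the bracket equals
\[B_{m,h}(r) := \frac{\big(mh - (1+\varepsilon)a(r)\big)\big(mh - (1-\varepsilon)a(r)\big)}{r^2} - \lambda_m\,.\]
Since $\lambda_m = \beta(0)h + \mathcal O(h^2)$ and $a(r)\sim\tfrac{\beta(0)}{2}r^2$ near $0$, a short elementary discussion of $B_{m,h}$ produces a fixed small $r_0 > 0$ and constants $C_m, c_m, c_0 > 0$ with: $\{B_{m,h} < 0\}\subset[0,C_m\sqrt h]$; $B_{m,h}(r)\ge c_m r^2$ on $[C_m\sqrt h, r_0]$; and $B_{m,h}(r)\ge c_0$ on $[r_0,+\infty)$ (on the last interval one uses $a(r)/r\ge\tfrac{\beta(0)}{2}r\ge\tfrac{\beta(0)}{2}r_0$). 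The crucial point is that on the ``forbidden'' interval $[0, C_m\sqrt h]$ the weight is harmless: there $p(r) = \varphi(r^2/2) = \mathcal O(h)$, so $e^{2\varepsilon p/h} = \mathcal O(1)$, and also $|\Psi'(r)|^2 + \lambda_m = \mathcal O(h)$.

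Splitting the identity according to the sign of $B_{m,h}$ then yields
\[c_m\int_{C_m\sqrt h}^{r_0}r^2\,|w|^2\,r\,\dd r + c_0\int_{r_0}^{+\infty}|w|^2\,r\,\dd r \ \le\ \int_{\{B_{m,h} < 0\}}\big(|\Psi'|^2 + \lambda_m\big)|w|^2\,r\,\dd r\ \le\ C h\,\norm{f}_{\mathrm{L}^2(r\,\dd r)}^2\,.\]
Dividing the first integral by $r^2\ge C_m^2 h$ and the second by $c_0$, and adding the trivial bound $\int_0^{C_m\sqrt h}|w|^2\,r\,\dd r\le C\norm{f}_{\mathrm{L}^2(r\,\dd r)}^2$ (using $e^{2\varepsilon p/h} = \mathcal O(1)$ there), I obtain $\norm{w}_{\mathrm{L}^2(r\,\dd r)}^2\le C\norm{f}_{\mathrm{L}^2(r\,\dd r)}^2$, which is the claim. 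To legitimize the Agmon identity before knowing that $w\in\mathrm{L}^2$, I would first run the argument with the bounded weights $\Psi_R := \varepsilon\min(p, R)$ and then let $R\to+\infty$, all the estimates above being uniform in $R$. The heart of the matter — and the reason a naive use of the crude magnetic lower bound $\mathscr{L}_{h,\A}\ge h\B$ is insufficient (indeed $h\B - \lambda_m - \varepsilon^2|\nabla P|^2$ is already negative near the minimum of $\B$) — is that the effective potential $\tfrac{(mh - a(r))^2}{r^2}$ is $h$‑dependent and attains its minimum value $0$ on the shrinking well $r\sim\sqrt h$ where the ground state concentrates, whereas $\lambda_m\sim\beta(0)h$; thus the classically allowed region has size $\sim\sqrt h$, which forces one to keep track of the constants $C_m$ and to exploit that the phase $p$ is only of order $h$ there. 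Additional (minor) care is needed for the sector $m = 0$ (degeneration of the centrifugal term at $r=0$) and for the turning region $\sqrt h\lesssim r\lesssim 1$, where $B_{m,h}$ is bounded below only by $c_m r^2$ rather than by a constant.
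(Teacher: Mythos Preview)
Your argument is correct and follows essentially the same route as the paper: reduce by radial symmetry to a one-dimensional Schr\"odinger operator, apply the Agmon identity with the weight $\varepsilon\varphi$ (suitably truncated), and exploit that the classically allowed region is $\{\rho\lesssim h\}$ (equivalently $\{r\lesssim\sqrt{h}\}$) where the phase is $\mathcal O(h)$ so that $e^{\varepsilon\varphi/h}$ is bounded there. The only cosmetic differences are that the paper works in the variable $\rho=r^2/2$ (operator $\mathcal N_{h,m}$) rather than $r$, packages the Agmon step as an abstract proposition (Proposition~\ref{Prop Agmon radial}) before verifying its hypotheses, and uses the piecewise-linear truncations $\chi_k(\varphi)$ instead of $\min(p,R)$.
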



Actually, we can even prove a stronger approximation of the
eigenfunctions.
\begin{theorem}\label{IN TH Ex Est R}
  For all $(\varepsilon,J,m)\in(0,1)\times\NN\times \NN$, there exist
  $C>0$ and $h_0>0$ such that, for all $h\in(0,h_0)$,
  \begin{equation}
    \left\Vert e^{\varepsilon P/h} \left(\Upsilon_{h,m}^{J} - \Pi_{m} \Upsilon_{h,m}^{J} \right) \right\Vert_{\textup{L}^2(\RR^2)}  \leq Ch^{J+1}\,.
  \end{equation}
\end{theorem}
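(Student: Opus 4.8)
The plan is to combine the exponentially weighted quasimode estimate of Corollary \ref{IN TH Ex R} with the exponential decay of the true eigenfunction from Theorem \ref{IN TH L2}, via a weighted spectral-gap argument. Write $\lambda := \lambda_{m}(\mathscr{L}_{h,\A})$, $\lambda^J := \lambda^J_{h,m}$, and decompose $\Upsilon^J_{h,m} = \Pi_m \Upsilon^J_{h,m} + R_h$ where $R_h := (1-\Pi_m)\Upsilon^J_{h,m}$. We already know from \eqref{IN Eigen inequality}-type bounds that $\|R_h\|_{\textup{L}^2} = \mathcal{O}(h^{J+1})$, but we need the stronger weighted bound on $R_h$. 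The key point is that $R_h$ lies in the range of $1-\Pi_m$, on which $\mathscr{L}_{h,\A}-\lambda$ is invertible with norm controlled by the spectral gap, which by \eqref{Eigenvalue Expansion} and the asymptotic simplicity of the low-lying eigenvalues is bounded below by $c h$ for some $c>0$ and $h$ small.

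The main steps are as follows. First, I would conjugate by the weight: set $w := e^{\varepsilon P/h}$ and study $w(\mathscr{L}_{h,\A}-\lambda)w^{-1}$ acting on $w R_h$. Expanding the conjugation produces $\mathscr{L}_{h,\A} - \lambda$ plus first-order terms of the form $h\,\mathcal{O}(\varepsilon|\nabla P|)$ and a zeroth-order term $-\varepsilon^2|\nabla P|^2 + \mathcal{O}(\varepsilon h)$ — the standard Agmon commutator computation, the same one underlying Theorem \ref{IN TH L2}. The point of choosing $\varepsilon<1$ is that the bad term $-\varepsilon^2|\nabla P|^2$ is strictly dominated, in the relevant region, by the effective potential $\mathrm{Re}(P)$-type barrier coming from $\mathscr{L}_{h,\A}-\B(q_0)h$, exactly as in the proof of Theorem \ref{IN TH L2}; this is what makes the weighted operator still coercive modulo the low-energy spectral subspace. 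Second, I would apply $w(\mathscr{L}_{h,\A}-\lambda)$ to $\Upsilon^J_{h,m}$: by Corollary \ref{IN TH Ex R}, $\|w(\mathscr{L}_{h,\A}-\lambda^J)\Upsilon^J_{h,m}\|_{\textup{L}^2} \leq C h^{J+2}$, and since $|\lambda-\lambda^J| = \mathcal{O}(h^{J+2})$ by the eigenvalue expansion together with Corollary \ref{IN TH Ei Ap R} (which forces $\lambda$ to be $\mathcal{O}(h^{J+2})$-close to $\lambda^J$), while $\|w\,\Upsilon^J_{h,m}\|_{\textup{L}^2} = \mathcal{O}(1)$ because $w = e^{\varepsilon P/h}$ exactly cancels against the $e^{-P/h}$ factor inside $\Upsilon^J_{h,m}$ up to the $e^{(\varepsilon-1)P/h}\leq 1$ factor, we get $\|w(\mathscr{L}_{h,\A}-\lambda)\Upsilon^J_{h,m}\|_{\textup{L}^2} \leq C h^{J+2}$. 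Third, I would use $(\mathscr{L}_{h,\A}-\lambda)\Pi_m\Upsilon^J_{h,m}=0$ to deduce $\|w(\mathscr{L}_{h,\A}-\lambda)R_h\|_{\textup{L}^2} \leq C h^{J+2} + \|[w,\mathscr{L}_{h,\A}]\,\text{(lower-order)}\|$; more carefully, I would rather work directly with $v := w R_h$ and the coercivity inequality
\[
\mathrm{Re}\,\langle w(\mathscr{L}_{h,\A}-\lambda)w^{-1} v, v\rangle \geq c h \|v\|_{\textup{L}^2}^2 - C\varepsilon h \|v\|_{\textup{L}^2}^2 \geq \frac{c}{2} h\|v\|^2_{\textup{L}^2}\,,
\]
valid on the weighted analogue of $\mathrm{Ran}(1-\Pi_m)$ once one checks that $v$ is, up to an $\mathcal{O}(h^{\infty})$ error controlled by Theorem \ref{IN TH L2}, orthogonal to $w\Upsilon_{h,m}$. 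Combining this coercivity with the Cauchy-Schwarz bound on $\langle w(\mathscr{L}_{h,\A}-\lambda)w^{-1}v,v\rangle = \langle w(\mathscr{L}_{h,\A}-\lambda)R_h, w R_h\rangle$ and the $\mathcal{O}(h^{J+2})$ estimate from Step 2 yields $\|v\| = \|w R_h\| = \mathcal{O}(h^{J+1})$, which is the claim.

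The main obstacle is Step 3: making precise the statement that $\mathscr{L}_{h,\A}-\lambda$, conjugated by the Agmon weight, remains invertible with $\mathcal{O}(h^{-1})$ norm on the correct complement. The subtlety is that $\Pi_m$ is the orthogonal projection for the unweighted $\textup{L}^2$ inner product, whereas the natural spectral decomposition for the weighted problem is not orthogonal; one must argue that the weighted eigenfunction $w\,\Upsilon_{h,m}$ is still, to sufficient accuracy, "almost orthogonal" to $v$, using Theorem \ref{IN TH L2} to bound $\|w\Upsilon_{h,m}\|_{\textup{L}^2}=\mathcal{O}(\|\Upsilon_{h,m}\|_{\textup{L}^2})=\mathcal{O}(1)$ and $\langle v, w\Upsilon_{h,m}\rangle = \langle w R_h, w\Upsilon_{h,m}\rangle$, then controlling this by writing $w^2 = e^{2\varepsilon P/h}$ and exploiting that $R_h \perp \Upsilon_{h,m}$ together with the exponential smallness of $\Upsilon_{h,m}^J$ away from $q_0$. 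Once this almost-orthogonality is in hand, the spectral gap $\lambda_{m+1}(\mathscr{L}_{h,\A}) - \lambda_m(\mathscr{L}_{h,\A}) \geq c h$ from \eqref{Eigenvalue Expansion} does the rest. I would therefore organize the write-up as: (a) recall/state the weighted coercivity (Agmon) estimate and the spectral gap; (b) establish the almost-orthogonality of $w\Upsilon^J_{h,m}$ and $w\Upsilon_{h,m}$; (c) run the energy estimate for $v=wR_h$; (d) conclude.
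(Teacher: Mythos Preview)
Your strategy can probably be carried out, but it takes a longer road than the paper, and the ``main obstacle'' you flag---weighted coercivity on $\mathrm{Ran}(1-\Pi_m)$ via almost-orthogonality---is simply absent from the paper's argument.

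After passing to the radial reduction $\mathcal{N}_{h,m}$ on $\RR^+$, the paper proves the analogous statement (Theorem~\ref{QT TH Ex Est R}) by a single application of the Agmon resolvent inequality of Proposition~\ref{Prop Agmon radial}:
\[
\|e^{\Phi_k/h}u\| \leq \frac{C}{h}\|e^{\Phi_k/h}(\mathcal{N}_{h,m}-z)u\| + C\|u\|,
\]
valid for \emph{every} $u\in\mathrm{Dom}(\mathcal{N}_{h,m})$ and every $z\in[0,c_0h]$, with no orthogonality hypothesis. The coercivity here comes from the potential alone: $\widetilde{V}_{h,m}(\rho)-2\rho|\Phi_k'|^2\geq Mh$ for $\rho\geq R_0h$, and on $\{\rho<R_0h\}$ everything is absorbed into the unweighted term $C\|u\|$. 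Taking $u=\Psi_{h,m}^J-\Gamma_m\Psi_{h,m}^J$ and $z=\lambda_0(\mathcal{N}_{h,m})$, the weighted term on the right is $\mathcal{O}(h^{J+2})$ by Corollary~\ref{QT TH Ex R} (essentially your Step~2), and the unweighted term is $\mathcal{O}(h^{J+1})$ by the unweighted approximation Corollary~\ref{Theorem approximate 1 radial}. That is the whole proof; no spectral gap for the weighted operator is invoked.

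Your route instead tries to extract the $h^{-1}$ gain from the spectral gap $\lambda_1-\lambda_0\geq ch$ applied to $Q(wR_h)-\lambda\|wR_h\|^2$, and then to subtract the Agmon commutator term $\int 2\rho|\Phi'|^2|wR_h|^2=\int\varepsilon^2 a(\rho)^2(2\rho)^{-1}|wR_h|^2$ afterwards. The difficulty is that this term is pointwise comparable to the potential $\widetilde{V}_{h,m}$ itself (both $\sim\tfrac12\beta(0)^2\rho$), so it is not lower order relative to the quadratic form; absorbing it costs a fraction of $Q$ and seems to force a restriction of the type $\varepsilon<\sqrt{2/3}$ rather than the full range $\varepsilon\in(0,1)$. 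The paper avoids this by never separating the two: it keeps $\widetilde{V}_{h,m}-\varepsilon^2 a^2/(2\rho)\geq (1-\varepsilon^2)\tfrac{\beta(0)^2}{2}\rho-\mathcal{O}(h)\geq Mh$ on $\{\rho\geq R_0h\}$, which works for every $\varepsilon<1$. Two smaller points: the orthogonality you actually need for a spectral-gap argument is $\langle wR_h,\Upsilon_{h,m}\rangle$, not $\langle wR_h,w\Upsilon_{h,m}\rangle$; and its size is $\mathcal{O}(h^{J+1})$ (via $\|R_h\|\cdot\|w\Upsilon_{h,m}\|$ and Theorem~\ref{IN TH L2}), not $\mathcal{O}(h^\infty)$---enough for the argument, but not what you wrote.
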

Without the radial symmetry, it is expected that such a result holds
only when the magnetic field satisfies some analyticity assumption.

\subsection{Organization of the article}
The article is organized as follows. Section \ref{sec.3} is devoted to the proof of Theorem \ref{IN TH WKB}. In Section \ref{sec.4}, we prove Theorems \ref{IN TH WKB R} and \ref{IN TH Ex Est R}.

\section{Proof of Theorem \ref{IN TH WKB} and of its consequences}\label{sec.3}
\subsection{The magnetic Laplacian in isothermal coordinates.}
Let $p^{*}\in M$ be the point in Theorem \ref{IN TH WKB}. There exists
an isothermal chart $(\Omega, \phi: \Omega \to \phi(\Omega))$
centered at $p^{*}$. We set $U:=\phi(\Omega)\subset\RR^2$ and $\tilde{g} :=e^{2\eta} g_0$ (the metric on $U$). 
We let $\mathcal{M}=\phi_{*} \A$.

\begin{lemma}
  Consider the operator acting on $\textup{L}^2(U,e^{2\eta}\dd q)$
  defined by
  \[\mathcal{L}_{h,\mathcal{M}}=e^{-2\eta}\left[ \left( -ih
        \partial_{q_1} - \mathcal{M}_1 \right)^2+ \left( -ih
        \partial_{q_2} - \mathcal{M}_2 \right)^2\right]\,.\] We have
  \begin{equation}\label{QT EQ relation}
    \mathscr{L}_{h,\A} = \phi^{*} \mathcal{L}_{h,\mathcal{M}}\,.
  \end{equation}
  Moreover,
  \begin{align*}
    \left(\frac{\partial \mathcal{M}_2}{\partial q_1} -\frac{\partial \mathcal{M}_1}{\partial q_2}\right) \dd q_1 \wedge \dd q_2 = \dd (\phi_{*}\A) = \phi_{*}\dd\A = \phi_{*} \left( \B \dd\mathcal{V}_{g} \right) = e^{2\eta}( \B\circ \phi^{-1})   \dd q_1 \wedge \dd q_2 \,.
  \end{align*}
\end{lemma}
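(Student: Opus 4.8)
The plan is to exploit the intrinsic (diffeomorphism-invariant) definition of the magnetic Laplacian, and then to specialize the coordinate formula \eqref{EQ ML on Manifold} to the conformal metric $\tilde g = e^{2\eta}g_0$. Since the isothermal chart satisfies $\phi^{*}(e^{2\eta}g_0) = g$, the map $\phi$ is an isometry from $(U,\tilde g)$ onto $(\Omega,g)$, so $\dd\V_g = \phi^{*}\dd\V_{\tilde g}$, the pullback $\phi^{*}\colon \mathrm{L}^2(U,e^{2\eta}\dd q) = \mathrm{L}^2(U,\dd\V_{\tilde g}) \to \mathrm{L}^2(\Omega,\dd\V_g)$ is unitary, and it maps $H^{1}_{0}(U)$ onto $H^{1}_{0}(\Omega)$. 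By definition $\mathcal{M} = \phi_{*}\A$, i.e. $\A = \phi^{*}\mathcal{M}$; since $\dd$ commutes with pullback we get $(-ih\dd - \A)(\phi^{*}v) = \phi^{*}\big((-ih\dd - \mathcal{M})v\big)$, and since dual metrics pull back as well, $g^{*} = \phi^{*}\tilde g^{*}$. Hence, for all $v_1,v_2\in H^{1}_{0}(U)$, $Q_{h,\A}(\phi^{*}v_1,\phi^{*}v_2) = \int_{U} \tilde g^{*}\big((-ih\dd - \mathcal{M})v_1,(-ih\dd - \mathcal{M})v_2\big)\,\dd\V_{\tilde g}$, which is precisely the quadratic form associated with $\mathcal{L}_{h,\mathcal{M}}$ on $\mathrm{L}^2(U,e^{2\eta}\dd q)$. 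By the uniqueness in the Lax--Milgram representation this gives \eqref{QT EQ relation}, understood as the operator identity $\mathscr{L}_{h,\A} = \phi^{*}\circ\mathcal{L}_{h,\mathcal{M}}\circ(\phi^{*})^{-1}$. Alternatively and more concretely, one may plug $G = e^{2\eta}\mathrm{I}_{2}$ — so that $\sqrt{|G|} = e^{2\eta}$ and $\sqrt{|G|}\,G^{k\ell} = \delta_{k\ell}$ — together with $A_k = \mathcal{M}_k\circ\phi$ into \eqref{EQ ML on Manifold}; the factors $\sqrt{|G|}\,G^{k\ell}$ collapse and one is left with $e^{-2\eta}\sum_{k=1}^{2}(h\mathrm{D}_k - \mathcal{M}_k)^2$, i.e. the claimed expression for $\mathcal{L}_{h,\mathcal{M}}$ read in the chart.

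For the second assertion I would invoke the naturality of $\dd$ once more: $\dd(\phi_{*}\A) = \phi_{*}(\dd\A)$. By the definition of $\B$ one has $\dd\A = \B\,\dd\V_g$, and pushing this forward, $\phi_{*}(\B\,\dd\V_g) = (\B\circ\phi^{-1})\,\phi_{*}(\dd\V_g) = (\B\circ\phi^{-1})\,\dd\V_{\tilde g} = e^{2\eta}(\B\circ\phi^{-1})\,\dd q_1\wedge\dd q_2$, where I used $\phi_{*}\dd\V_g = \dd\V_{\phi_{*}g} = \dd\V_{\tilde g}$ (taking $\phi$ orientation-preserving) and $\dd\V_{\tilde g} = e^{2\eta}\dd q_1\wedge\dd q_2$. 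On the other hand $\phi_{*}\A = \mathcal{M} = \mathcal{M}_1\,\dd q_1 + \mathcal{M}_2\,\dd q_2$, so that $\dd(\phi_{*}\A) = \big(\partial_{q_1}\mathcal{M}_2 - \partial_{q_2}\mathcal{M}_1\big)\dd q_1\wedge\dd q_2$. Comparing the two computations yields the stated chain of equalities.

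The calculation is routine throughout; the only delicate point is to make precise the meaning of \eqref{QT EQ relation} as an equality of unbounded self-adjoint operators, namely that $\phi^{*}$ intertwines their domains. This is immediate once one notes that $\phi^{*}$ is unitary between the relevant $\mathrm{L}^2$ spaces and a bijection of the form domains $H^{1}_{0}$, so I do not anticipate a genuine obstacle.
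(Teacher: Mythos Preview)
Your proposal is correct and follows essentially the same route as the paper: both arguments identify the quadratic forms after the change of variables $\phi$, using that $\phi$ is an isometry from $(U,e^{2\eta}g_0)$ to $(\Omega,g)$, and then read off the operator expression in the conformal chart. The paper carries out the coordinate computation explicitly (writing the matrix of $\tilde g$ and integrating by parts), while you phrase the same step via naturality of $\dd$ and of the dual metric under pullback and also offer the shortcut of plugging $G=e^{2\eta}\mathrm{I}_2$ directly into \eqref{EQ ML on Manifold}; these are cosmetic differences, not a different approach.
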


\begin{proof}
  For all $u,v\in \mathscr{C}_{0}^{\infty}(\Omega)$, we have
  \begin{align*}
    \int_{\Omega} g^*((-ih\mathrm{d}-\A)u,(-ih\mathrm{d}-\A)v) \dd \mathcal{V}_{g}  &=\int_{U} \phi_{*}\left( g^*((-ih\mathrm{d}-\A)u,(-ih\mathrm{d}-\A)v) \dd \mathcal{V}_{g}  \right)\\
                                                                                    &= \int_{U} \tilde{g}^*((-ih\mathrm{d}-\varphi^{*}\A)\tilde{u},(-ih\mathrm{d}-\varphi^{*}\A)\tilde{v} ) \vert \tilde{G} \vert^{\frac{1}{2}} \dd q\,,
  \end{align*}
  where $\tilde{u}:=\phi_{*} u$, $\tilde{v}:= \phi_{*} v$ and $\tilde{G}=\begin{pmatrix}
  e^{-2\eta} && 0\\
  0 && e^{-2\eta}
  \end{pmatrix}$ is the matrix of $\tilde{g}$.\\
  By considering $\mathcal{M}$ as a vector field
  $(\mathcal{M}_1,\mathcal{M}_2)^{T}$, we have
  \begin{align*}
    & \int_{\Omega} g^*((-ih\mathrm{d}-\A)u,(-ih\mathrm{d}-\A)v)\, \dd \mathcal{V}_{g} \\
    & = \int_{U}\left\langle \tilde{G}^{-1}(-ih \nabla_{q}-\mathcal{M})\tilde{u},(-ih \nabla_{q}-\mathcal{M})\tilde{v}  \right\rangle_{\mathbb{C}^2} \vert \tilde{G} \vert^{\frac{1}{2}} \,\dd q \\
    &= \int_{U}   \left[\left(-ih \partial_{q_1} - \mathcal{M}_1 \right)^2   + \left(-ih \partial_{q_2} - \mathcal{M}_2 \right)^2 \right]\tilde{u} \overline{\tilde{v}} \, \dd q \,,
  \end{align*}
  where the last equality is obtained by an integration by parts.
\end{proof}

\subsection{Spectral analysis with the WKB method}\label{Spectral
  Problem}

\subsubsection{A choice of the magnetic potential}\label{Subsection MP}
Let us denote $\mathcal{B}(q)=\B(\phi^{-1}(q))$. Through a linear
change of variable in $\RR^2$, without loss of generality, we can
write
\[
  \mathcal{B}(q_1,q_2)= b_0+\alpha q_1^2+\gamma q_2^2
  +\mathcal{O}(\Vert q\Vert^3) \qquad \text{ with } b_0>0 \text{ and }
  0<\alpha\leq \gamma\,.
\]
The following lemma will be useful to define a special vector
potential.
\begin{lemma}\label{Poisson Lemma}
  There exists a smooth solution of
  \begin{equation}\label{Eq Poisson}
    \Delta \Psi =e^{2\eta}\mathcal{B}\,,
  \end{equation}
  in a neighborhood of $U$ such that
  \[
    \Psi(q_1,q_2) =
    \frac{e^{2\eta(0)}\mathcal{B}(0)}{4}(q_1^2+q_2^2)+\mathcal{O}(\Vert
    q \Vert^3)\,.
  \]
\end{lemma}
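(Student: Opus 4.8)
The plan is to solve the Poisson equation globally on $\RR^2$ by convolving with the fundamental solution of the Laplacian, and then to correct the low-order Taylor jet of the solution by subtracting a harmonic polynomial of degree at most two, which does not change the equation.

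First I would set $f:=e^{2\eta}\mathcal{B}$, which is smooth on a neighborhood of $\overline{U}$, and extend it to a function $\tilde f\in\mathscr{C}_0^\infty(\RR^2)$ coinciding with $f$ on a smaller neighborhood $V$ of $\overline{U}$ (just multiply by a cut-off). Let $E(q)=\tfrac{1}{2\pi}\ln\Vert q\Vert$ be the fundamental solution of $\Delta$ in dimension two. Then $\Psi_0:=E*\tilde f$ is smooth on $\RR^2$ (the convolution of an $\mathrm{L}^1_{\mathrm{loc}}$ function with a compactly supported smooth function is smooth, since $\partial^\alpha(E*\tilde f)=E*\partial^\alpha\tilde f$ is continuous), and $\Delta\Psi_0=\tilde f$, hence $\Delta\Psi_0=f$ on $V$. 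In particular $\Delta\Psi_0(0)=f(0)=e^{2\eta(0)}\mathcal{B}(0)$.

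Next I would expand $\Psi_0$ at the origin to second order, writing $\Psi_0(q)=c+L(q)+Q(q)+\mathcal{O}(\Vert q\Vert^3)$ with $c\in\RR$, $L$ linear, and $Q$ a quadratic form. Taking the Laplacian, the trace of the Hessian of $Q$ equals $\Delta\Psi_0(0)=e^{2\eta(0)}\mathcal{B}(0)$, so
\[
  Q(q)=\frac{e^{2\eta(0)}\mathcal{B}(0)}{4}\left(q_1^2+q_2^2\right)+Q_h(q)\,,
\]
where $Q_h:=Q-\tfrac{e^{2\eta(0)}\mathcal{B}(0)}{4}(q_1^2+q_2^2)$ is trace-free, hence harmonic (in two dimensions the trace-free quadratic forms are spanned by $q_1^2-q_2^2$ and $q_1q_2$). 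Since constants and linear forms are also harmonic, the polynomial $P_{\mathrm{harm}}:=c+L+Q_h$ satisfies $\Delta P_{\mathrm{harm}}=0$. Therefore $\Psi:=\Psi_0-P_{\mathrm{harm}}$ is smooth on a neighborhood of $\overline{U}$, still solves $\Delta\Psi=e^{2\eta}\mathcal{B}$ there, and has Taylor expansion $\Psi(q)=\tfrac{e^{2\eta(0)}\mathcal{B}(0)}{4}(q_1^2+q_2^2)+\mathcal{O}(\Vert q\Vert^3)$, as required; restricting to $V$ gives the statement on a neighborhood of $U$.

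I do not expect a genuine obstacle: local solvability of the Poisson equation with a smooth right-hand side is classical, and the only slightly delicate point is the jet bookkeeping — namely noticing that every monomial of degree $\le 2$ other than the isotropic quadratic $q_1^2+q_2^2$ lies in the kernel of $\Delta$, so that prescribing the two-jet of $\Psi$ is free of charge for the equation. (An alternative would be to build a formal power-series solution degree by degree, using the surjectivity of $\Delta:\mathcal{P}_{k+2}\to\mathcal{P}_k$ on homogeneous polynomials, Borel-sum it, and then solve away a remainder flat at $p^*$; but the convolution argument above is shorter.)
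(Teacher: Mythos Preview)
Your proof is correct and follows essentially the same approach as the paper: obtain some smooth solution of the Poisson equation, then subtract a harmonic polynomial of degree at most two to fix the second-order Taylor jet at the origin. The only difference is cosmetic: you construct the initial solution explicitly via convolution with the fundamental solution, whereas the paper simply invokes the well-known solvability of $\Delta u = f$ and then writes down the explicit harmonic correction $\varphi$; the jet bookkeeping is identical.
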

\begin{proof}
  It is well-known that the Poisson equation \eqref{Eq Poisson} always
  has smooth solutions modulo harmonic functions. Consider such a
  solution $u$. We have
  \[
    u(q_1,q_2) = u(0) + \frac{\partial u(0)}{\partial{q_1}}q_1
    +\frac{\partial u(0)}{\partial{q_2}}q_2 + \frac{1}{2}
    \frac{\partial^2 u(0)}{\partial{q_1^2}}q_1^2+ \frac{\partial^2
      u(0)}{\partial{q_1q_2}}q_1 q_2 +\frac{1}{2} \frac{\partial^2
      u(0)}{\partial{q_2^2}}q_2^2 +\mathcal{O}(\Vert x\Vert^3)\,,
  \]
  and consider the harmonic function
  \[
    \varphi(q) = -\left[u(0)+ \frac{\partial u(0)}{\partial{q_1}}q_1
      +\frac{\partial u(0)}{\partial{q_2}}q_2+\frac{\partial^2
        u(0)}{\partial{q_1q_2}}q_1 q_2
    \right]+\frac{1}{4}\left(\frac{\partial^2 u(0)}{\partial{q_2^2}}-
      \frac{\partial^2 u(0)}{\partial{q_1^2}}\right)(q_1^2-q_2^2)\,.
  \]
  Then, $\Psi=u+\varphi$ satisfies
  \[
    \Psi(q) = a(q_1^2+q_2^2) +\mathcal{O}(\Vert q\Vert^3)\,,\quad 
    a=\frac{\partial^2 \Psi(0)}{\partial{q_1}^2}=\frac{\partial^2
      \Psi(0)}{\partial{q_2}^2}\,.
  \]
  Therefore, from \eqref{Eq Poisson} at $0$, we see that
  $a=\frac{e^{2\eta(0)}\mathcal{B}(0)}{4}$.
\end{proof}
Let $\Psi$ be the function given by Lemma \ref{Poisson Lemma}, we let
$A:= (-\partial_{q_2} \Psi, \partial_{q_1} \Psi)$. Then $A$ satisfies
\[
  \frac{\partial \mathcal{M}_2}{\partial q_1} -\frac{\partial
    \mathcal{M}_1}{\partial q_2} =\frac{\partial A_2}{\partial q_1}
  -\frac{\partial A_1}{\partial q_2} \,.
\]
The Poincaré lemma implies the existence of a function
$\theta\in \textup{C}^{\infty}(U)$ such that
\[
  A=\mathcal{M}+ \nabla \theta\,.
\]
By gauge invariance, we get
\begin{equation}\label{eq.gauge}
  \mathcal{L}_{h,A} = e^{i\theta/h} \mathcal{L}_{h,\mathcal{M}} e^{-i\theta/h}\,.
\end{equation}
Note that, with the choice of the magnetic potential
$A=(-\partial_{q_2} \Psi, \partial_{q_1} \Psi)$, we have
$\nabla\cdot A =0$.

\subsubsection{WKB analysis}
The eigenvalue equation reads
\begin{equation}
  \mathcal{L}_{h,A} u(q,h) = \lambda(h) u(q,h)\,.
\end{equation}
We look for a solution $u(q,h)$ in the form
\[
  u(q,h) = e^{-S(q)/h} a(q,h)\,,
\]
where $a$ and $S$ are complex-valued functions. We have
\begin{equation}\label{Eigen problem}
  \left( \mathcal{L}_{h,A}^{S}-\lambda(h)\right)\, a(q,h) = 0\,,\quad \mathcal{L}_{h,A}^{S} = e^{S/h}\mathcal{L}_{h,A}e^{-S/h}\,.
\end{equation}
We have
\begin{align*}
  \mathcal{L}_{h,A}^{S}\,  &= e^{-2\eta} e^{S/h}(-ih \nabla - A)^2e^{-S/h}\\
                           &= e^{-2\eta} \left[\left(-A_1+ i \partial_{q_1}S \right)^2 +\left(-A_2+ i \partial_{q_2}S \right)^2 +ih \nabla \cdot A  +h \Delta S+2h(\nabla S+iA) \cdot \nabla - h^2 \Delta \right]\,.
\end{align*} 
Since $\nabla\cdot A=0$, gathering the terms of same order in $h$, we
can write $\mathcal{L}_{h,A}^{S}$ as
\begin{equation}
  \mathcal{L}_{h,A}^{S} = E_0^{S} + h E_{1}^{S} -h^2 \Delta,
\end{equation}
where
\begin{equation}
  \begin{split}
    E_{0}^{S} &= e^{-2\eta} \left[\left(-A_1+ i \partial_{q_1}S \right)^2 +\left(-A_2+ i \partial_{q_2}S \right)^2\right]\,,\\
    E_{1}^{S}  &= e^{-2\eta}\left(\Delta S+2(\nabla S+iA) \cdot
      \nabla \right)\,.
  \end{split}
\end{equation}
We look for $\lambda(h)$ and $a(q,h)$ in the form
\begin{equation}\label{Eigenfunction}
  \lambda(h) = h \sum_{j=0}^{\infty} \mu_j h^{j}, \qquad a(q,h) = \sum_{j=0}^{\infty} a_j(q) h^{j}\,,
\end{equation}
where $(a_{j})_{j\geq 0}$ are smooth complex-valued functions and $(\mu_{j})_{j\in \NN}\subset \RR$.\\
Let us substitute \eqref{Eigenfunction} into \eqref{Eigen problem}, we
get the sequence of equations
\begin{align*}
  &h^0 : &&E_0^{S} a_0 = 0\\
  &h^1 : &&E_0^{S} a_1 + \left(E_1^{S} -\mu_0 \right)a_0 = 0\,,\\
  &h^2 : &&E_0^{S} a_2 + \left(E_1^{S} -\mu_0 \right)a_1 = \left(\mu_1+e^{-2\eta}\Delta  \right)a_0\,,\\
  &\qquad &&.............................................\\
  &h^n : &&E_0^{S} a_n + \left(E_1^{S} -\mu_0 \right)a_{n-1}= \left(\mu_1+e^{-2\eta}\Delta \right)a_{n-2}+\sum_{j=2}^{n-1}\mu_ja_{n-1-j}\,.
\end{align*}
In \cite{BR17}, under an analyticity assumption, a similar system of
equations was solved exactly in a neighbourhood of $0$. Here, without
an analyticity assumption, we will only be able to solve this exactly
in a space of formal series at $0$.
\subsection{WKB construction}\label{QT ST WKB}
\begin{notation}
  Let $\hat{f}$ be the Taylor formal series of
  $f \in \mathscr{C}^{\infty}(\RR^2,\mathbb{C})$ at zero, \emph{i.e.},
  \[
    \hat{f}(q_1,q_2) = \sum_{m,n\geq 0}\frac{1}{m!n!}
    \frac{\partial^{m+n} f(0)}{\partial q_1^m \partial q_2^n} q_1^m
    q_2^n.
  \]
  Let $\tilde{f}$ be the formal series after changing variable
  $(q_1,q_2) =\left(\frac{z+w}{2}, \frac{z-w}{2i} \right)$ with
  $(z,w)\in \mathbb{C}^2$. $\mathbb{C}[[z]]$ denotes the ring of
  formal series in the variable $z$ with coefficients in $\mathbb{C}$
  and $\mathbb{C}[[z,w]]$ is the ring of formal series in the variable
  $(z,w)$ with coefficients in $\mathbb{C}$.
\end{notation}
\subsubsection{The eikonal equation}
Let us find $\hat{S}$ in $\mathbb{C}[[q_1,q_2]]$ such that
\[ \left(-\hat{A}_1+ i \partial_{q_1}\hat{S} \right)^2
  +\left(-\hat{A}_2+ i \partial_{q_2}\hat{S} \right)^2 = 0 ,\] and
thus such that
\[ \left( -\hat{A}_1+ i \partial_{q_1}\hat{S} +i(-\hat{A}_2+ i
    \partial_{q_2}\hat{S})\right)\left( -\hat{A}_1+ i
    \partial_{q_1}\hat{S} -i(-\hat{A}_2+ i
    \partial_{q_2}\hat{S})\right)=0 \,.\] 
Let us consider an $\hat{S}$
such that
\[ -\hat{A}_1+ i \partial_{q_1}\hat{S} +i(-\hat{A}_2+ i
  \partial_{q_2}\hat{S}) =0\,.\] It satisfies
\[2\partial_{\bar{z}} \hat{S} = -i \hat{A}_1+\hat{A}_2, \qquad \text{
    with }\quad \partial_{\bar{z}}:= \frac{1}{2} (\partial_{q_1} +
  i\partial_{q_2} ). \] Notice that we also have
$\partial_{\bar{z}} \hat{\Psi} = -i \hat{A}_1+\hat{A}_2$. It implies
that
\begin{equation}\label{QT EQ Bar S}
  \partial_{\bar{z}} \hat{S} =\partial_{\bar{z}} \hat{\Psi}\,.
\end{equation}
After changing the variable $q_1 = \frac{z+w}{2}$ and
$q_2=\frac{z-w}{2i}$ in the formal series $\hat{S}$ and $\hat{\Psi}$,
we have $\partial_{w}=\partial_{\bar{z}}$. Thus, $\hat{S}$ satisfies
\eqref{QT EQ Bar S} if and only if $\tilde{S}$ satisfies
\[\partial_{w} \tilde{S}(z,w) = \partial_{w} \tilde{\Psi}(z,w)\,,\]
and thus $\tilde{S}$ has the form
\begin{equation}\label{QT EQ Choice S}
  \tilde{S}(z,w) = \tilde{\Psi}(z,w)+ f(z)\,,
\end{equation}
where $\displaystyle f(z)= \sum_{m \geq 0} f_{m} z^{m}$ be a formal
series in $\mathbb{C}[[z]]$ to be determined later. Next, we write the
transport equations. With the choice of $\tilde{S}$ in \eqref{QT EQ
  Choice S}, we have
\begin{equation}
  \Delta \tilde{S} = \Delta \tilde{\Psi} = \widetilde{\mathcal{E}^{-1}}\tilde{\mathcal{B}}\,\qquad \mathcal{E}= e^{-2\eta}\,.
\end{equation}
The term $(\nabla S+iA) \cdot \nabla$ in $E_{1}^S$ can be written as
\begin{align*}
  &(\partial_{q_1} \hat{S} +i\hat{A}_1) \partial_{q_{1}}+(\partial_{q_2} \hat{S} +i\hat{A}_2) \partial_{q_2} = 2\left(2\partial_{z} \tilde{\Psi} +f'(z)\right)\partial_{w}\,.
\end{align*}
The operator $E_{1}^{S}$ becomes
\begin{align*}
  E_{1}^{\tilde{S}} &= 4\tilde{\mathcal{E}}\left(2\partial_{z} \tilde{\Psi}+f'(z) \right)\partial_{w}+\tilde{\mathcal{B}}\,.
\end{align*}
Finally, we obtain the system of the transport equations:
\begin{align*}
  &h^1 : &&\left[4\tilde{\mathcal{E}}\left(2\partial_{z} \tilde{\Psi}+f'(z) \right)\partial_{w}+\tilde{\mathcal{B}} -\mu_0 \right]A^{(0)} = 0\,,\\
  &h^2 : &&\left[4\tilde{\mathcal{E}}\left(2\partial_{z} \tilde{\Psi}+f'(z) \right)\partial_{w}+\tilde{\mathcal{B}} -\mu_0 \right]A^{(1)} = \left(\mu_1+4\tilde{\mathcal{E}}\partial_{z}\partial_{w}  \right)A^{(0)}\,,\\
  &\qquad &&.............................................\\
  &h^n : &&\left[4\tilde{\mathcal{E}}\left(2\partial_{z} \tilde{\Psi}+f'(z) \right)\partial_{w}+\tilde{\mathcal{B}} -\mu_0 \right]A^{(n-1)}= \left(\mu_1+4\tilde{\mathcal{E}}\partial_{z}\partial_{w} \right)A^{(n-1)}+\sum_{j=2}^{n-1}\mu_jA^{(n-1-j)}\,.
\end{align*}
\subsubsection{Some lemmas}
In this subsection, we prove some useful lemmas for solving the
transport equations.
\begin{lemma}\label{function w}
  There exists a formal series $w(z) = \sum_{k\geq 1} w_{k} z^{k} $ in
  $\mathbb{C}[[z]]$ satisfying
  \begin{equation}\label{Eq of w}
    \tilde{\mathcal{B}}(z,w(z)) = b_0\,, 
  \end{equation}
  and such that
  $w_1 =
  \frac{\sqrt{\gamma}-\sqrt{\alpha}}{\sqrt{\gamma}+\sqrt{\alpha}}$.
\end{lemma}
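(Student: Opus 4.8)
The plan is to determine the coefficients $w_k$ recursively from the constraint $\tilde{\mathcal{B}}(z,w(z))=b_0$; the one thing to watch is that the linearization of this constraint at the origin is degenerate, so the equation must first be renormalized. I would start by expanding $\tilde{\mathcal{B}}$ to second order: from $\mathcal{B}(q_1,q_2)=b_0+\alpha q_1^2+\gamma q_2^2+\mathcal{O}(\|q\|^3)$ and $q_1=\tfrac{z+w}{2}$, $q_2=\tfrac{z-w}{2i}$ one gets
\[
\tilde{\mathcal{B}}(z,w)=b_0+\frac{\alpha-\gamma}{4}\,(z^2+w^2)+\frac{\alpha+\gamma}{2}\,zw+\mathcal{O}(\|(z,w)\|^3)\,.
\]
Thus $\tilde{\mathcal{B}}-b_0$ vanishes to order $2$, and the classical implicit function theorem does not apply at $(0,0)$. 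To fix this I would substitute $w=z\,v$ and factor out $z^2$: the equation $\tilde{\mathcal{B}}(z,zv)-b_0=0$ becomes $z^2H(z,v)=0$ with $H\in\mathbb{C}[[z,v]]$ and
\[
H(0,v)=\frac{\alpha-\gamma}{4}\,v^2+\frac{\alpha+\gamma}{2}\,v+\frac{\alpha-\gamma}{4}\,,
\]
because every monomial $z^aw^b$ of $\tilde{\mathcal{B}}-b_0$ with $a+b\ge 3$ turns into $z^{a+b}v^b$, which is divisible by $z^3$ and hence contributes only to the $z(\cdots)$ part of $H$.

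Next I would check that $v_0:=\tfrac{\sqrt{\gamma}-\sqrt{\alpha}}{\sqrt{\gamma}+\sqrt{\alpha}}$ is a root of $H(0,\cdot)$: solving the quadratic with discriminant $(\alpha+\gamma)^2-(\alpha-\gamma)^2=4\alpha\gamma$ and using $\alpha-\gamma=(\sqrt{\alpha}-\sqrt{\gamma})(\sqrt{\alpha}+\sqrt{\gamma})$ gives the two roots $\tfrac{-(\alpha+\gamma)\pm2\sqrt{\alpha\gamma}}{\alpha-\gamma}$, of which the one with the $+$ sign equals $v_0$ (the other has modulus $>1$); when $\alpha=\gamma$ the equation degenerates to a linear one whose unique root is $v_0=0$. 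The decisive point is the non-degeneracy
\[
\partial_vH(0,v_0)=\frac{\alpha+\gamma}{2}+\frac{\alpha-\gamma}{2}\,v_0=\sqrt{\alpha\gamma}\neq 0\,,
\]
which follows from $(\alpha-\gamma)v_0=-(\sqrt{\gamma}-\sqrt{\alpha})^2$ together with the positivity of $\alpha,\gamma$.

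Finally, since $H(0,v_0)=0$ and $\partial_vH(0,v_0)\neq0$, the formal implicit function theorem (applied after the shift $v\mapsto v_0+v$) yields a unique $v(z)\in\mathbb{C}[[z]]$ with $v(0)=v_0$ and $H(z,v(z))=0$; setting $w(z):=z\,v(z)=\sum_{k\ge1}w_kz^k$ gives $\tilde{\mathcal{B}}(z,w(z))=b_0$ with $w_1=v_0$, as required. Equivalently and more explicitly, collecting the coefficient of $z^{n+1}$ in $\tilde{\mathcal{B}}(z,w(z))-b_0$ produces, for $n\ge2$, an equation of the form $\sqrt{\alpha\gamma}\,w_n+R_n(w_1,\dots,w_{n-1})=0$ with $R_n$ a polynomial in the $w_j$ and the Taylor coefficients of $\mathcal{B}$, so each $w_n$ is determined, while the $z^2$-coefficient is exactly $(\alpha-\gamma)w_1^2+2(\alpha+\gamma)w_1+(\alpha-\gamma)=0$, whose relevant root is $w_1=v_0$. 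The only real obstacle is this degeneracy at the origin: the order-$2$ vanishing of $\tilde{\mathcal{B}}-b_0$ forces the rescaling $w=zv$ and makes the equation for $w_1$ quadratic, so that prescribing the value of $w_1$ amounts to selecting the root of modulus $<1$.
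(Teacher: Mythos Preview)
Your proof is correct and, at its core, identical to the paper's: both reduce the problem to a recursion in which the coefficient of $w_n$ (for $n\ge 2$) is $\tfrac{\alpha+\gamma}{2}+\tfrac{\alpha-\gamma}{2}\,w_1=\sqrt{\alpha\gamma}\neq 0$, after the quadratic equation for $w_1$ has been solved by picking the root of modulus $\le 1$. The paper carries out this recursion by hand, collecting the coefficient of $z^{p+1}$ directly; your blow-up substitution $w=zv$ simply packages the same computation as an application of the formal implicit function theorem to $H(z,v)=z^{-2}(\tilde{\mathcal B}(z,zv)-b_0)$, with your $\partial_vH(0,v_0)=\sqrt{\alpha\gamma}$ being exactly the paper's $\tilde b_{11}+2\tilde b_{02}w_1$. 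The explicit recursion you give at the end is word-for-word the paper's argument.
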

\begin{proof}
  We write
  $\tilde{\mathcal{B}}(z,w) = \sum_{m,n\geq 0}
  \tilde{b}_{mn}z^{m}w^{n}$ and
  \begin{align*}
    \sum_{m,n\geq 0} \tilde{b}_{mn}z^{m} \left(\sum_{k\geq 1} w_{k}z^{k} \right)^n = b_0\,.
  \end{align*}
  Note that
  \begin{align*}
    \tilde{\mathcal{B}}(z,w) = b_0 + \frac{1}{4}(\alpha-\gamma) z^2 +\frac{1}{2}(\alpha+\gamma) zw+ \frac{1}{4}(\alpha-\gamma) w^2 +...
  \end{align*}
  Collecting the various terms, we have
  \begin{enumerate}[(a)]
  \item Term $z^{0}$ : $\tilde{b}_{00}= b_0$.
  \item Term $z^{1}$ : $\tilde{b}_{10} + \tilde{b}_{01}w_{1}= 0$.
  \item Term $z^{2}$ :
    $\tilde{b}_{02} w_1^2 + \tilde{b}_{11}w_1 + \tilde{b}_{20}=0$.
    There are two solutions for $w_1$:
    \begin{align*}
      \frac{\sqrt{\gamma}+\sqrt{\alpha}}{\sqrt{\gamma}-\sqrt{\alpha}}\,,\qquad \text{ and }\qquad \frac{\sqrt{\gamma}-\sqrt{\alpha}}{\sqrt{\gamma}+\sqrt{\alpha}}\,.
    \end{align*}
    We choose
    $w_1 =
    \frac{\sqrt{\gamma}-\sqrt{\alpha}}{\sqrt{\gamma}+\sqrt{\alpha}}$.
  \item Term $z^{3}$: notice that the equation obtained by collecting
    the coefficients of the term $z^3$ does not contain $w_k$ for
    $k\geq 3$ since $\tilde{b}_{01} =0$. This equation is linear in
    $w_2$ whose prefactor is
    \[\tilde{b}_{11}+ 2\tilde{b}_{02}w_1 =
      \frac{1}{2}(\alpha+\gamma)+\frac{1}{2}(\alpha-\gamma)\frac{\sqrt{\gamma}-\sqrt{\alpha}}{\sqrt{\gamma}+\sqrt{\alpha}}
      = \sqrt{ \alpha \gamma}> 0\,. \]
  \end{enumerate}
  By induction, let $p \in \NN\backslash \{0\}$, we assume that
  $(w_{k})_{1\leq k\leq p-1}$ are determined and we need to look for
  $w_{p}$. We collect all coefficients of $z^{p+1}$, and since
  $b_{01}=0$ , we get an equation containing only a finite number of
  terms $(w_{k})_{1\leq k \leq p}$ and
  $(\tilde{b}_{mn})_{0\leq m,n \leq p+1}$. The equation is linear in
  $w_{p}$ whose prefactor is
  \[ \tilde{b}_{11}+ 2\tilde{b}_{02}w_1=\sqrt{ \alpha \gamma} \neq
    0.\] So, $w_p$ is determined.
\end{proof}
\begin{lemma}\label{ODE}
  Let $V(s,t)$ and $F(s,t)$ be formal series in
  $\mathbb{C}[[s,t]]$. We write $V(s,t)$ and $F(s,t)$ in the form
  \[ V(s,t):=\sum_{m\geq0} v_{m}(s) t^{m}\,,\qquad F(s,t):=\sum_{m
      \geq 0} f_{m}(s) t^{m}\,,\] where $(v_n(s))_{n\in \NN}$ and
  $(f_n(s))_{n\in \NN}$ are the sequences in $\mathbb{C}[[s]]$. We
  assume that $v_0(s)=0$, $v_1(s)=v_1$ , $f_0(s)=f_0$ with
  $v_1 \in \RR\setminus \{0\}$ and $f_0\in \RR$.  Let $\ell \in \NN$,
  then
  \begin{enumerate}[\rm (i)]
  \item the homogeneous equation
    \begin{equation}\label{homogeneous eq}
      (V(s,t) \partial_t + F(s,t)) u(s,t) = 0 \,,
    \end{equation}
    has solutions in the set
    \[W(\ell)= \left\{\sum_{m\geq0} w_{m}(s) t^{m} \in
        \mathbb{C}[[s,t]]: w_{k}(s) =0 \,\text{ for } k\in
        \{0,...,(\ell-1)\} \text{ and } w_{\ell}(s)\neq 0 \right\}\]
    if and only if $f_0+\ell v_1=0$.
  \item Under this condition, there exists a family
    $(c_k(s))_{k=0...\ell} \subset \mathbb{C}[[s]] $ such that the
    inhomogeneous equation
    \begin{equation}\label{inhomogeneous eq}
      (V(s,t) \partial_z + F(s,t)) u(s,t) = G(s,t)=\sum_{m \geq 0} g_{m}(s) t^{m} \,,
    \end{equation}
    has a formal solution in the form
    \begin{equation}\label{QT EQ U}
      u(s,t)= \sum_{m \geq 0} u_{m}(s) t^{m}\,,
    \end{equation}
    if and only if
    \begin{equation}\label{inhomogenous cond}
      c_{\ell}(s) g_0(s) +c_{\ell-1}(s)g_1(s)+...+c_0(s)g_{\ell}(s) =0.
    \end{equation}
    Here, the coefficients
    $(c_k(s))_{k=0...\ell}\subset \mathbb{C}[[s]]$ are determined by
    $(v_{j}(s))_{1\leq j\leq (\ell+1)}$ and
    $(f_j(s))_{1\leq j \leq \ell}$, and $c_0(s)= 1$. Furthermore,
    assume that \eqref{inhomogenous cond} is satisfied, if
    $u_{\ell}(s)$ is given, the formal series solution $u$ will be
    determined uniquely by the relation
    \begin{equation*}
      u_{m}(s)= \frac{g_m(s)-\sum_{j=0}^{m-1} ( jv_{m-j+1}(s)+f_{m-j}(s))u_{j}(s) }{(m-\ell)v_1}\,,
    \end{equation*}
    for all $m\in\NN \setminus \{ \ell\}$.
  \end{enumerate}
\end{lemma}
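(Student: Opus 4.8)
The plan is to reduce the whole lemma to one bookkeeping computation: expanding the action of $V(s,t)\partial_t+F(s,t)$ on a generic formal series $u(s,t)=\sum_{m\ge 0}u_m(s)t^m$. Writing $V=\sum_{k\ge 1}v_k(s)t^k$ (recall $v_0=0$) and $F=\sum_{j\ge 0}f_j(s)t^j$, I would carry out the two Cauchy products in $V\partial_t u$ and $Fu$ and, after reindexing, read off that the coefficient of $t^m$ in $(V\partial_t+F)u$ equals $f_0u_0(s)$ for $m=0$ and
\[
(mv_1+f_0)\,u_m(s)+\sum_{j=0}^{m-1}\bigl(j\,v_{m-j+1}(s)+f_{m-j}(s)\bigr)u_j(s)
\]
for $m\ge 1$. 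The structural point to emphasise is that this is triangular in the $u_j$: the coefficient of $t^m$ contains $u_m$ only through the scalar factor $mv_1+f_0$, while all remaining terms involve only $u_0,\dots,u_{m-1}$ and finitely many of the $v_j,f_j$. Under the hypothesis $f_0+\ell v_1=0$ one has $mv_1+f_0=(m-\ell)v_1$, which is a nonzero constant precisely when $m\ne\ell$.

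For part (i), the ``only if'' direction is immediate: a solution $u=\sum_{m\ge\ell}w_m(s)t^m\in W(\ell)$ of the homogeneous equation has its $t^\ell$-coefficient equal to $(\ell v_1+f_0)w_\ell(s)$, which must vanish, and since $w_\ell\ne 0$, $v_1\ne 0$, this forces $f_0+\ell v_1=0$. For ``if'', I would set $w_k=0$ for $k<\ell$ and $w_\ell=1$: the $t^m$-coefficients vanish automatically for $m<\ell$, vanish at $m=\ell$ because $\ell v_1+f_0=0$, and for $m>\ell$ the relation $(m-\ell)v_1\,w_m=-\sum_{j=\ell}^{m-1}(jv_{m-j+1}+f_{m-j})w_j$ determines $w_m$ uniquely since $(m-\ell)v_1$ is an invertible constant; this produces the desired element of $W(\ell)$.

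For part (ii), assuming $f_0+\ell v_1=0$, I would first solve the $t^m$-coefficient equation for $u_m$ whenever $m\ne\ell$; since $mv_1+f_0=(m-\ell)v_1$ is invertible in $\mathbb{C}[[s]]$ this yields exactly the recursion displayed in the statement, so in particular $u_0,\dots,u_{\ell-1}$ are forced, and an induction on $m$ shows each is a $\mathbb{C}[[s]]$-linear combination of $g_0,\dots,g_m$ whose coefficients are built only from $(v_j)_{1\le j\le\ell+1}$ and $(f_j)_{1\le j\le\ell}$. The sole remaining equation is the $t^\ell$-coefficient: the $u_\ell$-term drops out, so it reads $\sum_{j=0}^{\ell-1}(jv_{\ell-j+1}+f_{\ell-j})u_j=g_\ell$, and substituting the forced expressions for $u_0,\dots,u_{\ell-1}$ turns it into a relation $c_\ell(s)g_0(s)+\dots+c_0(s)g_\ell(s)=0$ with $c_0=1$ and the $c_k$ depending only on the listed $v_j,f_j$. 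This relation is necessary, since any solution must carry the forced values $u_0,\dots,u_{\ell-1}$, and sufficient, since when it holds one may prescribe $u_\ell(s)$ arbitrarily and then run the recursion for $m>\ell$; the recursion also makes the solution unique once $u_\ell$ is fixed. The degenerate case $\ell=0$ is covered by the same formulas, the constraint then being simply $c_0g_0=g_0=0$ with $u_0$ the free parameter.

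The computation is elementary throughout; the only place I expect to need care is the index bookkeeping in the Cauchy product — in particular the reindexing that fuses the contribution of $V\partial_t u$ with that of $Fu$ into the single clean triangular form above — and the verification that the exceptional index $m=\ell$ produces exactly one linear constraint among the $g_j$ rather than obstructing the recursion at any other step.
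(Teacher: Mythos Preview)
Your proposal is correct and follows essentially the same approach as the paper: both arguments expand $(V\partial_t+F)u$ in powers of $t$, obtain the triangular recursion $(mv_1+f_0)u_m+\sum_{j<m}(jv_{m-j+1}+f_{m-j})u_j=g_m$, and read off both the quantisation condition $f_0+\ell v_1=0$ and the solvability constraint at level $m=\ell$ from the vanishing of the diagonal factor there. Your write-up is in fact slightly tidier than the paper's in the ``only if'' part of (i), since you exploit $w_j=0$ for $j<\ell$ directly rather than writing out the first few equations case by case.
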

\begin{proof}
  Let us start with the homogeneous equation. We look for a solution
  $u(s,t)$ in the form
  \[ \displaystyle u(s,t) = \sum_{m\geq 0} u_{m}(s) t^{m}\,, \] of the
  equation
  \[ \left(\sum_{m\geq 1} v_{m}(s) t^{m} \right) \left( \sum_{m\geq 1}
      m u_{m}(s) t^{m-1}\right)+\left(\sum_{m\geq 0} f_{m}(s) t^{m}
    \right) \left( \sum_{m\geq 0} u_{m}(s) t^{m}\right)=0\,. \] For
  arbitrary $k\in \NN$, we get the equations corresponding to $t^{k}$
  \begin{equation*}
    (kv_1+f_0)u_{k}(s)  +\sum_{j=0}^{k-1} ( jv_{k-j+1}(s)+f_{k-j}(s))u_{j}(s)  =0\,.
  \end{equation*}
  Let us write the first three equations
  \begin{align*}
    &t^0: \qquad && f_0 u_0(s) =0\,, \\ 
    &t^{1}: \qquad && \left[ v_1+f_0 \right]u_1(s) + f_1(s) u_0(s)=0\,,\\
    &t^{2}: \qquad && \left[ 2v_1+f_0 \right]u_2(s) + f_2(s) u_0(s) + \left( v_2(s)+f_1(s) \right) u_1(s) =0\,.
  \end{align*}
  For $\ell \in \NN$, consider a non-zero solution $u$ in
  $W(\ell)$. Then $u_{k}(s)=0$ for $0\leq k \leq \ell-1$ and
  $u_{\ell}(s)\neq 0$. We have $\ell v_1 + f_0 =0$. Now, if
  $\ell v_1 + f_0 =0$ , then for all $k \neq \ell$, we get
  \begin{align*}
    u_{k}(s) =-\frac{  \sum_{j=0}^{k-1} ( jv_{k-j+1}(s)+f_{k-j}(s))u_{j}(s) }{(k-\ell) v_1}\,. 
  \end{align*}
  We consider two cases:
  \begin{enumerate}[a)]
  \item $\ell=0$. It leads to $f_0=0$. Since the first equation is
    $f_0 u_0(s) =0$, we can choose any $u_{0}(s)\neq 0$, and we have
    \[ u_{k}(s) = -\frac{ \sum_{j=0}^{k-1} (
        jv_{k-j+1}(s)+f_{k-j}(s))u_{j}(s)}{kv_1},\] for all
    $k \geq 1$.
  \item $\ell \neq 0$. Then $f_0$ has to be non-zero. From the first
    equation, it leads to $u_0(s) =0$. From the recursion formula, it
    implies that $u_k(s) = 0 $ for all $k<\ell$. For $k=\ell$, we get
    the equation
    \[ (k-\ell)v_1(s) u_{\ell}(s) =0 ,\] we can choose any
    $u_{\ell}(s) \neq 0$.
  \end{enumerate}
  In any case, we always get non-trivial solutions $u$ in the set
  $W(\ell)$.

  Now we consider the inhomogeneous case:
  \[ \left(\sum_{m\geq 1} v_{m}(s) t^{m} \right) \left( \sum_{m\geq 1}
      m u_{m}(s) t^{m-1}\right)+\left(\sum_{m\geq 0} f_{m}(s) t^{m}
    \right) \left( \sum_{m\geq 0} u_{m}(s) t^{m}\right)=\sum_{m \geq
      0} g_{m}(s) t^{m}\,. \] For all $k\in \NN$, the equations
  corresponding to $t^k$ are
  \[ (kv_1+f_0)u_{k}(s) +\sum_{j=0}^{k-1} (
    jv_{k-j+1}(s)+f_{k-j}(s))u_{j}(s) =g_k(s)\,.\] Assume that there
  exists $\ell \in \NN$ such that $\ell v_1 +f_0=0$, these equations
  become
  \begin{equation}\label{QT EQ k}
    (k-\ell)v_1 u_{k}(s)  +\sum_{j=0}^{k-1} ( jv_{k-j+1}(s)+f_{k-j}(s))u_{j}(s)  =g_k(s)\,.
  \end{equation}
  The equation corresponding to $t^\ell$ is
  \begin{equation*}
    (\ell -\ell)u_{\ell}(s) + \sum_{j=0}^{\ell-1} ( jv_{\ell-j+1}(s)+f_{\ell-j}(s))u_{j}(s) = g_{\ell}(s)\,.
  \end{equation*}
  The inhomogeneous equation has solutions in the form \eqref{QT EQ U}
  if and only if
  \[ g_{\ell}(s)-\sum_{j=0}^{\ell-1} (
    jv_{\ell-j+1}(s)+f_{\ell-j}(s))u_{j}(s)=0.\] This relation is in
  the form \eqref{inhomogenous cond} after computing $(u_{k}(s))$
  as a function of $g_k(s)$, $f_k(s)$ and $v_k(s)$ for
  $k =0,\ldots,\ell-1$. For example, we can compute $c_1(s)$ by
  collecting all coefficients connecting with $g_{\ell-1}(s)$. Notice
  that $g_{\ell-1}(s)$ only appears in the formula of $u_{\ell-1}$ and
  its coefficient in $u_{\ell-1}$ is $\frac{-1}{v_1}$, then we can
  compute
  \[ c_1(s) = - \left[ (\ell-1)v_2(s) + f_1(s)\right]\frac{-1}{v_1} =
    \frac{(\ell-1)v_2(s) + f_1(s)}{v_1}\,. \] The statement at the end
  of the lemma is obtained from \eqref{QT EQ k}.
\end{proof}

\subsubsection{The first transport equation}
We consider the first transport equation
\begin{equation}\label{Transport equation 1}
  v(z,w)\partial_{w} A^{(0)}(z,w) + \left(\tilde{\mathcal{B}}(z,w)-\mu_0\right)A^{(0)}(z,w)=0\,,
\end{equation}
where $v(z,w)= 4\tilde{\mathcal{E}}(z,w)\left(f'(z)+2\partial_{z} \tilde{\Psi}(z,w) \right)\,.$\\
Let $w(z)$ be the formal series given by Lemma \ref{function w}. Using
the change of variables $(z,w)=(z,y+w(z))$, which is allowed in
$\mathbb{C}[[(z,w)]]$ since $w_0=0$, we get the equation
\begin{equation}\label{QT Transport equation 1}
  V(z,y) \partial_{ y} A^{(0)}(z, y+w(z))+ F(z,y) A^{(0)}(z, y+w(z)) = 0\,,
\end{equation}
where
\[ V(z,y) = v(z,y+w(z))=
  4\tilde{\mathcal{E}}(z,y+w(z))\left(f'(z)+2\partial_{z}
    \tilde{\Psi}(z,y+w(z)) \right)\,,\] and
\[ F(z,y) = \tilde{\mathcal{B}}(z,y+w(z)) -\mu_{0}\,.\]
\subsubsection{Choosing formal series \texorpdfstring{$f$}{} and
  determining $\tilde{S}$}
We recall that $\tilde{S}$ is given in $\eqref{QT EQ Choice S}$. The
formal series $\tilde{\Psi}(z,w)$ is given by Lemma \ref{Poisson
  Lemma} and the formal series $f(z)\in \mathbb{C}[[z]]$ has to be
determined. We choose $f(z)$ such that we can apply Lemma
\ref{ODE}, \emph{i.e.},
\begin{equation}\label{QT EQ f}
  f'(z)+2\partial_{z}\tilde{\Psi}(z,w(z))=0 \,.
\end{equation}
According to Lemma \ref{Poisson Lemma},
\[ \tilde{\Psi}(z,w) = \frac{e^{2\eta(0)}b_0}{4}zw+ \sum_{m+n\geq 3}
  \psi_{mn}z^m w^n\,,\] so that
\[\partial_{z}\tilde{\Psi}(z,w(z))= \frac{e^{2\eta(0)}b_0}{4}w(z) +
  \sum_{\substack{m+n\geq 3\\ m\geq 1}} m\psi_{mn}z^{m-1} (w(z))^{n}
  \,.\] Let $ f(z) = \sum_{k \geq 0} \hat{f}_{k} z^{k} $. Since there
is no restriction for $f(0)$, we can choose $\hat{f}_{0}=0$. Moreover,
a straightforward computation using Lemma \ref{function w} gives
\[ \hat{f}_1=0 \,, \qquad
  \hat{f}_{2}=\frac{e^{2\eta(0)}b_0}{4}\frac{\sqrt{\alpha}-\sqrt{\gamma}}{\sqrt{\alpha}+\sqrt{\gamma}}\,.\]
Now, $\tilde{S}(z,w)$ is completely determined and
\begin{equation}\label{Eq SM fun S}
  \tilde{S}(z,w) = \frac{e^{2\eta(0)}b_0}{4} zw +\frac{e^{2\eta(0)}b_0}{4}\frac{\sqrt{\alpha}-\sqrt{\gamma}}{\sqrt{\alpha}+\sqrt{\gamma}} z^2 + \sum_{m+n\geq 3} [\tilde{S}]_{mn}z^m w^n\,.
\end{equation}
\subsubsection{Solving the first transport equation}
Let us come back to the transport equation \eqref{Transport equation
  1}. We write
$$\displaystyle  V(z,y) = \sum_{m\geq0}v_{m}(z) {y}^{m}\text{ and } \displaystyle F(z,y) = \sum_{m\geq0} f_{m}(z) y^{m}\,.$$
We now check the assumptions of Lemma \ref{ODE}. From \eqref{QT EQ f},
we have
\begin{equation*}
  v_{0}(z) = V(z,0)= 4\tilde{\mathcal{E}}(z,w(z))\left(f'(z)+2 \partial_{z}\tilde{\Psi}(z,w(z)) \right) =0\,.
\end{equation*}
From Lemma \ref{Poisson Lemma}, we obtain
\begin{eqnarray*}
  v_{1}(z) &=& \partial_{y} V(z,0)\\
           &=& 4 \partial_{w} \tilde{\mathcal{E}}(z,y+w(z)) \left(f'(z)+2\partial_{z} \tilde{\Psi}(z,y+w(z)) \right)\bigg|_{y=0} \\
           &\,& + 8\tilde{\mathcal{E}}(z,y+w(z))\partial_{w} \partial_z  \tilde{\Psi}(z,y+w(z))\bigg|_{y=0} \\
           &=& 2 \tilde{\mathcal{B}}(z,w(z))=2b_0\,.
\end{eqnarray*}
Finally, from \eqref{function w}, we get
\begin{equation*}
  f_{0}(z) = F(z,0)= \tilde{\mathcal{B}}(z,w(z))-\mu_0 = b_0 - \mu_0\,.
\end{equation*}
Thanks to Lemma \ref{ODE} for the homogeneous case, we see that \eqref{QT
  Transport equation 1} has solutions in the form
$\sum_{m\geq 0} A^{(0)}_{m}(z) y^{m}$ with $A^{(0)}_{0}(z)\neq 0$
if and only if $f_{0}(z) =0$, i.e. $\mu_0 = b_0$. In this case, the
solution of the first transport equation \eqref{Transport equation 1}
is in the form
$A^{(0)}(z,w)= \sum_{m\geq 0} A^{(0)}_{m}(z) (w-w(z))^{m}$, where
$ A^{(0)}_{m}(z)$ can be computed for $m\geq 1$ thanks to
\[ A^{(0)}_{m}(z) = -\frac{ \sum_{j=0}^{m-1} (
    jv_{m-j+1}(z)+f_{m-j}(z)) A^{(0)}_{j}(z) }{2mb_0}\,.\] The series
$A_{0}^{(0)}(z)$ will be determined later.
\subsubsection{The second transport equation}
We consider the second transport equation
\begin{equation}\label{Transport equation 2}
  \left(v(z,w)\partial_{w}+ \tilde{B}(z,w)-\mu_0 \right) A^{(1)} = (\mu_1 + 4\tilde{\mathcal{E}}(z,w)\partial_{z}\partial_{w}) A^{(0)}\,.
\end{equation}
By changing variables $(z,w)=(z,y+w(z))$, we obtain
\begin{align}\label{QT Transport equation 2}
  V(z,y) \partial_{ y} A^{(1)}(z, y+w(z))+ F(z,y) A^{(1)}(z, y+w(z)) = G^{(1)}(z,y) \,,
\end{align}
where $G^{(1)}(z,y)=\left(\mu_1+4 \tilde{\mathcal{E}}(z,y+w(z))\partial_{z}\partial_{w} \right)A^{(0)}(z,y+w(z))$.\\
We write $G^{(1)}(z,y)=\sum_{m\geq 0} g_{m}^{(1)}(z)y^{m}$. Since
$v_0(z)=0$, $v_1(z)=2b_0 \neq 0$ and $f_0(z)=0$, and thanks to Lemma
\ref{ODE} (with $\ell=0$), the equation \eqref{QT Transport equation
  2} has solutions if and only if $g_0^{(1)}(z)=0$, i.e.,
\[
  G^{(1)}(z,0)=(\mu_1+4\tilde{\mathcal{E}}(z,w(z))\partial_{z}\partial_{w})
  A^{(0)}(z,w(z))=0 \,,\] or, equivalently,
\[ \mu_1 A^{(0)}_{0}(z) +4\tilde{\mathcal{E}}(z,w(z))
  \left(\partial_{z}A^{(0)}_{1}(z) - 2A^{(0)}_{2}(z)w'(z)\right)=0.\]
Since
\[ A^{(0)}_{1}(z)= - \frac{f_{1}(z)}{2b_0}A^{(0)}_{0}(z)\,,\] and
\begin{align*}
  A^{(0)}_{2}(z)&=- \frac{(v_{2}(z)+f_{1}(z))A^{(0)}_{1}(z)+f_{2}(z)A_{0}^{(0)}(z)}{4b_0}\\
                &= \frac{1}{8b_0^2} f_{1}(z)(v_{2}(z)+f_{1}(z))A_{0}^{(0)}(z)-\frac{1}{4b_0}f_{2}(z)A_{0}^{(0)}(z)\,,
\end{align*}
the equation related to $\mu_1$ can be rewritten as
\begin{equation}\label{Eq of a00}
  V(z)\partial_{z} A^{(0)}_{0}(z)+F(z)A^{(0)}_{0}(z)=0\,, \qquad V(z):=\frac{2}{b_0}\tilde{\mathcal{E}}(z,w(z))f_{1}(z)\,,
\end{equation}
with
\[ F(z):=\tilde{\mathcal{E}}(z,w(z)) \left(
    \frac{2}{b_0}f_1'(z)+\frac{1}{b_0^2}f_1(z)\left(f_1(z)+v_2(z)\right)w'(z)+\frac{2}{b_0}f_2(z)w'(z)\right)-\mu_1\,.\]
\begin{notation}
  Below, with a given formal series
  $X(z)=\sum_{k\geq 0} x_k z^{k} \in C[[z]]$, we use the notation
  $[X(z)]_{k}$ to extract the coefficient of $z^{k}$, so that
  $[X(z)]_{k}=x_{k}$.
\end{notation}
We have
\begin{align*}
  f_1(z)&=\partial_2 \tilde{\mathcal{B}}(z,w(z))= \sum_{\substack{m \geq 0\\ n\geq 1}} n \tilde{b}_{mn}z^{m} \left( w(z)\right)^{n-1}\,,\\
  f_2(z)&=\frac{1}{2}\partial_2^2 \tilde{\mathcal{B}}(z,w(z)) =\sum_{\substack{m \geq 0\\ n\geq 2}} \frac{n(n-1)}{2} \tilde{b}_{mn}z^{m} \left( w(z)\right)^{n-2}\,.
\end{align*}
It is easy to check that
\begin{align*} [V(z)]_{0}=\frac{2}{b_0}
  [\tilde{\mathcal{E}}(z,w(z))]_{0}\left[f_{1}(z)\right]_{0}=0 \qquad
  (\text{since }\tilde{b}_{01} =0)\,,
\end{align*}
and
\begin{align*}
  [V(z)]_{1}&=\frac{2}{b_0} [\tilde{\mathcal{E}}(z,w(z))]_{0}\left[f_{1}(z)\right]_{1}+ \frac{2}{b_0} [\tilde{\mathcal{E}}(z,w(z))]_{1}\left[f_{1}(z)\right]_{0}\\
            &=\frac{2e^{-2\eta(0)}}{b_0} \left(2\tilde{b}_{02}w_1+\tilde{b}_{11} \right)\\
            &= \frac{2e^{-2\eta(0)}}{b_0} \left( \frac{1}{2}\left(\alpha-\gamma \right)\frac{\sqrt{\gamma}-\sqrt{\alpha}}{\sqrt{\gamma}+\sqrt{\alpha}}+\frac{1}{2}(\alpha+\gamma) \right)\\
            &= \frac{2e^{-2\eta(0)}\sqrt{\alpha\gamma}}{b_0} \,.
\end{align*}
Furthermore, we can compute
\begin{equation*} [F(z)]_0 =
  \frac{2e^{-2\eta(0)}}{b_0}\left(\tilde{b}_{11}+
    \tilde{b}_{02}w_1\right)-\mu_1=
  \frac{e^{-2\eta(0)}(\sqrt{\gamma}+\sqrt{\alpha})^2}{2b_0}-\mu_1\,.
\end{equation*}
Applying Lemma \ref{ODE}, we see that \eqref{Eq of a00} has solutions
if and only if there exists $\ell \in \NN $ such that
\begin{align*}
  \mu_1 =e^{-2\eta(0)}\left(2\ell \frac{\sqrt{\alpha\gamma}}{b_0}+ \frac{(\sqrt{\gamma}+\sqrt{\alpha})^2}{2b_0}\right).
\end{align*}
Then, $A_0^{(0)}$ can be determined as a formal series whose $\ell$
first terms vanish and $[ A_0^{(0)}(z)]_{\ell} = 1$.

Once the right-hand side of \eqref{Transport equation 2} is
determined, we can find a particular solution in the form
\[ \sum_{m\geq 0} \alpha^{(1)}_m(z) (w-w(z))^{m}\,,\] where the
$\alpha^{(1)}_m(z)$ are determined by a recursion formula starting
with $\alpha^{(1)}_0(z)=0$. The solution of \eqref{Transport equation
  2} takes the form
\begin{equation}
  A^{(1)}(z,w) = \sum_{m\geq 0} \alpha^{(1)}_m(z) (w-w(z))^{m}+ \sum_{m\geq 0} A^{(1)}_{m}(z) (w-w(z))^{m}\,,
\end{equation}
where $ \sum_{m\geq 0} A^{(1)}_{m}(z) (w-w(z))^{m}$ is the solution of
the first transport equation \eqref{Transport equation 1}, and in
which only $ A^{(1)}_{0}(z)$ remains to be determined.
\subsubsection{Induction}
Let $p\in \NN\setminus \{0\}$. We assume that the sequences
$(\mu_j)_{0\leq j\leq p}$ and $(A^{(j)})_{0\leq j\leq p-1}$ are
determined from the first $(p+1)$ transport equations:
\begin{align*}
  &\left(v(z,w)\partial_{w}+ \tilde{B}(z,w)-\mu_0 \right) A^{(0)}=0\\
  &\left(v(z,w)\partial_{w}+ \tilde{B}(z,w)-\mu_0 \right) A^{(1)} = (\mu_1 + 4\tilde{\mathcal{E}}(z,w)\partial_{z}\partial_{w}) A^{(0)}\\
  &\quad .........................................................\\
  &\left(v(z,w)\partial_{w}+ \tilde{B}(z,w)-\mu_0 \right) A^{(p)}=(\mu_1 + 4\tilde{\mathcal{E}}(z,w)\partial_{z}\partial_{w}) A^{(p-1)}+ \sum_{j=2}^{p} \mu_{j} A^{(p-j)}\,.
\end{align*}
Let us also assume that the $A^{(j)}$'s, for $j\in\{1,...,p\}$, are in
the form
\[A^{(j)}(z,w) = \sum_{m\geq 0} \alpha^{(j)}_m(z) (w-w(z))^{m}+
  \sum_{m\geq 0} A^{(j)}_{m}(z) (w-w(z))^{m}\,, \] where
\begin{enumerate}[\rm (i)]
\item $ \sum_{m\geq 0} \alpha^{(j)}_m(z) (w-w(z))^{m}$ is a particular
  solution of the $j$-th transport equation and satisfies
  $\alpha^{(j)}_0(z)=0 $ in $\mathbb{C}[[z]]$ for $j\in \{1,...,p\}$.
\item $ \sum_{m\geq 0} A^{(j)}_{m}(z) (w-w(z))^{m}$, which is a
  solution of the first transport equation \eqref{Transport equation
    1}, is also determined and satisfies $[A^{(j)}_{0}(z)]_{\ell}=0$
  in $\mathbb{C}$ for $j\in \{1,...,p-1\}$ .
\end{enumerate}
At the rank $p$, only $A^{(p)}_{0}(z) $ needs to be determined.

Let us now consider the equation satisfied by $A^{(p+1)}$:
\begin{equation}\label{Eq of A_p+1}
  \left( v\, \partial_{w}  + \tilde{B}-b_0 \right)A^{(p+1)}= (\mu_1+4\tilde{\mathcal{E}}\partial_{z}\partial_{w} )A^{(p)}+ \mu_{p+1} A^{(0)} + \sum_{j=2}^{p} \mu_{j} A^{(p+1-j)}.
\end{equation}
As before, the fact that this equation has solutions determines the
value of $\mu_{p+1}$ and the series $A^{(p)}_{0}(z)$. We leave these
details to the reader.

\subsubsection{Conclusion}
From the above analysis, we get the following theorem.
\begin{theorem}[WKB construction for
  $\mathcal{L}_{h,\mathcal{M}}$]\label{QT TH WKB}
  For all $\ell\in \NN$, there exist
  \begin{enumerate}[\rm (i)]
  \item a smooth complex-valued function $T$ on $U$ satisfying
    \begin{equation}\label{Eq SM Re S}
      \mathrm{Re}(T)(q) = \frac{e^{2\eta(0)}b_0}{2}\left( \frac{\sqrt{\alpha}}{\sqrt{\alpha}+\sqrt{\gamma}}q_1^2 + \frac{\sqrt{\gamma}}{\sqrt{\alpha}+\sqrt{\gamma}}q_2^2 \right)+\mathcal{O}(\Vert q\Vert^3)\,,
    \end{equation}
  \item a sequence of smooth complex-valued function
    $(a_{\ell,j})_{j\in \NN}$ on $U$,
  \item a sequence of real numbers $(\mu_{\ell,j})_{j\in N} $
    with
    \[ \mu_{\ell,0}=b_{0},\qquad \mu_{\ell,1}=
      e^{-2\eta(0)}\left(2\ell \frac{\sqrt{\alpha\gamma}}{b_0}+
        \frac{(\sqrt{\gamma}+\sqrt{\alpha})^2}{2b_0}\right)\,,\]
  \item a sequence of a flat function
    $(\mathit{f}_{j})_{j\in \NN}$ on $U$\,,
  \end{enumerate}
  such that, for all $J\in \NN$,
  \[e^{T/h}\left(\mathcal{L}_{h,\mathcal{M}}
      -h\sum_{j=0}^{J}\mu_{\ell,j}h^{j}
    \right)\left(e^{-T/h}\sum_{j=0}^{J} a_{\ell,j}h^{j} \right)=
    \sum_{j=0}^{J+1} h^{j}\mathit{f}_{j} +\mathcal{O}(h^{J+2})\,, \]
  local uniformly in $U$.
\end{theorem}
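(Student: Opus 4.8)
The plan is to turn the formal construction carried out in the previous subsections into a genuine smooth construction, using Borel summation to produce honest smooth functions from the formal series. Concretely, the formal solution of the eikonal equation produced $\tilde S(z,w)$ as an element of $\mathbb{C}[[z,w]]$, and by Lemma \ref{ODE} together with the choice \eqref{QT EQ f} of $f(z)$, the transport equations admit formal solutions $A^{(j)}\in\mathbb{C}[[z,w]]$ exactly when the resonance conditions fix $\mu_{\ell,0}=b_0$ and $\mu_{\ell,1}=e^{-2\eta(0)}\bigl(2\ell\frac{\sqrt{\alpha\gamma}}{b_0}+\frac{(\sqrt\gamma+\sqrt\alpha)^2}{2b_0}\bigr)$, and so on by the induction of the previous subsubsection. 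First I would invoke Borel's lemma: there is a smooth function $T$ defined on a neighbourhood $U$ of $0$ in $\RR^2$ whose Taylor series at $0$ equals $\hat S$ (equivalently, after the linear change of variables $(q_1,q_2)=(\frac{z+w}{2},\frac{z-w}{2i})$, whose associated formal series is $\tilde S$), and similarly smooth functions $a_{\ell,j}$ on $U$ whose Taylor series at $0$ are the $\hat A^{(j)}$. The real-part formula \eqref{Eq SM Re S} is then read off from the quadratic part of $\tilde S$ computed in \eqref{Eq SM fun S}: transforming $\frac{e^{2\eta(0)}b_0}{4}zw+\frac{e^{2\eta(0)}b_0}{4}\frac{\sqrt\alpha-\sqrt\gamma}{\sqrt\alpha+\sqrt\gamma}z^2$ back to the $q$ variables and taking real parts gives precisely the stated expression modulo $\mathcal{O}(\|q\|^3)$.

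Next I would compute the error. Set $R_{\ell,J}:=e^{T/h}\bigl(\mathcal{L}_{h,\mathcal{M}}-h\sum_{j=0}^J\mu_{\ell,j}h^j\bigr)\bigl(e^{-T/h}\sum_{j=0}^J a_{\ell,j}h^j\bigr)$. Using the conjugated operator decomposition $\mathcal{L}_{h,A}^S=E_0^S+hE_1^S-h^2\Delta$ established above (with $S$ replaced by the honest smooth function $T$ and $A$ chosen divergence-free as in Subsection \ref{Subsection MP}), expand $R_{\ell,J}$ in powers of $h$ and collect. The point is that the coefficient of $h^j$ in this expansion is, up to the gauge factor $e^{i\theta/h}$ from \eqref{eq.gauge}, exactly the left-hand side minus right-hand side of the $j$-th transport equation of the formal system, evaluated at the smooth functions $T,a_{\ell,j}$ rather than at their formal series. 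Since $T$ and the $a_{\ell,j}$ agree with the formal solutions to infinite order at $0$, and the eikonal/transport equations are satisfied \emph{formally} at $0$, the Taylor series at $0$ of each such coefficient vanishes identically; hence each coefficient is a smooth function flat at $0$, which I call $f_j$ (for $j=0,\dots,J+1$; note $f_0=E_0^T$ applied to $a_{\ell,0}$ is flat because the eikonal equation holds to infinite order, using that $E_0^T=e^{-2\eta}[(-A_1+i\partial_{q_1}T)^2+(-A_2+i\partial_{q_2}T)^2]$ has a Taylor series at $0$ equal to that of $E_0^{\hat S}$). The remaining terms, coming from $-h^2\Delta$ acting on $a_{\ell,J-1}h^{J-1}+a_{\ell,J}h^J$ and from the truncation of the $\mu$-sum, are manifestly $\mathcal{O}(h^{J+2})$ uniformly on compact subsets of $U$, because $T$ has a nonnegative real part bounded below away from $0$ on such compacts so that $e^{T/h}e^{-T/h}$-type cancellations are harmless and all the smooth coefficients are bounded there. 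This yields the displayed identity with the stated $f_j$.

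The final step is to transport everything back through the isothermal chart. Undoing the gauge transformation \eqref{eq.gauge} replaces $T$ by $T-i\theta$ (still smooth, same real part) and $a_{\ell,j}$ by $e^{i\theta/h}a_{\ell,j}$ — but to keep $T$ a genuine WKB phase one instead absorbs $\theta$ and states the result for $\mathcal{L}_{h,\mathcal{M}}$ directly, which is the clean formulation given. Then \eqref{QT EQ relation}, $\mathscr{L}_{h,\A}=\phi^*\mathcal{L}_{h,\mathcal{M}}$, pulls the whole identity back to $\Omega$, giving Theorem \ref{IN TH WKB} with $P:=\phi^*T$, $U_{\ell,j}:=\phi^*a_{\ell,j}$, $F_{\ell,j}:=\phi^*f_j$, and the value $\mu_{\ell,1}=2\ell\frac{\sqrt{\det H}}{b_0}+\frac{(\mathrm{Tr}\,H^{1/2})^2}{2b_0}$ after checking that in the isothermal gauge the factor $e^{-2\eta(0)}$ combined with $b_0=\B(p_0)$, $\alpha,\gamma$ matches $\det H$ and $\mathrm{Tr}\,H^{1/2}$ for $H=\frac12\mathrm{Hess}\,\B(p_0)$ — a short computation with the conformal factor. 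The main obstacle I anticipate is not any single estimate but the bookkeeping in the error expansion: one must verify carefully that \emph{every} term of order $\le h^{J+1}$ in $R_{\ell,J}$ is either flat at $0$ (because it reproduces a formally-satisfied transport equation) or already $\mathcal{O}(h^{J+2})$, and that replacing the formal series by Borel-summed smooth functions does not generate spurious non-flat contributions — this is where the precise matching "Taylor series of $T$ = $\tilde S$" and the structure $E_0^T+hE_1^T-h^2\Delta$ with $\nabla\cdot A=0$ do all the work.
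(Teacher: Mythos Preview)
Your approach is essentially the paper's: Borel-sum the formal series $\hat S$ and $\hat A^{(j)}$ to smooth functions, expand the conjugated operator in powers of $h$, and observe that each coefficient of order $\le h^{J+1}$ is a formally-satisfied eikonal or transport equation evaluated on smooth data, hence flat at $0$.

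The one point where your write-up is muddled is the gauge. You define $R_{\ell,J}$ using $\mathcal{L}_{h,\mathcal{M}}$ but then invoke the decomposition $E_0^S+hE_1^S-h^2\Delta$, which was derived for $\mathcal{L}_{h,A}$ and relies on $\nabla\cdot A=0$; this need not hold for $\mathcal{M}$. The paper keeps these steps separate: first prove the identity for $\mathcal{L}_{h,A}$ with phase $S$ (the Borel sum of $\hat S$), obtaining
\[
  e^{S/h}\Bigl(\mathcal{L}_{h,A}-h\textstyle\sum_{j=0}^{J}\mu_{\ell,j}h^{j}\Bigr)\Bigl(e^{-S/h}\textstyle\sum_{j=0}^{J}a_{\ell,j}h^{j}\Bigr)=\sum_{j=0}^{J+1}h^{j}f_{j}+\mathcal{O}(h^{J+2}),
\]
and only then use \eqref{eq.gauge}, $\mathcal{L}_{h,A}=e^{i\theta/h}\mathcal{L}_{h,\mathcal{M}}e^{-i\theta/h}$, to transfer this verbatim to $\mathcal{L}_{h,\mathcal{M}}$ with phase $T:=S+i\theta$ and the \emph{same} amplitudes $a_{\ell,j}$. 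Note the sign: the new phase is $S+i\theta$, not $S-i\theta$, and the amplitudes are not multiplied by $e^{i\theta/h}$. Since $\theta$ is real, $\mathrm{Re}(T)=\mathrm{Re}(S)$, so \eqref{Eq SM Re S} follows from \eqref{Eq SM fun S} as you say. Your final paragraph (pull-back via $\phi$, matching $\mu_{\ell,1}$ with $\det H$ and $\mathrm{Tr}\,H^{1/2}$) is actually the proof of Theorem~\ref{IN TH WKB}, not of the present statement.
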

\begin{proof}
  We have constructed  the formal series $\tilde{S}(z,w),\, (A^{(j)}(z,w))_{j\in \NN}$ in
  $\mathbb{C}[[z,w]]$ and $\mu_{\ell,j}$, all depending on $\ell$. By
  coming back to the original variables $(q_1,q_2)$, we obtain formal
  series in $\mathbb{C}[[q_1,q_2]]$. Applying Borel's Lemma, we get a smooth complex-valued functions $S$
  and $(a_{j})_{j\in \NN}$ so that for each $J\in \NN$, there exist
  flat functions $\mathit{f}_{0},\mathit{f}_1,...,\mathit{f}_{J+1}$
  and a smooth function $F$ on $\RR^2$ ($F$ is a polynomial in $h$
  whose coefficients are smooth functions depending on $a_j$ and
  $\mu_{\ell,j}$) such that
  \[ \left(\mathcal{L}_{h,A}^{S} -h\sum_{j=0}^{J}\mu_{\ell,j}h^{j}
    \right)\left(\sum_{j=0}^{J} a_{\ell,j} h^{j}\right)=
    \sum_{j=0}^{J+1} h^{j}\mathit{f}_{j} +h^{J+2}F\,.\] Note that
  $\mathcal{L}_{h,A}^{S}=e^{S/h}\mathcal{L}_{h,A} e^{-S/h} $ so that
  \begin{equation}
    e^{S/h}\left(\mathcal{L}_{h,A} -h\sum_{j=0}^{J}\mu_{\ell,j}h^{j} \right)\left(e^{-S/h}\sum_{j=0}^{J} a_{\ell,j}h^{j} \right)= \sum_{j=0}^{J+1} h^{j}\mathit{f}_{j} +h^{J+2} F\,.
  \end{equation}
  We recall that we used the WKB method for the operator
  $\mathcal{L}_{h,A}$, with the special magnetic potential $A$, see
  Section \ref{Subsection MP} and especially \eqref{eq.gauge}. Letting
  $T=S+i\theta$, we get
  \begin{equation}\label{QT EQ WKB F}
    e^{T/h}\left(\mathcal{L}_{h,\mathcal{M}} -h\sum_{j=0}^{J}\mu_{\ell,j}h^{j} \right)\left(e^{-T/h}\sum_{j=0}^{J} a_{\ell,j}h^{j} \right)= \sum_{j=0}^{J+1} h^{j}\mathit{f}_{j}+h^{J+2}F\,.
  \end{equation}
  Since $\mathrm{Re}(T)= \mathrm{Re}(S)$, the Taylor expansion of
  $\mathrm{Re}(T)$ directly comes from \eqref{Eq SM fun S}.
\end{proof}
\begin{proof}[Proof of Theorem \ref{IN TH WKB}]
  Let us recall \eqref{QT EQ relation}. Consider a fixed cut-off
  function $\chi_1$ equal to $1$ in a neighbourhood of $0$ and with
  support in $U$. Then, we set
  \begin{enumerate}
  \item[i)] $P = \phi^{*} (\chi_1 T)$,
  \item[ii)] $U_{\ell,j} = \phi^{*} (\chi_1 a_{\ell,j})$ for all
    $(\ell,j)\in \NN^2$,
  \end{enumerate}
  Using Appendix \ref{app.B}, we deduce Theorem \ref{IN TH WKB}.

\end{proof}

In order to prove Corollary \ref{IN TH Exponential Smooth}, we just
need to shrink the supports of functions in Theorem \ref{IN TH WKB} by
inserting appropriate cutoff functions and use the fact that there
exist $R>0$ and $\delta>0$ such that
\begin{equation}\label{Eq SM fun S estimate}
  \mathrm{Re}(T)(q)\geq \delta \Vert q \Vert^2  \text{ for all } q\in D(0;R)\,.
\end{equation}
This coercivity of the phase implies that the Ansatz has an
exponential decay.

\section{Proof of Theorem \ref{IN TH WKB R} and of its consequences}\label{sec.4}

\subsection{Radial magnetic Laplacian and Fourier decomposition}
Consider the polar coordinates
\begin{equation}\label{QT EQ Radial Coor}
  \psi :\left\{ 
    \begin{aligned}
      \RR^{+}\times \RR/2 \pi\mathbb{Z} &\to \RR^2\setminus \{0\}\\
      (r,\theta)&\mapsto (r\cos \theta, r\sin \theta ) = (q_1,q_2)
    \end{aligned}\right.\,.
\end{equation}
The following proposition is easy to get.
\begin{proposition}
  Letting $\A=A_1 \dd q_1 + A_2 \dd q_2$, we have
  \[\mathbf{A}=A_{r}\dd r+A_{\theta}\dd \theta\,,\]
  with $\displaystyle \tilde{\textup{\A}}=\left(A_{r}, A_{\theta}\right)^\mathrm{T}:=(\dd\psi)^\mathrm{T}\left(A_{1}, A_{2}\right)^\mathrm{T}$, where $\dd \psi$ denotes the Jacobian matrix of $\psi$.\\
  The magnetic Laplacian $\mathscr{L}_{h,\A}$ is unitary equivalent to
  the operator
  \begin{equation}\label{QT EQ ML Radial}
    \mathscr{K}_{h,\tilde{\textup{\A}}} = r^{-2} \left(r(-ih \partial_r -A_r)\right)^2  \  + r^{-2}(-ih \partial_\theta -A_\theta)^2 \,,
  \end{equation}
  whose domain is
  \[\textup{Dom}( \mathscr{K}_{h,\tilde{\textup{\A}}}) = \left\{ w \in
      \textup{L}^2(\RR^{+}\times \RR/2 \pi\mathbb{Z} ,r \dd r \dd
      \theta) : \mathscr{K}_{h,\tilde{\textup{\A}}}w \in
      \textup{L}^2(\RR^{+}\times \RR/2 \pi\mathbb{Z} ,r \dd r
      \dd\theta) \right\}\,.\]
\end{proposition}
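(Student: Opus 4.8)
The plan is to read the passage to polar coordinates as a genuine unitary equivalence, and to track what it does to $\A$ and to the quadratic form $Q_{h,\A}$.

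First I would note that, since $\{0\}$ is Lebesgue-negligible, $\psi$ induces a diffeomorphism $\RR^+\times\RR/2\pi\ZZ\to\RR^2\setminus\{0\}$, and because the Jacobian matrix $\dd\psi$ has determinant $r$, the pull-back $u\mapsto u\circ\psi$ is a unitary operator
\[
 U:\textup{L}^2(\RR^2,\dd q_1\,\dd q_2)\longrightarrow\textup{L}^2(\RR^+\times\RR/2\pi\ZZ,\,r\,\dd r\,\dd\theta)\,.
\]
The first assertion is then merely the computation of a pull-back of a $1$-form: from $\psi^*\dd q_1=\cos\theta\,\dd r-r\sin\theta\,\dd\theta$ and $\psi^*\dd q_2=\sin\theta\,\dd r+r\cos\theta\,\dd\theta$ one gets $\psi^*\A=A_r\,\dd r+A_\theta\,\dd\theta$ with $(A_r,A_\theta)^{\mathrm{T}}=(\dd\psi)^{\mathrm{T}}(A_1,A_2)^{\mathrm{T}}$.

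For the operator, I would argue at the level of quadratic forms. In the coordinates $(r,\theta)$ the Euclidean metric is $\dd r^2+r^2\dd\theta^2$, so that $|G|=r^2$ and $G^{-1}=\mathrm{diag}(1,r^{-2})$; accordingly a $1$-form $\omega_r\,\dd r+\omega_\theta\,\dd\theta$ has squared dual norm $|\omega_r|^2+r^{-2}|\omega_\theta|^2$. Hence, for $v=Uu$,
\[
 Q_{h,\A}(u)=\int_{\RR^+\times\RR/2\pi\ZZ}\Big(\big|(-ih\partial_r-A_r)v\big|^2+r^{-2}\big|(-ih\partial_\theta-A_\theta)v\big|^2\Big)\,r\,\dd r\,\dd\theta\,.
\]
Integrating by parts in $\theta$ (the weight $r$ being $\theta$-independent) and in $r$ against $r\,\dd r$, the self-adjoint operator on $\textup{L}^2(r\,\dd r\,\dd\theta)$ associated with this form is
\[
 r^{-1}(-ih\partial_r-A_r)\big(r(-ih\partial_r-A_r)\,\cdot\,\big)+r^{-2}(-ih\partial_\theta-A_\theta)^2=\mathscr{K}_{h,\tilde{\textup{\A}}}\,,
\]
which is also what results from substituting the above $G$ into \eqref{EQ ML on Manifold}. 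Therefore $U\mathscr{L}_{h,\A}U^{-1}=\mathscr{K}_{h,\tilde{\textup{\A}}}$.

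It then remains to identify $U(\textup{Dom}(\mathscr{L}_{h,\A}))$ with the maximal domain appearing in the statement. Since $\mathscr{L}_{h,\A}$ is essentially self-adjoint on $\mathscr{C}_0^\infty(\RR^2)$ and $\{0\}$ is a removable singularity in dimension two, $\mathscr{K}_{h,\tilde{\textup{\A}}}$ is the closure of its restriction to $\mathscr{C}_0^\infty(\RR^+\times\RR/2\pi\ZZ)$, and this closure coincides with the maximal realization $\{w\in\textup{L}^2(r\,\dd r\,\dd\theta):\mathscr{K}_{h,\tilde{\textup{\A}}}w\in\textup{L}^2(r\,\dd r\,\dd\theta)\}$. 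This last step — matching the domains across the coordinate singularity at the origin — is the only point that requires any care; everything else is bookkeeping.
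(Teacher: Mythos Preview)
Your argument is correct. The paper itself offers no proof, simply prefacing the proposition with ``The following proposition is easy to get,'' so there is nothing to compare against at the level of method; your write-up supplies exactly the routine verification the authors are gesturing at, namely pulling back the $1$-form and the quadratic form through $\psi$ and reading off the operator from the general local expression \eqref{EQ ML on Manifold}. Your identification of the domain step as the only place requiring care is apt: essential self-adjointness of $\mathscr{L}_{h,\A}$ on $\mathscr{C}_0^\infty(\RR^2)$, together with the fact that a single point has zero capacity in dimension two (so $\mathscr{C}_0^\infty(\RR^2\setminus\{0\})$ is still a core), yields that the transported operator is essentially self-adjoint on $\mathscr{C}_0^\infty(\RR^+\times\RR/2\pi\ZZ)$, whence its closure is its adjoint, i.e.\ the maximal realization stated.
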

Thanks to the gauge of invariance, we can choose a magnetic potential
compatible with the radial symmetry (the one given by the Poincaré
lemma):
\begin{equation}\label{QT EQ Magnetic Potential}
  A_1(q)=-q_2 \alpha(q) ,\qquad A_2(q) = q_1\alpha(q) \,, 
\end{equation}
where
\[\alpha(q):= \int_{0}^{1} t \textbf{B}(tq) \,\dd t\,.\]
With the choice, we get
\[ A_r(r,\theta) = 0, \,\qquad A_{\theta}(r,\theta) = G(r):=\int_0^{r}
  \tau \beta\left(\frac{\tau^2}{2}\right) \dd \tau\,, \] and the
magnetic Laplacian becomes
\begin{equation*}
  \mathscr{K}_{h} = -h^2 r^{-2} \left(r \partial_r \right)^2  \  + r^{-2}(-ih \partial_\theta - G(r))^2\,.
\end{equation*}
Via the Fourier decomposition, the magnetic Laplacian
$\mathscr{K}_{h}$ can be written as the direct sum of radial electric
Schr\"{o}dinger operators
\begin{equation}
  \mathscr{K}_{h} =  \bigoplus_{m \in \ZZ} \mathfrak{L}_{h,m}\,,\qquad \mathfrak{L}_{h,m}:= -h^2 r^{-2} \left(r \partial_r \right)^2  \  + r^{-2}(hm- G(r))^2\,.
\end{equation}
Therefore, we can focus on the spectral analysis of
$\mathfrak{L}_{h,m}$, especially on its ground-energy.

\subsection{Compact resolvent}

\begin{proposition}\label{QT Compact resolvent electric}
  For any $m \in \ZZ$ and $h>0$, the operator $\mathfrak{L}_{h,m}$ has
  compact resolvent.
\end{proposition}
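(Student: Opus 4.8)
The plan is to show that $\mathfrak{L}_{h,m}$ has compact resolvent by proving that its form domain embeds compactly into $\mathrm{L}^2(\RR^+, r\,\dd r)$, which by the min–max principle amounts to showing that the effective potential
\[
V_m(r) = \frac{(hm - G(r))^2}{r^2}
\]
tends to $+\infty$ as $r\to +\infty$ (and is bounded below, and that the form is bounded below), together with a control of the behavior at $r=0$. Recall $G(r) = \int_0^r \tau\beta(\tau^2/2)\,\dd\tau$, so since $\beta \geq \beta(0) > 0$ we have $G(r) \geq \beta(0)\, r^2/2$, hence $G(r)/r \to +\infty$ and therefore $V_m(r) \to +\infty$ as $r\to +\infty$. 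Near $r = 0$, the term $(r\partial_r)^2/r^2$ already behaves like the two-dimensional radial Laplacian, and $V_m(r) = (hm)^2/r^2 + O(1)$ for $m\neq 0$ provides an additional confining singularity; for $m=0$ the potential is bounded near $0$ and the quadratic-form domain is still well controlled. So the operator is bounded below and its form domain is
\[
\mathcal{Q}_{h,m} = \Bigl\{\, u \in \mathrm{L}^2(\RR^+, r\,\dd r) : r\partial_r u \in \mathrm{L}^2(\RR^+, r\,\dd r),\ V_m^{1/2} u \in \mathrm{L}^2(\RR^+, r\,\dd r) \,\Bigr\}.
\]

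First I would establish the form-boundedness from below and identify the form domain as above; this is routine since $\mathfrak{L}_{h,m}$ is a direct-sum fiber of the self-adjoint operator $\mathscr{K}_h$. Next, I would invoke the standard criterion: if $H = -\Delta_{\text{rad}} + W$ with $W$ locally integrable, bounded below, and $W(r)\to+\infty$ as $r\to\infty$, then $H$ has compact resolvent. Concretely, one shows that a bounded sequence $(u_k)$ in $\mathcal{Q}_{h,m}$ has a subsequence converging in $\mathrm{L}^2(\RR^+, r\,\dd r)$: on any compact interval $[\delta, R]$, the bound on $r\partial_r u_k$ gives an $H^1$ bound, hence (Rellich) strong $\mathrm{L}^2$ convergence of a subsequence; the tail $r > R$ is controlled because $\int_{r>R} |u_k|^2 r\,\dd r \leq (\inf_{r>R} V_m)^{-1} \int |u_k|^2 V_m\, r\,\dd r \to 0$ uniformly in $k$ as $R\to\infty$; and the piece near $r = 0$ is controlled either by the Hardy-type inequality coming from the $(hm)^2/r^2$ term (when $m\neq 0$) or directly from the radial $H^1$ bound plus the finiteness of the measure $r\,\dd r$ near $0$ (when $m = 0$). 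A diagonal argument over $\delta \to 0$, $R\to\infty$ then yields a convergent subsequence.

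The main obstacle is the behavior at the endpoint $r = 0$: one must check that the natural quadratic form is closed and that the essential self-adjointness / correct choice of boundary condition at $0$ matches the operator $\mathfrak{L}_{h,m}$ as defined (coming from the restriction of $\mathscr{K}_h$ to its Fourier sector), so that the min–max characterization actually applies to the operator in the statement rather than to some other self-adjoint extension. For $m \neq 0$ the strong repulsive singularity makes this automatic (limit point at $0$); for $m = 0$ one must argue, as is classical for the radial Laplacian in two dimensions, that the form domain with the finiteness conditions above is the correct one. Once this point is settled, compactness of the embedding $\mathcal{Q}_{h,m}\hookrightarrow \mathrm{L}^2(\RR^+, r\,\dd r)$ follows from the confinement $V_m\to\infty$, and hence $\mathfrak{L}_{h,m}$ has compact resolvent.
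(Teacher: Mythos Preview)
Your proposal is correct and follows essentially the same route as the paper: identify the quadratic form and its domain, observe that the effective potential $V_{h,m}(r)=(hm-G(r))^2/r^2$ tends to $+\infty$ as $r\to+\infty$ (the paper derives this from condition~\eqref{QT EQ Condition Of Beta}), and conclude that the form domain embeds compactly into $\mathrm{L}^2(\RR^+,r\,\dd r)$. The paper's proof is terser---it simply invokes the Riesz--Fr\'echet--Kolmogorov criterion---whereas you spell out the Rellich/tail/origin decomposition and are more careful about the endpoint $r=0$ and the choice of self-adjoint realization, a point the paper does not explicitly address.
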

\begin{proof}
  We fix $m \in \ZZ$ and $h>0$, consider the sesquilinear
  \begin{equation*}
    \mathcal{Q}_{h,m} (u,v) = 2h^2 \int_0^{+\infty} \partial_{r} \, u \,\overline{\partial_{r}  v }\, r \dd r +  \int_0^{+\infty} \frac{\left( hm-G(r)\right)^2}{r^2} u\, \overline{v}\, r\dd r +\int_0^{+\infty} u \overline{v} r\dd r\,,
  \end{equation*}
  defined on the domain
  \begin{equation*}
    \textup{Dom}(\mathcal{Q}_{h,m}) = \left\{ u \in \textup{L}^2(\RR^{+},r\dd r) : \partial_{r}u \in \textup{L}^2(\RR^{+},r\dd r) , \sqrt{V_{h,m}}u\in \textup{L}^2(\RR^{+},r\dd r)\right\},
  \end{equation*}
  with
  \[ V_{h,m}(r) := \frac{\left( hm-G(r)\right)^2}{r^2}.\]
  $ \textup{Dom}\left(\mathcal{Q}_{h,m} \right) $ is a Hilbert space
  equipped with the inner product $\mathcal{Q}_{h,m}$. The
  sesquilinear form $\mathcal{Q}_{h,m}$ induces a positive
  self-adjoint operator $\mathcal{S}_{h,m}$. From \eqref{QT EQ
    Condition Of Beta}, we have
  \begin{equation}\label{QT EQ V To Infty}
    \lim_{r\to +\infty} \frac{(hm-G(r))^2}{r^2} = +\infty\,.
  \end{equation}
  The conclusion easily follows by means of the
  Riesz-Fréchet-Kolmogorov criterion for compactness in $L^p$ spaces.
\end{proof}
\subsection{Spectrum of rescaled radial electric Schr\"{o}dinger
  operators}\label{Section rescale op}
By the change of variable $\rho=\frac{r^2}{2}$, we have
\[ G(r)=\int_{0}^{r} \tau \beta\left(\frac{\tau^2}{2} \right)\,\dd
  \tau = \int_{0}^{\rho} \beta(s)\,\dd s\,.\] We let
\[ a(\rho):= \int_{0}^{\rho} \beta(s)\,\dd s \,.\] Using the unitary
transformation
\begin{align}\label{QT EQ Unitary T1}
  T_1 : & \textup{L}^2(\RR^{+},d\rho)\to \textup{L}^2(\RR^{+},r\dd r), \\
        & v(\rho) \mapsto (T_1 v) (r) = v(r^2/2)\,,\nonumber
\end{align}
we get the new operator, acting on $\textup{L}^2(\RR^{+},\dd\rho)$,
\begin{equation}
  \mathcal{N}_{h,m}:= T_1^{-1}\mathfrak{L}_{h,m}T_1= -2h^2 \partial_{\rho} \rho \partial_{\rho} + \frac{\left( hm-a(\rho)\right)^2}{2\rho}\,.
\end{equation}

\subsubsection{Rescaling}
The rescaling $\rho = ht$ is convenient to analyse the expansion of
the eigenvalues and some decay properties of the
eigenfunctions. Consider
\begin{align}\label{QT EQ Unitary T2}
  T_2 : & \textup{L}^2(\RR^{+},\dd t)\to \textup{L}^2(\RR^{+},\dd \rho), \\
        & v(t) \mapsto (T_2 v) (\rho) = h^{-1/2} v(h^{-1}\rho).\nonumber
\end{align}
and
\begin{equation}
  \mathcal{M}_{h,m}:= T_2^{-1} \mathcal{N}_{h,m} T_2 = -2h \partial_{t} t\partial_{t} + \frac{(hm-a(ht))^2}{2ht}\,,
\end{equation}
We have
\begin{eqnarray*}
  a(ht) = \beta(0) ht + \frac{\beta'(0)}{2} h^2t^2 + \mathcal{O}(h^{3}t^{3})\,.
\end{eqnarray*}
and thus
\begin{equation*}
  \mathcal{M}_{h,m} = h \mathfrak{L}_{m}^{[0]} + h^2 \mathfrak{L}_{m}^{[1]} +\frac{R(ht)}{t},
\end{equation*}
where
\begin{align*}
  \mathfrak{L}_{m}^{[0]} &:= -2 \partial_{t} t\partial_{t} + \frac{\beta(0)^2t}{2}  +\frac{m^2}{2t} - m \beta(0)\\
  \mathfrak{L}_{m}^{[1]} &:= -\frac{m\beta'(0)}{2} t +\frac{\beta(0)\beta'(0)}{2} t^2 \,,
\end{align*}
and $R$ is a remainder such that there exist a constant $C>0$
and $\delta>0$ such that
\begin{equation}\label{delta radial}
  \vert R(s) \vert \leq C s^3 \qquad \text{ for all } s \in [0,\delta)\,.
\end{equation}
We consider $\mathfrak{L}_{m}^{[0]}$ defined through the sesquilinear
form
\begin{equation*}
  \mathcal{Q}_{m}^{[0]}(u,v)= \int_{0}^{+\infty} 2 t\partial_t u\, \overline{\partial_t v} \dd t + \int_{0}^{+\infty} \left( \frac{\beta(0)^2t}{2}  +\frac{m^2}{2t} - m \beta(0) \right) u\, \overline{v} \dd t\,.
\end{equation*}
The operator $\mathfrak{L}_{m}^{[0]}$ has clearly compact
resolvent. Furthemore, the spectrum of $\mathfrak{L}_{m}^{[0]}$ is
well known and related to the Laguerre operator. Indeed, by letting
$t =\frac{s}{\beta(0)}$ , the operator $ \mathfrak{L}_{m}^{[0]}$
becomes
\begin{equation}
  \beta(0) \left( -2\partial_{s} s\partial_{s} + \frac{s}{2}  +\frac{m^2}{2s} -m \right).
\end{equation}
We consider the following operator $\mathcal{T}_{m}$ on the Hilbert
space $\textup{L}^2(\RR^{+},s^{-\vert m \vert} e^{s} ds) $
\begin{equation*}
  \mathcal{T}_{m} = s^{-\frac{\vert m \vert}{2}}e^{\frac{s}{2}}\left( -2\partial_{s} s\partial_{s} + \frac{s}{2}  +\frac{m^2}{2s} - m \right)s^{\frac{\vert m \vert}{2} }e^{-\frac{s}{2}}\,.
\end{equation*}
It is unitarily equivalent to $\mathfrak{L}_{m}^{[0]}$. A computation
gives
\begin{eqnarray*}
  \mathcal{T}_{m}= -2s \partial_{s}^2 +\left(2s-2-2\vert m \vert \right)\partial_{s} +\vert m \vert -m+1 \,.
\end{eqnarray*}
The spectrum of operator $\mathcal{T}_{m}$ is described in Appendix
\ref{app.B} and we get
\[ \textup{Sp}(\mathfrak{L}_{m}^{[0]})=\left\{(2k+1+\vert m \vert-m)
    \beta(0) : k\in \NN \right\} \,.\]
\begin{remark}
  When $m\geq 0$, the first and the second eigenpairs of
  $\mathfrak{L}_{m}^{[0]}$ are respectively
  \[\left(\beta(0) ,t^{\frac{m}{2}} e^{\frac{-\beta(0)t}{2}} \right)
    \qquad \text{ and } \qquad \left(
      3\beta(0),\left[\beta(0)t-m-1\right]t^{\frac{m}{2}}
      e^{\frac{-\beta(0)t}{2}}\right).\]
\end{remark}
By using the $t^{\frac{m}{2}} e^{\frac{-\beta(0)t}{2}}$ as test
function and then the Spectral Theorem, we get the following (this
result will be recovered later by using a WKB analysis).
\begin{proposition}\label{Estimate Eig}
  For all $m\in \NN$, there exist $C>0$ and $h_0>0$ such that, for all
  $h\in (0,h_0)$,
  \begin{equation*}
    \textup{dist}\left(\mu_{m,0} h + \mu_{m,1}h^2  ,\textup{Sp}\left(\mathcal{M}_{h,m} \right)\right) \leq C h^{3} \,,
  \end{equation*}
  where $\mu_{m,0} = \beta(0)$ and
  $ \mu_{m,1} = \frac{(m+1)\beta'(0)}{\beta(0)}$.
\end{proposition}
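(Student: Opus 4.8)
The plan is to construct a two-term quasimode for $\mathcal{M}_{h,m}$ and then conclude by the spectral theorem. Recall from the discussion preceding the statement that, for $m\in\NN$, the bottom of the spectrum of the model operator $\mathfrak{L}_m^{[0]}$ is the \emph{simple} eigenvalue $\mu_{m,0}=\beta(0)$, with eigenfunction $\psi_0(t)=t^{m/2}e^{-\beta(0)t/2}$. I would look for a quasimode of the form $\psi_0+h\psi_1$ with test energy $\lambda_h:=\mu_{m,0}h+\mu_{m,1}h^2$, using the decomposition $\mathcal{M}_{h,m}=h\mathfrak{L}_m^{[0]}+h^2\mathfrak{L}_m^{[1]}+R(ht)/t$ with $|R(s)|\le Cs^3$ for $s\in[0,\delta)$ established above.

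Substituting and collecting powers of $h$: the $h^1$ term is $h(\mathfrak{L}_m^{[0]}-\mu_{m,0})\psi_0=0$; the $h^2$ term vanishes provided $\psi_1$ solves the transport equation $(\mathfrak{L}_m^{[0]}-\mu_{m,0})\psi_1=(\mu_{m,1}-\mathfrak{L}_m^{[1]})\psi_0$. Since $\mu_{m,0}$ is a simple eigenvalue of the self-adjoint operator $\mathfrak{L}_m^{[0]}$ (which has compact resolvent), the Fredholm alternative makes this solvable in the operator domain if and only if the right-hand side is orthogonal to $\psi_0$, i.e. $\mu_{m,1}\|\psi_0\|^2=\langle\mathfrak{L}_m^{[1]}\psi_0,\psi_0\rangle$. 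With $\mathfrak{L}_m^{[1]}=-\tfrac{m\beta'(0)}{2}t+\tfrac{\beta(0)\beta'(0)}{2}t^2$ and the elementary identities $\int_0^{+\infty}t^k e^{-\beta(0)t}\,\dd t=k!\,\beta(0)^{-k-1}$, this quotient equals exactly $\tfrac{(m+1)\beta'(0)}{\beta(0)}$; thus the value of $\mu_{m,1}$ in the statement is precisely the one that makes the construction possible. Solving the equation via the conjugation to the Laguerre operator $\mathcal{T}_m$, one gets $\psi_1$ again of the form (polynomial of degree at most two) times $t^{m/2}e^{-\beta(0)t/2}$, hence smooth on $(0,+\infty)$, with the correct $t^{m/2}$ behaviour at the origin and exponential decay at infinity.

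Next I would localize. Fix $\chi\in\mathscr{C}_0^\infty([0,\delta))$ with $\chi\equiv1$ near $0$ and set $\Psi_h:=\chi(ht)(\psi_0+h\psi_1)$. This is compactly supported; pulling back by $T_1T_2$ it corresponds to a function in $\mathscr{C}_0^\infty(\RR^2)$ of Fourier mode $m$, so $\Psi_h\in\textup{Dom}(\mathcal{M}_{h,m})$; and on its support $ht<\delta$, so the bound on $R$ applies. Estimating $(\mathcal{M}_{h,m}-\lambda_h)\Psi_h$ term by term: the commutators of the differential part of $\mathcal{M}_{h,m}$ with $\chi(ht)$ are supported in $\{t\gtrsim\delta/h\}$, where $\psi_0+h\psi_1=\mathcal{O}(e^{-c/h})$, hence contribute $\mathcal{O}(e^{-c/h})$; on the support of $\chi$, the two transport equations kill the $h^1$ and $h^2$ contributions and there remains $\chi(ht)\big[h^3(\mathfrak{L}_m^{[1]}-\mu_{m,1})\psi_1+R(ht)t^{-1}(\psi_0+h\psi_1)\big]$, where $\|R(ht)t^{-1}(\psi_0+h\psi_1)\|\le Ch^3\|t^2(\psi_0+h\psi_1)\|=\mathcal{O}(h^3)$ since $t^2\psi_0,t^2\psi_1\in\textup{L}^2$ with norms bounded uniformly in $h$, and the first term is plainly $\mathcal{O}(h^3)$. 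Hence $\|(\mathcal{M}_{h,m}-\lambda_h)\Psi_h\|_{\textup{L}^2}\le Ch^3$, while $\|\Psi_h\|\to\|\psi_0\|>0$ as $h\to0$. Since $\mathcal{M}_{h,m}$ is self-adjoint with compact resolvent (unitarily equivalent to $\mathfrak{L}_{h,m}$ via $T_1,T_2$, by Proposition \ref{QT Compact resolvent electric}), the spectral theorem gives $\textup{dist}\big(\lambda_h,\textup{Sp}(\mathcal{M}_{h,m})\big)\le\|(\mathcal{M}_{h,m}-\lambda_h)\Psi_h\|/\|\Psi_h\|\le Ch^3$, which is the claim with $\lambda_h=\mu_{m,0}h+\mu_{m,1}h^2$.

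The algebraic part (the Gamma-integral computation forcing $\mu_{m,1}=\tfrac{(m+1)\beta'(0)}{\beta(0)}$ and the explicit solution of the Laguerre transport equation) is routine; the step requiring the most care is the uniformity in $h$ of the cut-off and remainder bounds — in particular controlling $\mathcal{M}_{h,m}$ applied to the $\chi(ht)$-localized ansatz and checking that $\psi_1$ has the correct behaviour at $t=0$ so that $\Psi_h$ genuinely lies in $\textup{Dom}(\mathcal{M}_{h,m})$.
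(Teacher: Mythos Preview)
Your argument is correct and follows the same strategy the paper indicates---a quasimode plus the spectral theorem for the self-adjoint operator $\mathcal{M}_{h,m}$. The paper's ``proof'' is the single sentence ``By using the $t^{m/2}e^{-\beta(0)t/2}$ as test function and then the Spectral Theorem'', whereas you carry out a genuine two-term construction $\psi_0+h\psi_1$; this extra step is in fact necessary, since with $\psi_0$ alone the crude quasimode bound $\textup{dist}(\lambda,\textup{Sp})\le\|(\mathcal{M}_{h,m}-\lambda)\psi_0\|/\|\psi_0\|$ only yields $\mathcal{O}(h^2)$ (the term $h^2(\mathfrak{L}_m^{[1]}-\mu_{m,1})\psi_0$ does not vanish). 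Your Fredholm computation of $\mu_{m,1}$, the localization $\chi(ht)$, the domain check via the pullback to $\mathscr{C}_0^\infty(\RR^2)$, and the remainder estimate $\|R(ht)t^{-1}\psi_j\|=\mathcal{O}(h^3)$ are all sound, so your version is a strictly more complete justification of the stated $\mathcal{O}(h^3)$ bound than what the paper provides.
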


\subsubsection{Semiclassical Agmon estimate}
The above proposition states that, for each $m\in \NN$, one can find
an eigenvalue of $\mathcal{M}_{h,m}$ near
$\beta(0) h+ \frac{(m+1) \beta'(0)}{\beta(0)} h^2$. We will use Agmon
estimates to prove that this eigenvalue is in fact
$\lambda_0(\mathcal{M}_{h,m})$.  In this section, let us denote
\[ \mathcal{V}_{h,m}(t)= \frac{(hm-a(ht))^2}{2ht}\,,\] and consider
the quadratic form associated with $\mathcal{M}_{h,m}$:
\[ Q_{h,m}(u) = 2h\int_{0}^{\infty} t \vert \partial_{t}u \vert^2
  \,\dd t + \int_{0}^{\infty} \mathcal{V}_{h,m}(t)\vert u(t)\vert^2
  \,\dd t \,.\]
Since we only deal with the Hilbert space $\textup{L}^2(\RR^{+},\dd t)$ in this section, we denote $\Vert \cdot \Vert_{\textup{L}^2}$ (resp. $\langle \cdot, \cdot\rangle_{\textup{L}^2}$) instead of $\Vert \cdot \Vert_{\textup{L}^2(\RR^{+},\dd t)}$ (resp. $\langle \cdot, \cdot\rangle_{\textup{L}^2(\RR^{+},\dd t)}$).\\
Let us recall that $a(\rho)=\int_{0}^{\rho} \beta(s) \dd s$. From
\eqref{QT EQ Condition Of Beta}, we get
\[a(ht) \geq \beta(0) ht \,. \] For $t$ large enough, we have
\begin{equation}\label{QT EQ Estimate V}
  \mathcal{V}_{h,m}(t)= \frac{(hm-a(ht))^2}{2ht} \geq h\frac{(\beta(0)t-m)^2}{2t}\,.
\end{equation}
\begin{proposition}\label{QT TH Lax R}
  Let $m\in \NN$ and let $\Phi \in W^{1,\infty}(\RR^{+},\RR)$, then
  for all $u\in \textup{Dom}(\mathcal{M}_{h,m})$, we have
  \begin{equation}\label{Agmon form}
    Q_{h,m}(e^{\Phi}u) = \textup{Re}\langle \mathcal{M}_{h,m}u, e^{2\Phi} u \rangle_{\textup{L}^2} +2h \int_0^{+\infty} t (\Phi '(t) )^2e^{2\Phi} \vert u \vert^2 \,dt\,.
  \end{equation}
\end{proposition}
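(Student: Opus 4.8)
The plan is to prove the identity \eqref{Agmon form} by a direct computation, expanding the left-hand side and integrating by parts. The essential input is the explicit form of $Q_{h,m}$ together with the fact that, up to the spectral decomposition, $\mathcal{M}_{h,m}$ is built from the one-dimensional operator $-2h\partial_t t\partial_t$ plus the multiplication operator $\mathcal{V}_{h,m}$. Since the potential term in $Q_{h,m}(e^\Phi u)$ is $\int_0^{+\infty}\mathcal{V}_{h,m}\,e^{2\Phi}\vert u\vert^2\,\dd t$, which coincides exactly with the potential contribution to $\textup{Re}\langle \mathcal{M}_{h,m}u,e^{2\Phi}u\rangle_{\textup{L}^2}$, all the work lies in the kinetic term.

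So first I would isolate the kinetic part and compute $2h\int_0^{+\infty} t\,\vert \partial_t(e^\Phi u)\vert^2\,\dd t$. Writing $\partial_t(e^\Phi u)=e^\Phi(\Phi' u+\partial_t u)$ and expanding the modulus squared gives three groups of terms: $\vert\partial_t u\vert^2 e^{2\Phi}$, the mixed term $2\,\textup{Re}(\overline{\partial_t u}\,\Phi' u)e^{2\Phi} = \Phi'\,\partial_t(\vert u\vert^2)\,e^{2\Phi}$, and $(\Phi')^2\vert u\vert^2 e^{2\Phi}$. The last group is precisely the extra term $2h\int_0^{+\infty} t(\Phi'(t))^2 e^{2\Phi}\vert u\vert^2\,\dd t$ appearing on the right-hand side. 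For the first two groups I would recombine them as $\textup{Re}\big(\overline{\partial_t(e^{2\Phi}u)}\,\partial_t u\big)$, i.e. one recognizes $2h\int_0^{+\infty} t\,\textup{Re}\big(\overline{e^{2\Phi}\partial_t u + 2\Phi' e^{2\Phi}u}\cdot\partial_t u\big)\,\dd t$, and then integrate by parts to move one $\partial_t$ and the factor $t$ onto $\partial_t u$, recovering $\textup{Re}\langle -2h\partial_t t\partial_t u,\,e^{2\Phi}u\rangle_{\textup{L}^2}$; adding the potential term then yields $\textup{Re}\langle \mathcal{M}_{h,m}u, e^{2\Phi}u\rangle_{\textup{L}^2}$.

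The main point requiring care — and the only genuine obstacle — is the justification of the integration by parts: one must check that the boundary terms at $t=0$ and $t\to+\infty$ vanish. Near $t=0$ the weight $t$ kills the boundary contribution for $u\in\textup{Dom}(\mathcal{M}_{h,m})$, and at infinity one uses that $u$ and $\sqrt{t}\,\partial_t u$ are in $\textup{L}^2$ together with the confining lower bound \eqref{QT EQ Estimate V} on $\mathcal{V}_{h,m}$, while $\Phi\in W^{1,\infty}$ keeps $e^\Phi$ and $\Phi'$ bounded so no growth is introduced. A clean way to handle this rigorously is to first establish the identity for $u$ in a core (smooth, compactly supported in $(0,+\infty)$), where all manipulations are legitimate, and then pass to the limit using the density of the core in $\textup{Dom}(\mathcal{M}_{h,m})$ for the graph norm, together with the continuity of both sides in that norm (here the boundedness of $\Phi$ and $\Phi'$ is what guarantees the continuity of the $e^\Phi$-weighted quadratic forms). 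This gives \eqref{Agmon form} for all $u\in\textup{Dom}(\mathcal{M}_{h,m})$.
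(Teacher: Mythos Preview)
Your proposal is correct and follows essentially the same approach as the paper. The paper organizes the computation via the commutator $[P,e^{\Phi}]=\sqrt{t}\,\Phi' e^{\Phi}$ with $P=\sqrt{t}\,\partial_t$, deriving the identity $\textup{Re}\langle Pu, Pe^{2\Phi}u\rangle = \|Pe^{\Phi}u\|^2 - \|[P,e^{\Phi}]u\|^2$, which is exactly your direct expansion of $|\partial_t(e^{\Phi}u)|^2$ written in different notation; your treatment of the boundary terms and the density argument is in fact more careful than the paper's, which simply asserts the integration-by-parts identity.
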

\begin{proof}
  We have
  \begin{equation}\label{Integral by part eq}
    2h\langle \sqrt{t} \partial_{t} u, \sqrt{t} \partial_{t}(e^{2\Phi} u) \rangle_{\textup{L}^2}+\int_0^{+\infty} \mathcal{V}_{h,m}(t) e^{2\Phi} \vert u \vert^2 \,dt = \langle \mathcal{M}_{h,m} u,e^{2\Phi} u \rangle_{\textup{L}^2}\,.
  \end{equation}
  We set $P:= \sqrt{t}\partial_{t} $. Notice that the commutator
  \[ [P, e^{\Phi} ] = \sqrt{t} \Phi'(t) e^{\Phi}\] is a multiplication
  operator. We have
  \begin{eqnarray*}
    \langle P u, P e^{2\Phi} u \rangle_{\textup{L}^2} &=& \langle P u, [P, e^{\Phi} ] e^{\Phi} u\rangle_{\textup{L}^2} + \langle P u,  e^{\Phi} P e^{\Phi} u\rangle_{\textup{L}^2}\\
                                                      &=& \langle e^{\Phi} P u, [P, e^{\Phi} ]  u\rangle_{\textup{L}^2} + \langle e^{\Phi} P u,   P e^{\Phi} u\rangle_{\textup{L}^2} \\
                                                      &=& \langle e^{\Phi} P u, [P, e^{\Phi} ]  u\rangle_{\textup{L}^2} + \langle  P e^{\Phi} u,   P e^{\Phi} u\rangle_{\textup{L}^2} +  \langle  [e^{\Phi},P] u,   P e^{\Phi} u\rangle_{\textup{L}^2}\\
                                                      &=& \Vert  P e^{\Phi} u\Vert_{\textup{L}^2}^2 -\Vert [P,e^{\Phi}]u \Vert_{\textup{L}^2}^2+ \langle e^{\Phi} P u, [P, e^{\Phi} ]  u\rangle_{\textup{L}^2} - \langle [P, e^{\Phi} ]  u, e^{\Phi} P u \rangle_{\textup{L}^2}\,.
  \end{eqnarray*}
  Taking the real part of \eqref{Integral by part eq}, we get
  \begin{eqnarray*}
    && 2h \Vert  P e^{\Phi} u\Vert_{\textup{L}^2}^2 - 2h\Vert [P,e^{\Phi}]u \Vert_{\textup{L}^2}^2 +\int_0^{+\infty}\mathcal{ V}_{h,m}(t) e^{2\Phi} \vert u \vert^2 \,dt = \textup{Re} \langle \mathcal{M}_{h,m}u, e^{2\Phi} u \rangle_{\textup{L}^2}\,.
  \end{eqnarray*}
\end{proof}
\begin{proposition}\label{Agmon type 1}
  Under the assumption \eqref{QT EQ Condition Of Beta}, for all
  $m\in \NN$, and for all $\varepsilon \in (0,\frac{\beta(0)}{2})$,
  there exists $M>0$ such that
  \begin{equation*}
    \Vert e^{\varepsilon t}\Psi\Vert_{\textup{L}^2}^2 \leq M \Vert \Psi \Vert_{\textup{L}^2}^2\,,  \qquad \text{ and }\qquad  Q_{h,m}(e^{\varepsilon t}\Psi) \leq M\,h \Vert \Psi \Vert_{\textup{L}^2}^2\,,
  \end{equation*}
  for all eigenfunctions $\Psi$ of the operator $\mathcal{M}_{h,m}$,
  with eigenvalue of order $h$.
\end{proposition}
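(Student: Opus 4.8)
The plan is to run a semiclassical Agmon estimate based on the localization identity \eqref{Agmon form} of Proposition \ref{QT TH Lax R}, using a Lipschitz truncation of the weight $\varepsilon t$ and closing with a limiting argument.

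First, fix $m\in\NN$ and $\varepsilon\in(0,\beta(0)/2)$, and let $\Psi\in\textup{Dom}(\mathcal{M}_{h,m})$ be an eigenfunction, $\mathcal{M}_{h,m}\Psi=\lambda(h)\Psi$, where $0\le\lambda(h)\le C_0h$ for $h$ small (since $\mathcal{M}_{h,m}\ge0$ and $\lambda(h)=O(h)$). For $R>0$ introduce the weight $\Phi_R(t)=\varepsilon\min(t,R)\in W^{1,\infty}(\RR^{+},\RR)$, so that $|\Phi_R'|\le\varepsilon$ and $e^{2\Phi_R}\le e^{2\varepsilon t}$. Plugging $u=\Psi$, $\Phi=\Phi_R$ into \eqref{Agmon form} and using $\textup{Re}\,\langle\mathcal{M}_{h,m}\Psi,e^{2\Phi_R}\Psi\rangle_{\textup{L}^2}=\lambda(h)\Vert e^{\Phi_R}\Psi\Vert_{\textup{L}^2}^2$, then dropping the nonnegative kinetic part of $Q_{h,m}$ on the left and bounding $(\Phi_R')^2\le\varepsilon^2$ on the right, I obtain
\[ \int_0^{+\infty}\left(\mathcal{V}_{h,m}(t)-2h\varepsilon^2 t\right)e^{2\Phi_R}\,|\Psi|^2\,\dd t\ \le\ \lambda(h)\,\Vert e^{\Phi_R}\Psi\Vert_{\textup{L}^2}^2\,. \]

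The heart of the argument is that the tunnelling cost $2h\varepsilon^2 t$ is strictly dominated by the growth of $\mathcal{V}_{h,m}$ far from the origin. Indeed, by \eqref{QT EQ Estimate V}, for $t\ge m/\beta(0)$ we have $\mathcal{V}_{h,m}(t)\ge h\frac{(\beta(0)t-m)^2}{2t}\ge h\left(\left(\tfrac{\beta(0)^2}{2}-2\varepsilon^2\right)t-m\beta(0)\right)+2h\varepsilon^2 t$, and since $\varepsilon<\beta(0)/2$ the coefficient $\tfrac{\beta(0)^2}{2}-2\varepsilon^2$ is positive; hence there exist $T>0$ and $\delta>0$, independent of $h$ and $R$, such that $\mathcal{V}_{h,m}(t)-2h\varepsilon^2 t\ge h\delta t$ on $[T,+\infty)$. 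Splitting the previous integral at $T$ and using $\mathcal{V}_{h,m}\ge0$, $t\le T$, $e^{2\Phi_R}\le e^{2\varepsilon T}$ on $[0,T]$, I get
\[ h\delta\int_T^{+\infty}t\,e^{2\Phi_R}\,|\Psi|^2\,\dd t\ \le\ 2h\varepsilon^2 T e^{2\varepsilon T}\Vert\Psi\Vert_{\textup{L}^2}^2+\lambda(h)\,\Vert e^{\Phi_R}\Psi\Vert_{\textup{L}^2}^2\,. \]
Bounding $\Vert e^{\Phi_R}\Psi\Vert_{\textup{L}^2}^2\le e^{2\varepsilon T}\Vert\Psi\Vert_{\textup{L}^2}^2+\tfrac1T\int_T^{+\infty}t\,e^{2\Phi_R}\,|\Psi|^2\,\dd t$ and enlarging $T$ so that $\lambda(h)/T\le C_0 h/T\le h\delta/2$, I absorb the last term into the left-hand side. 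This gives $\int_T^{+\infty}t\,e^{2\Phi_R}\,|\Psi|^2\,\dd t\le C_1\Vert\Psi\Vert_{\textup{L}^2}^2$, hence $\Vert e^{\Phi_R}\Psi\Vert_{\textup{L}^2}^2\le M\Vert\Psi\Vert_{\textup{L}^2}^2$, with $C_1,M$ independent of $h$ and $R$.

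Finally, since $e^{2\Phi_R}\uparrow e^{2\varepsilon t}$ as $R\to+\infty$, monotone convergence yields the first bound $\Vert e^{\varepsilon t}\Psi\Vert_{\textup{L}^2}^2\le M\Vert\Psi\Vert_{\textup{L}^2}^2$. For the second, \eqref{Agmon form} together with $\int_0^{+\infty}t\,e^{2\Phi_R}\,|\Psi|^2\,\dd t\le\left(Te^{2\varepsilon T}+C_1\right)\Vert\Psi\Vert_{\textup{L}^2}^2$ gives $Q_{h,m}(e^{\Phi_R}\Psi)\le M'h\Vert\Psi\Vert_{\textup{L}^2}^2$ uniformly in $R$; since $e^{\Phi_R}\Psi\to e^{\varepsilon t}\Psi$ and $\partial_t(e^{\Phi_R}\Psi)\to\partial_t(e^{\varepsilon t}\Psi)$ pointwise a.e., Fatou's lemma applied to the two nonnegative integrals defining $Q_{h,m}$ yields $Q_{h,m}(e^{\varepsilon t}\Psi)\le M'h\Vert\Psi\Vert_{\textup{L}^2}^2$ (in particular $e^{\varepsilon t}\Psi$ lies in the form domain). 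The only delicate point is the absorption step above, where both the hypothesis $\varepsilon<\beta(0)/2$ and the order-$h$ size of $\lambda(h)$ are essential; the truncation and the passage to the limit are routine.
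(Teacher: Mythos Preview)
Your proof is correct and follows essentially the same Agmon-type strategy as the paper: apply the localization identity \eqref{Agmon form} with a truncated weight, exploit the lower bound \eqref{QT EQ Estimate V} to show that $\mathcal{V}_{h,m}(t)-2h\varepsilon^2 t$ grows like $ht$ at infinity (precisely where $\varepsilon<\beta(0)/2$ enters), split at a fixed $T$, and pass to the limit by Fatou. The only cosmetic differences are that the paper uses the tent functions $\chi_k$ of \eqref{eq.psi} rather than your cap $\Phi_R=\varepsilon\min(t,R)$, and it absorbs the eigenvalue term directly into the integrand (writing $\int(\mathcal{V}_{h,m}-Ch-2h\varepsilon^2 t)e^{2\Phi}|\Psi|^2\,\dd t\le 0$) instead of your two-step absorption via $\|e^{\Phi_R}\Psi\|^2\le e^{2\varepsilon T}\|\Psi\|^2+\tfrac1T\int_T^\infty t\,e^{2\Phi_R}|\Psi|^2$; both routes yield the same bounds.
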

\begin{proof}
  Let us consider a sequence of functions
  $\left(\chi_{k}\right)_{k\geq 1}$ defined as follows
  \begin{equation}\label{eq.psi}
    \chi_k(t) =\left\{ 
      \begin{aligned}
        &t    \qquad &&\text{ for } 0\leq t\leq k,\\
        &2k-t \qquad &&\text{ for } k\leq t\leq 2k,\\
        &0    \qquad &&\text{ for } t\geq 2k.\\
      \end{aligned}\right.
  \end{equation}
  Notice that $\chi_k \in W^{1,\infty}(\RR^{+},\RR)$ and $\vert \chi_{k}'(t)\vert \leq 1$ for all $t\in \RR^{+}$.\\
  Let us consider the equation
  \begin{equation}
    \mathcal{M}_{h,m} \Psi = \lambda \Psi \qquad \text{ with } \quad \lambda \leq Ch\,.
  \end{equation}
  Using Proposition \ref{QT TH Lax R} with
  $\Phi =\varepsilon\chi_k(t)$, we get
  \begin{align*}
    Q_{h,m}(e^{\varepsilon\chi_k(t)}\Psi) &\leq \lambda \Vert e^{\varepsilon\chi_k(t)} \Psi\Vert_{\textup{L}^2}^2 +2h\varepsilon^2 \int_0^{+\infty} t \vert \chi_{k}'(t)\vert^2e^{2\varepsilon\chi_k(t)} \vert \Psi(t) \vert^2 \,dt\\
                                          &\leq Ch \Vert e^{\varepsilon\chi_k(t)} \Psi\Vert_{\textup{L}^2}^2 +2h\varepsilon^2 \int_0^{+\infty} t e^{2\varepsilon\chi_k(t)} \vert \Psi(t) \vert^2 \,dt\,.
  \end{align*}
  It implies that
  \begin{align*}
    \int_0^{+\infty}\mathcal{ V}_{h,m}(t) e^{2\varepsilon\chi_k(t)} \vert \Psi \vert^2 \,dt \leq Ch \Vert e^{\varepsilon\chi_k(t)} \Psi\Vert_{\textup{L}^2}^2 +2h\varepsilon^2 \int_0^{+\infty} t e^{2\varepsilon\chi_k(t)} \vert \Psi(t) \vert^2 \,dt\,,
  \end{align*}
  so that
  \begin{equation*}
    \int_0^{+\infty} \left(\mathcal{V}_{h,m}(t) -Ch -2h\varepsilon^2 t\right)e^{2\varepsilon\chi_k(t)} \vert \Psi \vert^2 \,dt \leq 0\,.
  \end{equation*}
  From \eqref{QT EQ Estimate V}, there exist $R>0$ and
  $C_1(R,\varepsilon)>0$ such that for all $t\geq R$, we have
  \begin{align}\label{QT EQ Vhm}
    \mathcal{V}_{h,m}(t) -Ch -2\varepsilon^2 th &\geq h \left[\frac{(b_0t-m)^2}{2t} -2\varepsilon^2t- C \right]\nonumber\\
                                                &= h \left[\left(\frac{b_0^2}{2}-2\varepsilon^2\right)t +\frac{m^2}{2t}-b_0m -C \right]\nonumber \\
                                                & \geq C_1(R,\varepsilon) h\,.
  \end{align}
  We deduce the existence of $C_2(R,\varepsilon)>0$ such that, for all
  $k\geq 1$,
  \begin{eqnarray*}
    C_1(R,\varepsilon) h \int_R^{+\infty} e^{2\varepsilon\chi_k(t)} \vert \Psi \vert^2 \,dt & \leq &   \int_{R}^{+\infty} \left(\mathcal{V}_{h,m}(t) -Ch -2h\varepsilon^2 t\right)e^{2\varepsilon\chi_k(t)} \vert \Psi \vert^2 \,dt \\
                                                                                            & \leq & \int_0^{R} \left( 2h\varepsilon^2 t+Ch-\mathcal{V}_{h,m}(t)\right)e^{2\varepsilon\chi_k(t)} \vert \Psi \vert^2 \,dt\\
                                                                                            & \leq & C_2(R,\varepsilon) h \Vert \Psi \Vert_{\textup{L}^2}^2\,.
  \end{eqnarray*}
  Then, there exists a constant $C(R,\varepsilon)$ such that, for all
  $k\geq1$,
  \begin{eqnarray*}
    \int_0^{+\infty} e^{2\varepsilon\chi_k(t)} \vert \Psi \vert^2 \,dt &=& \int_0^{R} e^{2\varepsilon\chi_k(t)} \vert \Psi \vert^2 \,dt +\int_R^{+\infty} e^{2\varepsilon\chi_k(t)} \vert \Psi \vert^2 \,dt \\
                                                                       & \leq & \int_0^{R} e^{2\varepsilon R} \vert \Psi \vert^2 \,dt +\int_R^{+\infty} e^{2\varepsilon\chi_k(t)} \vert \Psi \vert^2 \,dt \\
                                                                       & \leq &C(R,\varepsilon) \Vert \Psi \Vert_{\textup{L}^2}^2.
  \end{eqnarray*}
  Letting $k\to +\infty$ and using Fatou's lemma give
  \begin{equation*}
    \int_0^{+\infty} e^{2\varepsilon t} \vert \Psi \vert^2 \,dt \leq C(R,\varepsilon)  \Vert \Psi \Vert_{\textup{L}^2}^2.
  \end{equation*}
  Using again \eqref{QT EQ Vhm}, we notice that
  \[ \mathcal{V}_{h,m}(t) -Ch -2\varepsilon^2 th \geq
    C_{1}(R,\varepsilon) ht\,.\] Following the same steps as above, we
  get
  \begin{equation*}
    \int_0^{+\infty} t e^{2\varepsilon\chi_k(t)} \vert \Psi(t) \vert^2 \,dt \leq  C \Vert \Psi \Vert_{\textup{L}^2}^2.
  \end{equation*}
  Then, it leads to
  \begin{equation*}
    Q_{h,m}(e^{\varepsilon \chi_{k}(t)}\Psi) \leq Ch  \Vert \Psi \Vert_{\textup{L}^2}^2,
  \end{equation*}
  and we get the result.
\end{proof}
\begin{proposition}\label{Theo of lambda 0 of M}
  For all $m\in \NN$,
  \begin{equation}
    \lambda_{0}(\mathcal{M}_{h,m}) = \beta(0) h +  \frac{(m+1)\beta'(0)}{\beta(0)}h^2 +o(h^2)\,.
  \end{equation}
\end{proposition}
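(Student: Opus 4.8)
The plan is to obtain matching upper and lower bounds for $\lambda_{0}(\mathcal{M}_{h,m})$, relying on the decomposition $\mathcal{M}_{h,m}=h\mathfrak{L}^{[0]}_{m}+h^{2}\mathfrak{L}^{[1]}_{m}+R(ht)/t$ established above, on the explicitly known spectrum of $\mathfrak{L}^{[0]}_{m}$, and on the Agmon estimates of Proposition~\ref{Agmon type 1}. Since $\mathcal{M}_{h,m}$ is unitarily equivalent to $\mathfrak{L}_{h,m}$, it has compact resolvent by Proposition~\ref{QT Compact resolvent electric}, so $\lambda_{0}(\mathcal{M}_{h,m})$ is a genuine nonnegative eigenvalue; fix a normalized eigenfunction $\Psi=\Psi_{h}$.

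For the upper bound I would apply the min-max principle with the test function $u_{0}(t)=t^{m/2}e^{-\beta(0)t/2}$, the ground state of $\mathfrak{L}^{[0]}_{m}$, normalized in $\textup{L}^{2}(\RR^{+},\dd t)$. The quadratic form splits as
\[ Q_{h,m}(u)=h\,\mathcal{Q}^{[0]}_{m}(u,u)+h^{2}\langle\mathfrak{L}^{[1]}_{m}u,u\rangle_{\textup{L}^{2}}+\int_{0}^{+\infty}\frac{R(ht)}{t}\,|u|^{2}\,\dd t\,, \]
and one has $\mathcal{Q}^{[0]}_{m}(u_{0},u_{0})=\beta(0)$, while $\langle\mathfrak{L}^{[1]}_{m}u_{0},u_{0}\rangle_{\textup{L}^{2}}=\mu_{m,1}$ is an elementary Gamma-integral (the first-order Laguerre correction), and the remainder term is $\mathcal{O}(h^{3})$ by the bound $|R(s)|\le Cs^{3}$ near $0$ together with the exponential decay of $u_{0}$. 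This yields $\lambda_{0}(\mathcal{M}_{h,m})\le\beta(0)h+\mu_{m,1}h^{2}+Ch^{3}$; it is essentially the computation behind Proposition~\ref{Estimate Eig}.

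The lower bound is the substantial part. The upper bound already gives $\lambda_{0}(\mathcal{M}_{h,m})=\mathcal{O}(h)$, so Proposition~\ref{Agmon type 1} applies to $\Psi$: fixing $\varepsilon\in(0,\beta(0)/2)$ one gets, uniformly in $h$, $\int_{0}^{+\infty}e^{2\varepsilon t}|\Psi|^{2}\,\dd t\le M$ and $Q_{h,m}(e^{\varepsilon t}\Psi)\le Mh$, hence $\int_{0}^{+\infty}t^{k}|\Psi|^{2}\,\dd t\le C_{k}$ for all $k$ and $\int_{0}^{+\infty}\mathcal{V}_{h,m}(t)\,e^{2\varepsilon t}|\Psi|^{2}\,\dd t\le Mh$. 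Splitting $\int\frac{R(ht)}{t}|\Psi|^{2}$ at $ht=\delta$, using $|R(s)|\le Cs^{3}$ on $\{ht\le\delta\}$ and the exponential localization on $\{ht\ge\delta\}$ (where $\mathcal{V}_{h,m}$ and the two model potentials are dominated by $e^{2\varepsilon t}$ up to $\mathcal{O}(h^{\infty})$), I would get $\int\frac{R(ht)}{t}|\Psi|^{2}\,\dd t=\mathcal{O}(h^{3})$, so that $\lambda_{0}(\mathcal{M}_{h,m})=h\,\mathcal{Q}^{[0]}_{m}(\Psi,\Psi)+h^{2}\langle\mathfrak{L}^{[1]}_{m}\Psi,\Psi\rangle_{\textup{L}^{2}}+\mathcal{O}(h^{3})$. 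Now decompose $\Psi=c\,u_{0}+r$ with $\langle r,u_{0}\rangle_{\textup{L}^{2}}=0$ and $|c|^{2}+\|r\|_{\textup{L}^{2}}^{2}=1$. Because $\textup{Sp}(\mathfrak{L}^{[0]}_{m})=\{(2k+1)\beta(0):k\in\NN\}$ for $m\ge0$ and $u_{0}$ spans the lowest eigenspace, the spectral gap gives $\mathcal{Q}^{[0]}_{m}(\Psi,\Psi)=|c|^{2}\beta(0)+\mathcal{Q}^{[0]}_{m}(r,r)\ge\beta(0)+2\beta(0)\|r\|_{\textup{L}^{2}}^{2}$; and since $\mathfrak{L}^{[1]}_{m}$ is a semibounded polynomial multiplication operator, $\langle\mathfrak{L}^{[1]}_{m}\Psi,\Psi\rangle_{\textup{L}^{2}}\ge -C$. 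Combined with $\lambda_{0}(\mathcal{M}_{h,m})\le\beta(0)h+Ch^{2}$ this forces $\|r\|_{\textup{L}^{2}}^{2}=\mathcal{O}(h)$. Finally, expanding $\langle\mathfrak{L}^{[1]}_{m}\Psi,\Psi\rangle_{\textup{L}^{2}}=|c|^{2}\langle\mathfrak{L}^{[1]}_{m}u_{0},u_{0}\rangle_{\textup{L}^{2}}+2\,\textup{Re}\big(\bar c\,\langle\mathfrak{L}^{[1]}_{m}u_{0},r\rangle_{\textup{L}^{2}}\big)+\langle\mathfrak{L}^{[1]}_{m}r,r\rangle_{\textup{L}^{2}}$ and using $|c|^{2}=1-\mathcal{O}(h)$, $\|\mathfrak{L}^{[1]}_{m}u_{0}\|_{\textup{L}^{2}}\le C$ (exponential decay) so $|\langle\mathfrak{L}^{[1]}_{m}u_{0},r\rangle_{\textup{L}^{2}}|\le C\|r\|_{\textup{L}^{2}}=\mathcal{O}(h^{1/2})$, and $\langle\mathfrak{L}^{[1]}_{m}r,r\rangle_{\textup{L}^{2}}\ge -C\|r\|_{\textup{L}^{2}}^{2}=\mathcal{O}(h)$, I obtain $\langle\mathfrak{L}^{[1]}_{m}\Psi,\Psi\rangle_{\textup{L}^{2}}\ge\mu_{m,1}+\mathcal{O}(h^{1/2})$, whence $\lambda_{0}(\mathcal{M}_{h,m})\ge\beta(0)h+\mu_{m,1}h^{2}+\mathcal{O}(h^{5/2})$. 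Together with the upper bound this gives $\lambda_{0}(\mathcal{M}_{h,m})=\beta(0)h+\mu_{m,1}h^{2}+\mathcal{O}(h^{5/2})=\beta(0)h+\mu_{m,1}h^{2}+o(h^{2})$.

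The main obstacle is that the perturbation $h^{2}\mathfrak{L}^{[1]}_{m}+R(ht)/t$ of $h\mathfrak{L}^{[0]}_{m}$ is not uniformly small: $\mathfrak{L}^{[1]}_{m}$ is unbounded (though bounded below) and $R(ht)/t$ is only cubically controlled near the well $t\sim1$, so bare first-order perturbation theory for $h\mathfrak{L}^{[0]}_{m}$ does not close, and one must feed in the exponential localization of $\Psi$ from Proposition~\ref{Agmon type 1} both to render the large-$t$ contributions $o(h^{2})$ and to make the quantity $\langle\mathfrak{L}^{[1]}_{m}\Psi,\Psi\rangle_{\textup{L}^{2}}$ meaningful. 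A secondary technical point is to check that $\Psi$, $u_{0}$ and $r$ all belong to the form domain of $\mathfrak{L}^{[0]}_{m}$, including its singular $m^{2}/(2t)$ term; this follows from the lower bound $\mathcal{V}_{h,m}(t)\ge c\,h\,m^{2}/t$ valid for $t$ near $0$ and $h$ small, together with $\mathcal{V}_{h,m}(t)\ge c\,h\,t$ for $t$ large.
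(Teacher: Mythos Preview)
Your argument is correct, but it follows a genuinely different path from the paper's proof. The paper does \emph{not} compute the second-order term by expanding the true ground state $\Psi$ around $u_{0}$. Instead it proceeds indirectly: using the Agmon estimate on the span $E(h)$ of the first \emph{two} eigenfunctions, it shows $Q_{h,m}(\Psi)\ge h\,\mathcal{Q}^{[0]}_{m}(\Psi)-Ch^{2}\|\Psi\|^{2}$ for all $\Psi\in E(h)$, and then invokes the min--max principle for $\mathfrak{L}^{[0]}_{m}$ on this two-dimensional space to obtain $\lambda_{1}(\mathcal{M}_{h,m})\ge 3\beta(0)h-Ch^{2}$. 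Since Proposition~\ref{Estimate Eig} places \emph{some} eigenvalue within $Ch^{3}$ of $\beta(0)h+\mu_{m,1}h^{2}$, the gap forces that eigenvalue to be $\lambda_{0}$.

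Your approach is the more classical Feshbach/projection scheme: project $\Psi$ onto the unperturbed ground state, use the gap of $\mathfrak{L}^{[0]}_{m}$ to bound $\|r\|^{2}=\mathcal{O}(h)$, and then read off the $h^{2}$ coefficient from $\langle\mathfrak{L}^{[1]}_{m}u_{0},u_{0}\rangle$. This is longer and requires the extra bookkeeping you flag (finiteness of $\mathcal{Q}^{[0]}_{m}(\Psi,\Psi)$, boundedness below of the polynomial $\mathfrak{L}^{[1]}_{m}$, control of $\langle\mathfrak{L}^{[1]}_{m}r,r\rangle$ via the exponential localization inherited by $r=\Psi-cu_{0}$), but it is self-contained and even yields the slightly sharper remainder $\mathcal{O}(h^{5/2})$. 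The paper's route is shorter because it never needs to isolate the $h^{2}$ contribution inside the eigenfunction: it only needs the cruder comparison $Q_{h,m}\ge h\mathcal{Q}^{[0]}_{m}-Ch^{2}$ and the a~priori quasimode of Proposition~\ref{Estimate Eig}.
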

\begin{proof}
  Let us fix $m \in \NN$. We can choose the first eigenpairs
  $(\lambda_{i}(\mathcal{M}_{h,m}), \Psi_{i,h})_{i=1,2} $ such that
  $\Psi_{0,h}$ and $\Psi_{1,h}$ are orthogonal. We consider the
  two-dimensional space
  \begin{equation}
    E(h) = \text{ span} ( \Psi_{0,h}, \Psi_{1,h})\,.
  \end{equation}
  Let $\Psi \in E(h)$, we have
  \begin{eqnarray*}
    Q_{h,m}(\Psi) &=& 2h \int_0^{+\infty} t \vert \partial_{t} \left(\Psi \right) \vert^2 \, dt+ \int_0^{+\infty}\mathcal{ V}_{h,m}(t) \vert \Psi \vert^2 \,dt \\
                  &=& h \int_0^{+\infty} 2t \vert \partial_{t} \left(\Psi \right)\vert^2 \,+ \left( \frac{\beta(0)^2}{2}t + \frac{m^2}{2t} - m \beta(0)\right)\vert \Psi \vert^2 dt  \\
                  & & + \int_0^{+\infty} \left[\mathcal{ V}_{h,m}(t) -h\left( \frac{\beta(0)^2}{2}t + \frac{m^2}{2t} - m \beta(0)\right)\right]\vert \Psi \vert^2 \,dt\\
                  &\geq & h\mathcal{Q}_{m}^{[0]}(\Psi) - \int_0^{+\infty} \left\vert\mathcal{ V}_{h,m}(t) -h\left( \frac{\beta(0)^2}{2}t + \frac{m^2}{2t} - m \beta(0)\right)\right\vert \vert \Psi \vert^2 \,dt \,,\\
  \end{eqnarray*}
  where $\mathcal{Q}_{m}^{[0]}$ is the quadratic form associated with
  $\mathfrak{L}_{m}^{[0]}$.  We have
  \begin{equation*}
    \mathcal{V}_{h,m}(t) -h\left( \frac{\beta(0)^2}{2}t + \frac{m^2}{2t} - m \beta(0)\right) = \frac{\mathcal{O}((ht)^2)}{t}.
  \end{equation*}
  There exist $C_1>0$ and $\delta>0$ such that, for all $(h,t)$
  satisfying $ht\leq \delta$,
  \begin{equation*}
    \left\vert\mathcal{ V}_{h,m}(t) -h\left( \frac{\beta(0)^2}{2}t + \frac{m^2}{2t} - m \beta(0)\right)\right\vert \leq C_1 th^2 \,.
  \end{equation*}
  We have
  \begin{align*}
    \int_0^{\delta/h} \left\vert\mathcal{ V}_{h,m}(t) -h\left( \frac{\beta(0)^2}{2}t + \frac{m^2}{2t} - m \beta(0)\right)\right\vert  \vert \Psi \vert^2 \, dt &\leq  C_1h^2 \int_0^{\delta/h} t  \vert \Psi \vert^2  \,dt\\
                                                                                                                                                               &\leq C_2 h^2  \int_0^{+\infty} e^{2\varepsilon t}  \vert \Psi \vert^2  \,dt\\
                                                                                                                                                               &\leq Ch^2 \Vert \Psi \Vert_{\textup{L}^2}^2.
  \end{align*}
  Moreover,
  \begin{align*}
    \int_{\delta/h}^{+\infty} \left\vert\mathcal{ V}_{h,m}(t) \right\vert \vert \Psi \vert^2 \, dt &= \int_{\delta/h}^{+\infty} e^{-2\varepsilon t}  \left\vert\mathcal{ V}_{h,m}(t)\right\vert e^{2\varepsilon t}\vert \Psi \vert^2 \, dt\\
                                                                                                   &\leq \max_{t\geq \delta/h} e^{-2\varepsilon t}  \int_{0}^{+\infty} \left\vert\mathcal{ V}_{h,m}(t)\right\vert e^{2\varepsilon t}\vert \Psi \vert^2 \, dt\\
                                                                                                   &\leq Ch^2 \Vert \Psi \Vert_{\textup{L}^2}^2\,.
  \end{align*}
  Similarly, we also have
  \begin{align*}
    \int_{\delta/h}^{+\infty} h\left( \frac{\beta(0)^2}{2}t + \frac{m^2}{2t} - m \beta(0)\right) \,dt \leq Ch^2\Vert \Psi \Vert_{\textup{L}^2}^2 \,.
  \end{align*}
  Thus, we have the estimate
  \begin{equation*}
    Q_{h,m}(\Psi) \geq h\mathcal{Q}_{m}^{[0]}(\Psi) - Ch^2 \Vert \Psi \Vert_{\textup{L}^2}^2\,.
  \end{equation*}
  Since $\Psi \in E(h)$, there exists
  $(\alpha_1,\alpha_2)\in \mathbb{C}^2$ such that
  $\Psi = \alpha \Psi_{0,h}+\beta \Psi_{1,h}$ and
  \begin{equation*}
    Q_{h,m}(\Psi) = \lambda_{0}(\mathcal{M}_{h,m}) \vert \alpha_1 \vert^2 \Vert \Psi_{0,h} \Vert_{\textup{L}^2}^2+\lambda_{1}(\mathcal{M}_{h,m}) \vert \alpha_2\vert^2 \Vert \Psi_{1,h} \Vert_{\textup{L}^2}^2 \leq \lambda_{1}(\mathcal{M}_{h,m}) \Vert \Psi \Vert_{\textup{L}^2}^2.
  \end{equation*}
  By the min-max principle, we deduce that
  \begin{equation}\label{QT lambda 1 of M}
    \lambda_{1}(\mathcal{M}_{h,m}) \geq 3\beta(0)h -Ch^2 \,.
  \end{equation}
  From Proposition \ref{Estimate Eig}, we see that there exists an
  eigenvalue of $\mathcal{M}_{h,m}$ which is near
  $\beta(0)h+\frac{(m+1)\beta'(0)}{\beta(0)}h^2$ modulo $o(h^2)$. This
  eigenvalue can not be $\lambda_{k}(\mathcal{M}_{h,m})$ with
  $k\geq 1$, because if it were, the estimate
  \begin{equation*}
    3\beta(0)h -Ch^2 \leq \lambda_{k}(\mathcal{M}_{h,m}) = \beta(0)h+\frac{(m+1)\beta'(0)}{\beta(0)}h^2+o(h^2)\,,
  \end{equation*}
  would give a contradiction. The conclusion follows.
\end{proof}
\subsection{Eigenfunctions of the magnetic Laplacian and of its
  fibration}
We recall the expression of the magnetic Laplacian operator
$\mathscr{K}_{h}$
\begin{equation}
  \mathscr{K}_{h} =  -h^2 r^{-2} \left(r \partial_r \right)^2  \  + r^{-2}(-ih \partial_\theta - G(r))^2\,.
\end{equation}
For each $m\in \NN$, let $\lambda_0(\mathcal{N}_{h,m})$ be the first
eigenvalue of the operator $\mathcal{N}_{h,m}$ and $\Psi_{h,m}$ be the
associated eigenfunction.  Since $\mathcal{M}_{h,m}$ and
$\mathcal{N}_{h,m}$ are unitary equivalent, from Proposition \ref{Theo
  of lambda 0 of M}, we obtain
\begin{equation}\label{QT Eigen of N}
  \lambda_0(\mathcal{N}_{h,m})=\lambda_{0}(\mathcal{M}_{h,m})=\beta(0) h +  \frac{(m+1)\beta'(0)}{\beta(0)}h^2 +o(h^2)\,.
\end{equation}
On the other hand, $\mathfrak{L}_{h,m}$ and $\mathcal{N}_{h,m}$ also
are equivalent by
\[ \mathfrak{L}_{h,m} = T_1 \mathcal{N}_{h,m} T_1^{-1}\,,\] where
$T_1$ is defined in \eqref{QT EQ Unitary T1}. Therefore,
$\lambda_0(\mathcal{N}_{h,m})$ is also the first eigenvalue of
$\mathfrak{L}_{h,m}$ and $T_1(\Psi_{h,m})$ is the associated
eigenfunction. It results that
\begin{align*}
  \left[ \mathscr{K}_{h} -\lambda_0(\mathcal{N}_{h,m})\right]e^{im\theta}T_1(\Psi_{h,m}) = \left[ \mathfrak{L}_{h,m} -\lambda_0(\mathcal{M}_{h,m})\right] T_1(\Psi_{h,m}) e^{im\theta}=0\,.
\end{align*}
Thus $\lambda_0(\mathcal{N}_{h,m})$ belongs to the spectrum of
$\mathscr{K}_{h}$. Now, the result by Helffer and Kordyukov tells us
that, for all $k\in \NN$, the $k-$th eigenvalue of $\mathscr{K}_{h}$,
denoted by $\lambda_{k}(\mathscr{K}_{h})$ satisfies
\begin{equation}
  \lambda_{k}(\mathscr{K}_{h}) = \textbf{B}(0,0) h +  \left(2k\frac{\sqrt{\det H}}{\textbf{B}(0,0)}+\frac{(\text{Tr} H^{1/2})^2}{2\textbf{B}(0,0)}\right)h^2 +o(h^2),
\end{equation}
where $H= \frac{1}{2}\text{Hess}_{(0,0)}\textbf{B}$, so that
\begin{equation}
  \lambda_{k}(\mathscr{K}_{h}) = \beta(0) h +  \frac{(k+1)\beta'(0)}{\beta(0)}h^2 +o(h^2).
\end{equation}
Since $\lambda_{0}(\mathcal{N}_{h,m})$ is the eigenvalue of
$\mathscr{K}_{h}$, thus there exists $k\in \NN$ such that
\[\lambda_{0}(\mathcal{M}_{h,m})=\lambda_{k}(\mathscr{K}_{h})\,.\]
Considering \eqref{QT Eigen of N} and taking $h$ small enough, we
immediately obtain $k=m$ and
\[\lambda_{m}(\mathscr{K}_{h}) = \lambda_{0}(\mathcal{M}_{h,m})\,.\]
Since $\lambda_{m}(\mathscr{K}_h)$ is a simple eigenvalue, we have the
following statement
\begin{proposition}\label{QT Relation}
  When $h$ is small enough, the $m$-th eigenvalue of the magnetic
  Laplacian $\mathscr{K}_h$ is exactly the first eigenvalue of the
  operator $\mathcal{N}_{h,m}$:
  \[\lambda_{m}(\mathscr{K}_h) = \lambda_0( \mathcal{N}_{h,m})\,.\]
  The $m$-th eigenfunction of $\mathscr{K}_h$ is in the form
  \begin{equation}
    c e^{im\theta} \Psi_{h,m}\left(\frac{r^2}{2}\right)\,,
  \end{equation}
  where $\Psi_{h,m}$ is a ground-state of the operator
  $\mathcal{N}_{h,m}$ and $c\in \mathbb{C}\setminus \{0\}$.
\end{proposition}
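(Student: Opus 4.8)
The plan is to assemble the pieces already prepared: the Fourier decomposition $\mathscr{K}_h=\bigoplus_{m\in\ZZ}\mathfrak{L}_{h,m}$, the unitary equivalences $\mathfrak{L}_{h,m}=T_1\mathcal{N}_{h,m}T_1^{-1}$ and $\mathcal{M}_{h,m}=T_2^{-1}\mathcal{N}_{h,m}T_2$, the two-term expansion of $\lambda_0(\mathcal{M}_{h,m})$ from Proposition \ref{Theo of lambda 0 of M}, and the Helffer--Kordyukov asymptotics of the eigenvalues $\lambda_k(\mathscr{K}_h)$. First I would record that $T_1\Psi_{h,m}$ is a ground state of $\mathfrak{L}_{h,m}$, so that the function $u_{h,m}(r,\theta):=e^{im\theta}(T_1\Psi_{h,m})(r)=e^{im\theta}\Psi_{h,m}(r^2/2)$ lies in the $m$-th Fourier sector of $\mathscr{K}_h$ and satisfies $(\mathscr{K}_h-\lambda_0(\mathcal{N}_{h,m}))u_{h,m}=0$, since on that sector $\mathscr{K}_h$ acts as $\mathfrak{L}_{h,m}$. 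In particular $\lambda_0(\mathcal{N}_{h,m})=\lambda_0(\mathcal{M}_{h,m})$ is an eigenvalue of $\mathscr{K}_h$.

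Next I would identify \emph{which} eigenvalue it is. By Proposition \ref{Theo of lambda 0 of M}, $\lambda_0(\mathcal{N}_{h,m})=\beta(0)h+\frac{(m+1)\beta'(0)}{\beta(0)}h^2+o(h^2)$. On the other hand, the Helffer--Kordyukov expansion, specialized to the radial case using $\B(0,0)=\beta(0)$ and $H=\tfrac12\mathrm{Hess}\,\B(0)=\tfrac12\beta'(0)\,\mathrm{Id}$ (whence $2k\frac{\sqrt{\det H}}{\B(0,0)}+\frac{(\mathrm{Tr}\,H^{1/2})^2}{2\B(0,0)}=\frac{(k+1)\beta'(0)}{\beta(0)}$), gives $\lambda_k(\mathscr{K}_h)=\beta(0)h+\frac{(k+1)\beta'(0)}{\beta(0)}h^2+o(h^2)$ for every $k\in\NN$. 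Since $\beta'(0)>0$ by \eqref{IN EQ Positive}, the coefficients $\frac{(k+1)\beta'(0)}{\beta(0)}$ are strictly increasing in $k$, hence pairwise distinct; as $\lambda_0(\mathcal{N}_{h,m})$ is \emph{some} eigenvalue $\lambda_{k(h)}(\mathscr{K}_h)$, matching the coefficients of $h^2$ forces $k(h)=m$ for $h$ small enough. Thus $\lambda_m(\mathscr{K}_h)=\lambda_0(\mathcal{N}_{h,m})$.

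Finally, the form of the eigenfunction follows from simplicity. The eigenvalue $\lambda_m(\mathscr{K}_h)$ is simple for $h$ small (asymptotic simplicity of the low-lying eigenvalues), so its eigenspace is one-dimensional, and it contains $u_{h,m}=e^{im\theta}\Psi_{h,m}(r^2/2)$; therefore every eigenfunction of $\mathscr{K}_h$ associated with $\lambda_m(\mathscr{K}_h)$ is of the form $c\,e^{im\theta}\Psi_{h,m}(r^2/2)$ with $c\in\mathbb{C}\setminus\{0\}$, which is the claim.

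The main obstacle is the bookkeeping that legitimizes speaking of "$\lambda_0(\mathcal{N}_{h,m})$ being the eigenvalue $\lambda_{k(h)}(\mathscr{K}_h)$" with a well-defined finite index $k(h)$: this needs the spectrum of $\mathscr{K}_h$ to be discrete in a fixed window around $\beta(0)h$ with only finitely many eigenvalues below $\lambda_0(\mathcal{N}_{h,m})$, which is exactly what the Helffer--Kordyukov result encodes; once this is granted, the index matching via the $h^2$-coefficient is immediate. A secondary point is that the last step genuinely uses the asymptotic simplicity of the eigenvalues, which must be invoked (or cited) rather than re-proved here.
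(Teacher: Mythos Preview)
Your proposal is correct and follows essentially the same route as the paper: construct the eigenfunction $e^{im\theta}\Psi_{h,m}(r^2/2)$ via the unitary equivalence $\mathfrak{L}_{h,m}=T_1\mathcal{N}_{h,m}T_1^{-1}$ and the Fourier decomposition, then match the two-term expansion of $\lambda_0(\mathcal{N}_{h,m})$ from Proposition~\ref{Theo of lambda 0 of M} against the Helffer--Kordyukov asymptotics for $\lambda_k(\mathscr{K}_h)$ to force $k=m$, and finally invoke simplicity. Your explicit specialization of the Helffer--Kordyukov constants to the radial case and your remark on the bookkeeping needed to speak of a finite index $k(h)$ are both welcome clarifications that the paper leaves implicit.
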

\subsection{Radial magnetic WKB construction}\label{subsection radial
  WKB}
In this section, we focus on constructing a WKB Ansatz. Thanks to
Proposition \ref{QT Relation}, we just need to do the WKB analysis for
the operator $\mathcal{N}_{h,m}$.  Since $\mathcal{N}_{h,m}$ is a real
electric Schr\"{o}dinger operator in dimension $1$, one can easily
find a WKB approximation of $\Psi_{h,m}$. We recall that
\begin{equation}
  \mathcal{N}_{h,m} = -2h^2 \partial_{\rho}\rho\partial_{\rho} + \frac{(hm-a(\rho))^2}{2\rho}\,,\quad a(\rho) = \int_{0}^{\rho} \beta(\tau) \, d\tau\,.
\end{equation}
We consider the conjugated operator with real-valued smooth function
$\varphi$:
\begin{eqnarray*}
  \widehat{\mathcal{N}_{h,m}} = e^{\frac{\varphi(\rho)}{h}}\rho^{\frac{-m}{2}}\, \mathcal{N}_{h,m} \, \rho^{\frac{m}{2}}e^{\frac{-\varphi(\rho)}{h}}\,,
\end{eqnarray*}
and get
\begin{eqnarray*}
  \widehat{\mathcal{N}_{h,m}} &=& \left[\frac{(a(\rho))^2}{2\rho}-2\rho (\varphi'(\rho))^2 \right]+ h\left[4\varphi'(\rho)\rho\partial_{\rho}+ 2\varphi'(\rho)+ 2\rho\varphi''(\rho)+ 2m \varphi'(\rho)-m\frac{a(\rho)}{\rho}\right] \\
                              & & + h^2 \left[ -2\rho\partial^2_{\rho} -(2m+2)\partial_{\rho}  \right]\,.
\end{eqnarray*}
\subsubsection{The eikonal equation}
The eikonal equation reads
\begin{equation}\label{QT eikonal}
  (\varphi'(\rho))^2 = \frac{(a(\rho))^2}{4\rho^2}\,.
\end{equation}
We choose a positive solution
\[ \varphi(\rho) = \int_{0}^{\rho} \frac{a(\tau)}{2\tau} \, \dd\tau,\]
which is a smooth function on $[0,+\infty)$ because it can be
rewritten in the form
\[\varphi(\rho)
  = \int_{0}^{\rho} \frac{1}{2\tau} \int_{0}^{\tau} \beta(\xi)\, \dd
  \xi\dd \tau = \frac{1}{2} \int_{0}^{\rho} \int_{0}^{1} \beta(\xi
  \tau)\, \dd \xi \dd \tau \,.\] Then the operator $\mathcal{N}_{h,m}$
becomes
\begin{eqnarray*}
  \widehat{\mathcal{N}_{h,m}} &=& h\mathcal{N}^1 + h^2\mathcal{N}^2_{m} \,,
\end{eqnarray*}
where
\begin{equation*}
  \mathcal{N}^1 = 4\varphi'(\rho)\rho\partial_{\rho}+ 2\varphi'(\rho)+ 2\rho\varphi''(\rho)=2 a(\rho) \partial_{\rho}+ \beta(\rho)\,,
\end{equation*}
and
\begin{equation*}
  \mathcal{N}^2_{m} =-2\rho\partial^2_{\rho} -(2m+2)\partial_{\rho}\,.
\end{equation*}
We now look for a WKB Ansatz and a quasi-eigenvalue
\begin{align*}
  &a(\rho,h) \sim a_0(\rho) + h a_1(\rho) + h^2 a_2(\rho)+ ...\,,\\
  &\lambda(h) \sim h(\mu_0 +h\mu_1 +h^2\mu_2+...) \,.
\end{align*}
We substitute these formal series into the equation
\[ \left(\widehat{\mathcal{N}_{h,m}}-\lambda(h) \right)
  a(\rho,h)=0\,, \] and get
\begin{align*}
  &h :   & \left(\mathcal{N}^1 -\mu_0 \right)a_0 &= 0\\
  &h^2 : & \left(\mathcal{N}^1 -\mu_0 \right)a_1 &= \left(\mu_1-\mathcal{N}^{2}_{m}  \right)a_0\\
  &...
\end{align*}
\subsubsection{The first transport equation}
Collecting all terms of order $h^1$, we have the first transport
equation
\begin{equation}\label{Eq transport 1 raidal}
  (2 a(\rho) \partial_{\rho}+ \beta(\rho)-\mu_{0}) a_{0} =0\,.
\end{equation}
This equation has smooth solutions which do not vanish at $0$ if and
only if
\[ \mu_{0}= \beta(0)\,.\] Indeed, since $a(0)=0$, the function
\begin{equation*}
  F(\rho):=\frac{\beta(0)-\beta(\rho)}{2a(\rho)}\,,
\end{equation*}
is actually smooth on $[0,+\infty)$. Then,
\begin{equation*}
  a_{0}(\rho) = a_{0}(0) \exp\left( \int_0^\rho F(s) \, \dd s \right)\,,
\end{equation*}
with $a_{0}(0) \neq 0$. We choose $a(0)=1$.

\subsubsection{The second transport equation}
Let us gather all terms of order $h^2$ to get the second transport
equation
\begin{equation}\label{Eq transport 2 radial}
  (2 a(\rho) \partial_{\rho}+ \beta(\rho)-\mu_{0}) a_{1} = (\mu_1 +(2m+2)\partial_{\rho}+ 2\rho\partial^2_{\rho} ) a_0\,.
\end{equation}
Thus, \eqref{Eq transport 2 radial} has a smooth solution if and only
if
\begin{equation*}
  (\mu_1 +(2m+2)\partial_{\rho} )a_0 (0)=0\,,
\end{equation*}
or
\begin{equation*}
  \mu_1 = -\frac{(2m+2)\partial_{\rho} a_{0}(0)}{a_{0}(0)}.
\end{equation*}
From \eqref{Eq transport 1 raidal}, we get
\begin{equation*}
  \frac{\partial_{\rho} a_{0}(0)}{a_{0}(0)} = \lim_{\rho \to 0}\frac{\beta(0)-\beta(\rho)}{2a(\rho)} = \frac{-\beta'(0)}{2\beta(0)}.
\end{equation*}
Thus,
\begin{equation}
  \mu_1 = (m+1) \frac{\beta'(0)}{\beta(0)}.
\end{equation}
With this choice, \eqref{Eq transport 2 radial} can be rewritten as
\begin{equation}
  \partial_{\rho}a_{1}- F(\rho)a_{1} = g_1(\rho):= \frac{(\mu_1 +(2m+2)\partial_{\rho}+ 2\rho\partial^2_{\rho} ) a_0 }{2a(\rho)}\,.
\end{equation}
This equation has solutions in the form
\begin{align*}
  a_{1}(\rho) &=  \exp\left( \int_0^\rho F(s) \, \dd s\right) \int_{0}^{\rho} \exp\left( -\int_0^{\tau} F(s) \, \dd s\right)g_1(\tau)\,\dd \tau  \\
              &\qquad+ a_1(0) \exp\left( \int_0^\rho F(s) \, \dd s\right) \,.
\end{align*}
We impose $a_1(0)=0$ so that
\[ a_{1}(\rho) = \exp\left( \int_0^\rho F(s) \, \dd s\right)
  \int_{0}^{\rho} \exp\left( -\int_0^{\tau} F(s) \, \dd
    s\right)g_1(\tau)\, \dd \tau\,. \]
\subsubsection{Induction}
Let $n \in \NN$ and $n\geq 2$. We assume that
$(\mu_j)_{0\leq j\leq n}$ and $(a_j)_{0\leq j\leq n} $ are determined
and $ (a_j)_{1\leq j\leq n}$ are smooth function on $[0,+\infty) $ and
vanish at $\rho=0$. Let us show that we can determine $\mu_{n+1}$ and
$a_{n+1}$ by the $(n+1)$-th transport equation
\begin{equation}\label{Eq Transport n+1 radial}
  \left(2 a(\rho) \partial_{\rho}+ \beta(\rho)-\mu_{0} \right)a_{n+1} = \left((2m+2)\partial_{\rho} +2\rho\partial_{\rho}^2 \right) a_n + \sum_{j=1}^{n+1} \mu_{j} a_{n+1-j} \,.
\end{equation}
The equation has a smooth solution at $0$ if and only if
\[ (2m+2)\partial_{\rho}a_n(0) + \sum_{j=1}^{n} \mu_{j}
  a_{n+1-j}(0)+\mu_{n+1} a_0(0) = 0. \] Since $a_{0}(0)=1$,
$\mu_{n+1}$ is completely determined by
\[\mu_{n+1} = -(2m+2) \partial_{\rho}a_n(0)\,. \]
With this value of $\mu_{n+1}$, we can rewrite the equation \eqref{Eq
  Transport n+1 radial} as
\begin{equation}\label{Transport (n+1)'}
  \partial_{\rho}a_{n+1}- F(\rho)a_{n+1} = g_{n}(\rho)\,,
\end{equation}
where $g_{n}$ is the smooth extension of the function
\begin{equation*}
  G_n(\rho) = \frac{\left((2m+2)\partial_{\rho} +2\rho\partial_{\rho}^2 \right) a_n + \sum_{j=1}^{n+1} \mu_{j} a_{n+1-j} }{2a(\rho)}\,,
\end{equation*}
on $[0,+\infty)$.\\
There is only one solution $a_{n+1}$ such that $a_{n+1}(0)=0 $, that
is
\[ a_{n+1}(\rho) = \exp\left( \int_0^\rho F(s) \, \dd s\right)
  \int_{0}^{\rho} \exp\left( -\int_0^{\tau} F(s) \, \dd
    s\right)g_n(\tau)\, \dd \tau \,.\]
\begin{proof}[Proof of Theorem \ref{IN TH WKB R}]
  We fix $m\in \NN$. The WKB analysis provided us with functions and sequences as follows:
  \begin{enumerate}[\rm (i)]
  \item The function $\varphi(\rho)$ is given by \eqref{QT eikonal}:
    \begin{equation}\label{QT EQ Phase}
      \varphi(\rho) = \frac{1}{2} \int_{0}^{\rho} \int_{0}^{1}  \beta(\xi \tau)\, \dd \xi \dd \tau\,.
    \end{equation}
  \item The transport equations give us the existence of a sequence of
    smooth functions $(a_{m,j})_{j\in\NN}$ defined on $[0,+\infty)$
    and the sequence $(\mu_{m,j})_{j\in\NN}$ which depends on
    $m$. Notice that $a_{m,0}$ is positive since
    \[ a_{m,0}(\rho) = \exp\left(\int_{0}^{\rho} F(s) \dd s
      \right)\,. \]
  \end{enumerate}
  For each $J\in \NN$, from the WKB construction, there exists a
  smooth function $f_{m,J}(\rho)$ defined on $[0,+\infty)$ such that
  \begin{align*}
    e^{\frac{\varphi(\rho)}{h}}\rho^{\frac{-m}{2}}\, \left(\mathcal{N}_{h,m} -h\sum_{j=0}^{J}\mu_{m,j}h^{j}\right)\, \left( \rho^{\frac{m}{2}}e^{\frac{-\varphi(\rho)}{h}} \sum_{j=0}^{J} a_{m,j} h^{j} \right) = f_{m,J}(\rho) h^{J+2} \,.
  \end{align*}
  After changing of variable $\rho=\frac{r^2}{2}$, we obtain
  \begin{align*}
    e^{\frac{\varphi\left(\frac{r^2}{2}\right)}{h}}\left(\frac{r^2}{2}\right)^{\frac{-m}{2}}\, \left(\mathfrak{L}_{h,m} -h\sum_{j=0}^{J}\mu_{m,j}h^{j}\right)\,& \left( \left(\frac{r^2}{2}\right)^{\frac{m}{2}}e^{\frac{-\varphi\left(\frac{r^2}{2}\right)}{h}} \sum_{j=0}^{J} a_{m,j}\left(\frac{r^2}{2}\right) h^{j} \right) \\
                                                                                                                                                               &= f_{m,J}\left(\frac{r^2}{2}\right) h^{J+2} \,.
  \end{align*}
  By multiplying
  $\displaystyle
  \left(\frac{r^2}{2}\right)^{\frac{m}{2}}e^{\frac{-\varphi\left(\frac{r^2}{2}\right)}{h}}
  \sum_{j=0}^{J} a_{m,j}\left(\frac{r^2}{2}\right) h^{j}$ with
  $e^{im\theta}$ and using the fact that
  \[ \mathscr{K}_{h} (e^{im\theta} u) = \mathfrak{L}_{h,m}
    (e^{im\theta} u)\,,\] we deduce Theorem \ref{IN TH WKB R} is
  easily deduced.
\end{proof}
\begin{corollary}\label{QT TH Ex R}
  For all $(\varepsilon, m, J)\in (0,1)\times \NN \times \NN$, there
  exist a constant $C>0$ and $h_0>0$ such that, for all
  $h\in (0,h_0)$,
  \begin{equation}\label{exponential inequality}
    \left\Vert e^{\varepsilon\varphi(\rho)/h}\left(\mathcal{N}_{h,m}-\lambda^{J}_{h,m} \right)\Psi_{h,m}^{J} \right\Vert_{\textup{L}^2(\RR^{+})} \leq C h^{J+2},
  \end{equation}
  where
  \begin{equation}\label{WKB eigen}
    \lambda^{J}_{h,m}:= h\sum_{j=0}^{J} \mu_{j,m}h^{j} \qquad \text{ and } \qquad \Psi_{h,m}^{J} (\rho) := \chi e^{-\varphi(\rho)/h}\rho^{\frac{m}{2}} \left(\sum_{j=0}^{J} a_{j,m}h^{j}\right)\,,
  \end{equation}
  where $\chi$ is defined in \eqref{IN EQ Cut-off}.
	
  In particular,
  \begin{equation}\label{non-exponential inequality}
    \left\Vert \left(\mathcal{N}_{h,m}-\lambda^{J}_{h,m} \right)\Psi_{h,m}^{J} \right\Vert_{\textup{L}^2(\RR^{+})} \leq C h^{J+2}\,.
  \end{equation}
\end{corollary}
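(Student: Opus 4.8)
The plan is to reduce the estimate to the exact WKB identity established in the proof of Theorem~\ref{IN TH WKB R}, and then to pay the (exponentially small) cost of inserting the cut-off $\chi$. Write
\[
  \Phi_{h,m}^{J}(\rho):=\rho^{m/2}e^{-\varphi(\rho)/h}\sum_{j=0}^{J}a_{m,j}(\rho)h^{j}
\]
for the Ansatz before truncation, so that $\Psi_{h,m}^{J}=\chi\,\Phi_{h,m}^{J}$. By the WKB construction we have the exact identity
\[
  \bigl(\mathcal{N}_{h,m}-\lambda_{h,m}^{J}\bigr)\Phi_{h,m}^{J}
  =\rho^{m/2}e^{-\varphi(\rho)/h}\,f_{m,J}(\rho)\,h^{J+2},
\]
where $f_{m,J}$ is the smooth function on $[0,+\infty)$ produced there (possibly polynomial in $h$, but bounded together with its derivatives, uniformly in $h\in(0,1)$, on every compact set); in particular the factor $\rho^{m/2}$ makes $\Psi_{h,m}^{J}\in\textup{Dom}(\mathcal{N}_{h,m})$. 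Since $\chi$ is a multiplication operator, I would split
\[
  \bigl(\mathcal{N}_{h,m}-\lambda_{h,m}^{J}\bigr)\Psi_{h,m}^{J}
  =\chi\bigl(\mathcal{N}_{h,m}-\lambda_{h,m}^{J}\bigr)\Phi_{h,m}^{J}
  +\bigl[\mathcal{N}_{h,m},\chi\bigr]\Phi_{h,m}^{J},
\]
and bound the two terms separately after multiplication by $e^{\varepsilon\varphi/h}$.

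For the first term, the exact identity gives $e^{\varepsilon\varphi/h}\chi\bigl(\mathcal{N}_{h,m}-\lambda_{h,m}^{J}\bigr)\Phi_{h,m}^{J}=\chi\,\rho^{m/2}e^{(\varepsilon-1)\varphi/h}f_{m,J}\,h^{J+2}$. Because $\varphi\ge0$ and $\varepsilon<1$ we have $e^{(\varepsilon-1)\varphi/h}\le1$; moreover $\chi$ is supported in $[0,K+1]$, which has finite Lebesgue measure, and $\rho^{m/2}f_{m,J}$ is bounded there uniformly in $h\in(0,1)$. Hence this term is $\mathcal{O}(h^{J+2})$ in $\textup{L}^{2}(\RR^{+})$.

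For the commutator term, write $\mathcal{N}_{h,m}=-2h^{2}\partial_{\rho}\rho\partial_{\rho}+V_{h,m}$, where $V_{h,m}$ is multiplication by $(hm-a(\rho))^{2}/(2\rho)$; then $[\mathcal{N}_{h,m},\chi]=-2h^{2}[\partial_{\rho}\rho\partial_{\rho},\chi]$ is a first-order differential operator whose coefficients are supported in $\{\chi'\neq0\}\subset[K,K+1]$. On that annulus $\varphi$ is bounded below: $\varphi'(\rho)=a(\rho)/(2\rho)>0$ for $\rho>0$, so $\varphi$ is strictly increasing and $\varphi(\rho)\ge\varphi(K)=:c_{K}>0$ there. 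Consequently $\Phi_{h,m}^{J}$ and $\partial_{\rho}\Phi_{h,m}^{J}$ are $\mathcal{O}(h^{-1}e^{-c_{K}/h})$ on $[K,K+1]$, whence $e^{\varepsilon\varphi/h}[\mathcal{N}_{h,m},\chi]\Phi_{h,m}^{J}=\mathcal{O}\bigl(h\,e^{-(1-\varepsilon)c_{K}/h}\bigr)$ pointwise on a set of finite measure; this is $\mathcal{O}(h^{\infty})$, hence $\le Ch^{J+2}$ for $h$ small. Adding the two bounds yields \eqref{exponential inequality}. The inequality \eqref{non-exponential inequality} then follows at once from $e^{\varepsilon\varphi/h}\ge1$ (since $\varphi\ge0$).

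The only step needing genuine care --- the main obstacle, such as it is --- is the commutator term: one must check that on the annulus $\{K\le\rho\le K+1\}$ the Gaussian-type decay $e^{-\varphi/h}$ of the Ansatz strictly dominates the growth of the weight $e^{\varepsilon\varphi/h}$ (this is precisely why one restricts to $\varepsilon<1$), and that the powers of $h^{-1}$ produced by differentiating $e^{-\varphi/h}$ are absorbed by the residual exponential gain $e^{-(1-\varepsilon)c_{K}/h}$; this relies on $\varphi$ being strictly positive and monotone away from the origin, which follows from $\beta>0$.
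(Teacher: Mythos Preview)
Your proof is correct and is precisely the argument the paper intends: the corollary is stated without proof, but the paper sketches the same strategy for the analogous general result (Corollary~\ref{IN TH Exponential Smooth}), namely insert the cut-off, use the exact WKB remainder identity from the proof of Theorem~\ref{IN TH WKB R} on the support of $\chi$, and absorb the commutator term via the strict positivity of $\varphi$ on $\{\chi'\neq 0\}$ combined with $\varepsilon<1$. Your handling of the commutator $[\mathcal{N}_{h,m},\chi]=-2h^{2}(\chi'+\rho\chi''+2\rho\chi'\partial_{\rho})$ and the resulting $\mathcal{O}(h^{\infty})$ bound is exactly the ``coercivity of the phase'' remark the paper makes in the general case.
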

We may now provide an approximation of the ground-state eigenfunction
of the operator $\mathcal{N}_{h,m}$ by the WKB construction
$\Psi_{h,m}^{J}$ defined in \eqref{WKB eigen}. Let $\Psi_{h,m}$ be an
eigenfunction according to $\lambda_0(\mathcal{N}_{h,m})$, we
introduce the orthogonal projection of $\Psi_{h,m}^{J} $ on the
eigenspace of $\lambda_0(\mathcal{N}_{h,m})$
\[
  \Gamma_{m} \Psi_{h,m}^{J} = \langle\Psi_{h,m}^{J} ,\Psi_{h,m}
  \rangle \Psi_{h,m}\,.
\]

\begin{corollary}\label{Theorem approximate 1 radial}
  For all $(m, J)\in \NN\times \NN$, there exist $C>0$ and $h_0>0$
  such that, for all $h\in(0,h_0)$,
  \begin{equation}\label{QT EQ Ap R}
    \left\Vert \Psi_{h,m}^{J} - \Gamma_{m} \Psi_{h,m}^{J} \right\Vert_{\textup{L}^2(\RR^{+})} \leq Ch^{J+1}.
  \end{equation}
\end{corollary}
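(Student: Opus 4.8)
The plan is to deduce this from the quasimode estimate \eqref{non-exponential inequality} of Corollary \ref{QT TH Ex R} together with the size of the spectral gap around $\lambda_{0}(\mathcal{N}_{h,m})$, using the spectral theorem.

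First I would record the elementary "quasimode versus spectral projection" lemma: if $T$ is a self-adjoint operator with compact resolvent, $\lambda_{0}$ an isolated simple eigenvalue with associated orthogonal projection $\Gamma$, and $\mu\in\RR$ satisfies $\textup{dist}\big(\mu,\textup{Sp}(T)\setminus\{\lambda_{0}\}\big)\geq\delta>0$, then
\[
  \big\|(\mathrm{Id}-\Gamma)u\big\|\leq \frac{1}{\delta}\,\big\|(T-\mu)u\big\|\qquad\text{for all }u\in\textup{Dom}(T)\,.
\]
The proof is immediate: $\Gamma$ commutes with $T$, so $(T-\mu)(\mathrm{Id}-\Gamma)u=(\mathrm{Id}-\Gamma)(T-\mu)u$, and by the functional calculus the restriction of $T-\mu$ to the range of $\mathrm{Id}-\Gamma$ (the spectral subspace associated with $\textup{Sp}(T)\setminus\{\lambda_{0}\}$) is boundedly invertible with inverse of norm at most $\delta^{-1}$. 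I would apply this with $T=\mathcal{N}_{h,m}$, which has compact resolvent by Proposition \ref{QT Compact resolvent electric} and the unitary equivalences of Section \ref{Section rescale op} (and whose ground state is simple, as for any one-dimensional Schrödinger-type operator), $\lambda_{0}=\lambda_{0}(\mathcal{N}_{h,m})$, $\Gamma=\Gamma_{m}$, $\mu=\lambda^{J}_{h,m}$ and $u=\Psi^{J}_{h,m}$.

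The only point requiring care is that $\delta:=\textup{dist}\big(\lambda^{J}_{h,m},\textup{Sp}(\mathcal{N}_{h,m})\setminus\{\lambda_{0}(\mathcal{N}_{h,m})\}\big)$ is bounded below by a constant times $h$. On one hand, \eqref{QT Eigen of N} gives $\lambda_{0}(\mathcal{N}_{h,m})=\beta(0)h+\frac{(m+1)\beta'(0)}{\beta(0)}h^{2}+o(h^{2})$, while \eqref{QT lambda 1 of M}, transported to $\mathcal{N}_{h,m}$ through the unitary equivalence with $\mathcal{M}_{h,m}$, yields $\lambda_{1}(\mathcal{N}_{h,m})\geq 3\beta(0)h-Ch^{2}$; since the spectrum is discrete, $\lambda_{0}(\mathcal{N}_{h,m})$ is separated from the rest of the spectrum by at least $2\beta(0)h-Ch^{2}$. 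On the other hand $\lambda^{J}_{h,m}=h\sum_{j=0}^{J}\mu_{m,j}h^{j}=\beta(0)h+\frac{(m+1)\beta'(0)}{\beta(0)}h^{2}+O(h^{3})$, using $\mu_{m,0}=\beta(0)$ and $\mu_{m,1}=\frac{(m+1)\beta'(0)}{\beta(0)}$, so $\big|\lambda^{J}_{h,m}-\lambda_{0}(\mathcal{N}_{h,m})\big|=o(h)$. Combining, $\delta\geq\frac{\beta(0)}{2}h$ for $h$ small enough.

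Finally, feeding $\delta=\frac{\beta(0)}{2}h$ and the bound $\big\|(\mathcal{N}_{h,m}-\lambda^{J}_{h,m})\Psi^{J}_{h,m}\big\|_{\textup{L}^{2}(\RR^{+})}\leq Ch^{J+2}$ of \eqref{non-exponential inequality} into the lemma gives
\[
  \big\|\Psi^{J}_{h,m}-\Gamma_{m}\Psi^{J}_{h,m}\big\|_{\textup{L}^{2}(\RR^{+})}=\big\|(\mathrm{Id}-\Gamma_{m})\Psi^{J}_{h,m}\big\|_{\textup{L}^{2}(\RR^{+})}\leq \frac{2C}{\beta(0)}\,h^{J+1}\,,
\]
which is the desired estimate. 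I do not expect any genuine obstacle: the argument is the standard spectral-projection estimate for quasimodes, and the only (mild) subtlety is exactly the verification above that $\lambda^{J}_{h,m}$ lies well inside the spectral gap, which follows from matching the two asymptotic expansions.
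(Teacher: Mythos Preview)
Your proposal is correct and is precisely the argument the paper has in mind: the paper states this corollary without proof, relying (as for the analogous Corollary~\ref{IN TH Eigenfunction appr}) on ``the asymptotic simplicity of the eigenvalues and the spectral theorem''. Your careful verification of the spectral gap via \eqref{QT Eigen of N} and \eqref{QT lambda 1 of M}, combined with the quasimode estimate \eqref{non-exponential inequality}, fills in exactly the details the paper omits.
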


\subsection{A stronger WKB approximation}
Let us recall the expression of the operator $\mathcal{N}_{h,m}$
\[\mathcal{N}_{h,m}= -2h^2 \partial_{\rho} \rho \partial_{\rho} +
  \widetilde{V}_{h,m}(\rho)\,,\] where
\[ \widetilde{V}_{h,m}(\rho):=\frac{\left(
      hm-a(\rho)\right)^2}{2\rho}\,.\]
\begin{proposition}\label{Prop Agmon radial}
  Let $m\in \NN$ and let
  $(\Phi_{k})_{k\in \NN} \subset W^{1,\infty}(\RR^+,\RR)$. Assume that
  there exist $M>0$, $K_1>0$, $K_2>0$ and $R_0>0$ such that for all
  $h\in(0,1)$, $k\in \NN$
  \begin{eqnarray}
    &\widetilde{V}_{h,m}(\rho)-2\rho \vert \Phi_{k}'(\rho)\vert^2 \geq  M h \qquad &\text{ for all } \rho \in [R_0h, +\infty)\label{out the ball}\,,\\
    &\vert \Phi_{k}'(\rho) \vert \leq K_1, \qquad \vert \Phi_{k}(\rho) \vert \leq  K_2 h \qquad &\text{ for all } \rho \in [0,R_0h)\label{in the ball}\,.
  \end{eqnarray}
  Then, for all $c_0\in (0,M)$, there exists a positive constant $C>0$
  such that, for all $h\in(0,1)$, $k\in \NN$, $z\in [0,c_0 h]$, and
  $u\in \textup{Dom}(\mathcal{N}_{h,m})$,
  \begin{equation}\label{Eq bound above radial}
    \Vert  e^{\Phi_{k}/h} u \Vert_{\textup{L}^2(\RR^{+})}  \leq \frac{C}{h}\Vert e^{\Phi_{k}/h} \left(\mathcal{N}_{h,m}-z\right)u \Vert_{\textup{L}^2(\RR^{+})} + C \Vert u \Vert_{\textup{L}^2(\RR^{+})}\,.
  \end{equation}
\end{proposition}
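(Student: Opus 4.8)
The plan is to prove a classical Agmon--Persson-type weighted estimate, based on the localization identity of Proposition \ref{QT TH Lax R} adapted to $\mathcal{N}_{h,m}$. Throughout, $\Vert\cdot\Vert$ and $\langle\cdot,\cdot\rangle$ denote the $\textup{L}^2(\RR^{+})$ norm and inner product, and I write $\mathfrak{q}_{h,m}(u):=2h^2\int_0^{+\infty}\rho\,|\partial_\rho u|^2\,\dd\rho+\int_0^{+\infty}\widetilde{V}_{h,m}(\rho)\,|u|^2\,\dd\rho$ for the quadratic form of $\mathcal{N}_{h,m}$. The first step is to establish, for every real $z$ and every $u\in\textup{Dom}(\mathcal{N}_{h,m})$, the identity
\[
\mathfrak{q}_{h,m}\bigl(e^{\Phi_k/h}u\bigr)-2\int_0^{+\infty}\rho\,\bigl(\Phi_k'(\rho)\bigr)^2 e^{2\Phi_k(\rho)/h}\,|u|^2\,\dd\rho-z\,\Vert e^{\Phi_k/h}u\Vert^2=\textup{Re}\,\langle(\mathcal{N}_{h,m}-z)u,\,e^{2\Phi_k/h}u\rangle\,.
\]
This follows exactly as in the proof of Proposition \ref{QT TH Lax R}: setting $P:=\sqrt{2}\,h\sqrt{\rho}\,\partial_\rho$, the commutator $[P,e^{\Phi_k/h}]=\sqrt{2\rho}\,\Phi_k'\,e^{\Phi_k/h}$ is a bounded multiplication operator, one expands $\langle Pu,P(e^{2\Phi_k/h}u)\rangle$ and takes real parts, the purely imaginary cross terms dropping out. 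The boundary term at $\rho=0$ vanishes on $\textup{Dom}(\mathcal{N}_{h,m})$, and since $\Phi_k\in W^{1,\infty}(\RR^{+})$ is bounded and $u$ lies in the form domain, all the weighted quantities are finite; otherwise one first proves the identity for $u$ compactly supported in $(0,+\infty)$ and passes to the limit.

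The second step is to drop the nonnegative kinetic part, $\mathfrak{q}_{h,m}(e^{\Phi_k/h}u)\geq\int_0^{+\infty}\widetilde{V}_{h,m}e^{2\Phi_k/h}|u|^2\,\dd\rho$, and bound the right-hand side by Cauchy--Schwarz, so that the identity yields
\[
\int_0^{+\infty}\bigl(\widetilde{V}_{h,m}(\rho)-z-2\rho\,(\Phi_k'(\rho))^2\bigr)e^{2\Phi_k/h}|u|^2\,\dd\rho\ \leq\ \Vert e^{\Phi_k/h}(\mathcal{N}_{h,m}-z)u\Vert\,\Vert e^{\Phi_k/h}u\Vert\,.
\]
I would then split the left-hand integral at $\rho=R_0h$. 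On $[R_0h,+\infty)$, hypothesis \eqref{out the ball} together with $z\leq c_0h$ gives $\widetilde{V}_{h,m}-z-2\rho(\Phi_k')^2\geq(M-c_0)h>0$. On $[0,R_0h)$, using merely $\widetilde{V}_{h,m}\geq0$ and $|\Phi_k'|\leq K_1$ from \eqref{in the ball} gives $\widetilde{V}_{h,m}-z-2\rho(\Phi_k')^2\geq-(c_0+2R_0K_1^2)h$, while $|\Phi_k|\leq K_2h$ gives $e^{2\Phi_k/h}\leq e^{2K_2}$ there; hence the $[0,R_0h)$-contribution is at least $-(c_0+2R_0K_1^2)e^{2K_2}h\,\Vert u\Vert^2$. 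Writing $A:=\int_{R_0h}^{+\infty}e^{2\Phi_k/h}|u|^2\,\dd\rho$ and $r:=\Vert e^{\Phi_k/h}(\mathcal{N}_{h,m}-z)u\Vert$, and noting $\Vert e^{\Phi_k/h}u\Vert^2\leq e^{2K_2}\Vert u\Vert^2+A$, one reaches $(M-c_0)hA\leq C_1h\Vert u\Vert^2+r\,(e^{2K_2}\Vert u\Vert^2+A)^{1/2}$ with $C_1:=(c_0+2R_0K_1^2)e^{2K_2}$.

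The last step is absorption: from $(e^{2K_2}\Vert u\Vert^2+A)^{1/2}\leq e^{K_2}\Vert u\Vert+A^{1/2}$ and the Young inequalities $rA^{1/2}\leq\tfrac{(M-c_0)h}{2}A+\tfrac{r^2}{2(M-c_0)h}$ and $re^{K_2}\Vert u\Vert\leq\tfrac{r^2}{2h}+\tfrac{h}{2}e^{2K_2}\Vert u\Vert^2$, one gets $\tfrac{(M-c_0)h}{2}A\leq C_2h\Vert u\Vert^2+C_3r^2/h$, hence $A\leq C_4\Vert u\Vert^2+C_5r^2/h^2$ and then $\Vert e^{\Phi_k/h}u\Vert^2\leq e^{2K_2}\Vert u\Vert^2+A\leq C_6\Vert u\Vert^2+C_5r^2/h^2$; taking square roots and using $\sqrt{a+b}\leq\sqrt a+\sqrt b$ produces \eqref{Eq bound above radial}. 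Since all constants depend only on $M$, $c_0$, $R_0$, $K_1$, $K_2$, the resulting $C$ is uniform in $h\in(0,1)$, $k\in\NN$ and $z\in[0,c_0h]$. I expect the only genuine point to watch to be precisely this bookkeeping of uniformity, and in particular the region $\{\rho<R_0h\}$ where $\widetilde{V}_{h,m}$ may blow up for $m\geq1$: the remedy is to use only $\widetilde{V}_{h,m}\geq0$ there, so that the (possibly large but nonnegative) near-origin contribution of the potential is simply discarded; the localization identity and the absorption are otherwise entirely routine.
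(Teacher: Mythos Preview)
Your proof is correct and follows essentially the same route as the paper: the conjugation/commutator identity with $P=\sqrt{\rho}\,\partial_\rho$, dropping the kinetic term, splitting at $\rho=R_0h$, using $\widetilde V_{h,m}\geq 0$ near the origin, and absorbing. The only cosmetic difference is that the paper, after adding back the inner piece (bounded via $e^{2\Phi_k/h}\leq e^{2K_2}$), arrives directly at the quadratic inequality $(M-c_0)h\,\|e^{\Phi_k/h}u\|^2\leq \|e^{\Phi_k/h}(\mathcal N_{h,m}-z)u\|\,\|e^{\Phi_k/h}u\|+Lh\|u\|^2$ and solves it in one step, whereas you split into $A$ and the inner contribution and use Young's inequality; both are equivalent.
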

\begin{proof}
  We have
  \begin{equation}\label{QT EQ Lax}
    \left\langle \mathcal{N}_{h,m} u, e^{2\Phi_{k}/h}u \right\rangle_{\textup{L}^2(\RR^{+})}= 2h^2  \left\langle \sqrt{\rho} \partial_{\rho} u, \sqrt{\rho}\partial_{\rho}(e^{2\Phi_{k}/h}u)\right\rangle_{\textup{L}^2(\RR^{+})}
    +\int_{0}^{\infty} \widetilde{V}_{h,m}(\rho) e^{2\Phi_{k}/h}\vert u \vert^2 \, \dd \rho\,.
  \end{equation}
  Setting $P=\sqrt{\rho} \partial_{\rho}$, we get
  \[\textup{Re} \left(\left\langle P u, P e^{2\Phi_{k}/h}
        u\right\rangle_{\textup{L}^2(\RR^{+})} \right)= \Vert P
    e^{\Phi_{k}/h} u \Vert_{\textup{L}^2(\RR^{+})}^2 - \Vert [P,
    e^{\Phi_{k}/h}] u \Vert_{\textup{L}^2(\RR^{+})}^2\,.\] Noticing
  that
  $ [P, e^{\Phi_{k}/h}] = \frac{\sqrt{\rho}\Phi_{k}'}{h}
  e^{\Phi_{k}/h}$ and \eqref{QT EQ Lax} gives
  \begin{multline*}
    \mathrm{Re} \left\langle \mathcal{N}_{h,m}u, e^{2\Phi_{k}/h}u
    \right\rangle
    = 2h^2\int_{0}^{+\infty} \rho \vert \partial_{\rho}( e^{\Phi_{k}/h}u )\vert^2 \,\dd\rho\\
    +\int_{0}^{+\infty} \left(\widetilde{V}_{h,m}-2\rho \vert
      \Phi_{k}'(\rho) \vert^2\right) e^{2\Phi_{k}/h} \vert u \vert^2
    \,\dd \rho\,.
  \end{multline*}
  Since $\widetilde{V}_{h,m}(\rho) \geq 0 $, we get
  \begin{align*}
    \int_{R_{0}h}^{+\infty} \left(\widetilde{V}_{h,m}-2\rho \vert \Phi_{k}'(\rho) \vert^2\right)  \vert  e^{\Phi_{k}/h} u \vert^2 \,\dd \rho \leq &\Vert e^{\Phi_{k}/h} \mathcal{N}_{h,m}u \Vert_{\textup{L}^2(\RR^{+})} \Vert e^{\Phi_{k}/h} u  \Vert_{\textup{L}^2(\RR^{+})}\\
                                                                                                                                                  &+\int_{0}^{R_{0}h} 2\rho \vert \Phi_{k}'(\rho) \vert^2 e^{2\Phi_{k}/h} \vert u \vert^2 \,\dd \rho  \,.
  \end{align*}
  Using \eqref{out the ball}, we deduce that
  \begin{align*}
    Mh\int_{R_{0}h}^{+\infty} \vert  e^{\Phi_{k}/h} u \vert^2 \,\dd \rho \leq \Vert e^{\Phi_{k}/h} \mathcal{N}_{h,m}u \Vert_{\textup{L}^2(\RR^{+})} &\Vert e^{\Phi_{k}/h} u  \Vert_{\textup{L}^2(\RR^{+})} \\
                                                                                                                                                    &+\int_{0}^{R_{0}h} 2\rho \vert \Phi_{k}'(\rho) \vert^2 e^{2\Phi_{k}/h} \vert u \vert^2 \,\dd \rho \,.
  \end{align*}
  Thanks to \eqref{in the ball}, $\Phi_{k}/h$ and $\Phi_{k}'$ are
  uniformly bounded with respect to $h$ and to $k$ on
  $[0,R_0h)$. Therefore, there exists a constant $L>0$ (independent of
  $h$ and $k$) such that
  \[ Mh\int_{0}^{+\infty} \vert e^{\Phi_{k}/h} u \vert^2 \,\dd \rho
    \leq \Vert e^{\Phi_{k}/h} \mathcal{N}_{h,m}u
    \Vert_{\textup{L}^2(\RR^{+})}\Vert e^{\Phi_{k}/h} u
    \Vert_{\textup{L}^2(\RR^{+})} + Lh\int_{0}^{R_{0}h} \vert u
    \vert^2 \,\dd \rho \,.\] For $z\in [0,c_0h)$, we get
  \[ (M-c_0)h\Vert e^{\Phi_{k}/h} u \Vert^2_{\textup{L}^2(\RR^{+})}
    \leq \Vert e^{\Phi_{k}/h} \left(\mathcal{N}_{h,m}-z\right)u
    \Vert_{\textup{L}^2(\RR^{+})} \Vert e^{\Phi_{k}/h} u
    \Vert_{\textup{L}^2(\RR^{+})} + Lh \Vert u
    \Vert^2_{\textup{L}^2(\RR^{+})} \,.\] Since $M>c_0$, this gives
  \eqref{Eq bound above radial} .
\end{proof}
The first application of the above Agmon estimate is to prove the
decay of the eigenfunctions.
\begin{theorem}\label{QT TH L2}
  For all $\varepsilon\in (0,1)$, there exist $C>0$ and $h_0>0$ such
  that, for all $h\in (0,h_0)$ and all eigenfunctions $\Psi$ with
  eigenvalue of order $h$ of the operator $\mathcal{N}_{h,m}$,
  \begin{equation}
    \Vert e^{\varepsilon \varphi/h} \Psi \Vert_{\textup{L}^2(\RR^{+})} \leq C \Vert   \Psi \Vert_{\textup{L}^2(\RR^{+})}\,,
  \end{equation}
  where
  $\displaystyle \varphi(\rho) = \int_0^{\rho} \frac{a(\tau)}{2\tau}
  \, d\tau$ is given by \eqref{QT EQ Phase}.
\end{theorem}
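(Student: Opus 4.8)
The plan is to obtain Theorem \ref{QT TH L2} as a direct application of the Agmon-type estimate of Proposition \ref{Prop Agmon radial}, applied to a truncated version of the eikonal phase $\varphi$. Since $\beta$ may be unbounded, $\varphi'(\rho)=a(\rho)/(2\rho)$ need not be bounded at infinity, so $\varepsilon\varphi$ is not in $W^{1,\infty}$ and cannot be used directly as a weight; instead I would take, for $k\in\NN$,
\[
  \Phi_k(\rho) := \varepsilon\,\varphi\bigl(\min(\rho,k)\bigr)\,,
\]
which lies in $W^{1,\infty}(\RR^+,\RR)$ for each $k$, equals $\varepsilon\varphi$ on $[0,k]$, is constant on $[k,+\infty)$, and increases pointwise to $\varepsilon\varphi$ as $k\to+\infty$ (because $\varphi$ is increasing). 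The aim is to verify the hypotheses \eqref{out the ball}--\eqref{in the ball} with a constant $M$ that can be chosen as large as we wish, uniformly in $k$ and in $h$ small.

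The conditions \eqref{in the ball} on $[0,R_0h)$ are the easy part: since $\varphi'(\rho)=a(\rho)/(2\rho)\to\beta(0)/2$ as $\rho\to0$, the function $\varphi'$ is bounded on a fixed interval $[0,\delta]$, so $|\Phi_k'|=\varepsilon|\varphi'|\le K_1$ on $[0,R_0h)$ as soon as $R_0h\le\delta$, and $|\Phi_k(\rho)|\le\varepsilon\varphi(R_0h)\le\varepsilon R_0h\sup_{[0,\delta]}\varphi'=K_2h$; both bounds are uniform in $k$. For \eqref{out the ball}, the key is the eikonal identity \eqref{QT eikonal}, which gives $2\rho|\Phi_k'(\rho)|^2=\varepsilon^2 a(\rho)^2/(2\rho)$ on $[R_0h,k]$, so that
\[
  \widetilde V_{h,m}(\rho)-2\rho|\Phi_k'(\rho)|^2
  = \frac{(1-\varepsilon^2)a(\rho)^2-2hm\,a(\rho)+h^2m^2}{2\rho}\,.
\]
Setting $s=a(\rho)/h$ and using $a(\rho)\ge\beta(0)\rho$ (from \eqref{QT EQ Condition Of Beta}), one has $s\ge\beta(0)R_0$ and $\rho\le hs/\beta(0)$, hence the right-hand side is $\ge\tfrac{\beta(0)h}{2}\,P(s)/s$ with $P(s)=(1-\varepsilon^2)s^2-2ms+m^2\ge0$ provided $\beta(0)R_0$ exceeds the largest root $m/(1-\varepsilon)$ of $P$ (automatic when $m=0$); and then $P(s)/s=(1-\varepsilon^2)s-2m+m^2/s\ge(1-\varepsilon^2)\beta(0)R_0-2m$. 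Thus \eqref{out the ball} holds on $[R_0h,k]$ with $M=\tfrac{\beta(0)}{2}\bigl((1-\varepsilon^2)\beta(0)R_0-2m\bigr)$, which is positive for $R_0$ large and tends to $+\infty$ as $R_0\to+\infty$. On $[k,+\infty)$ we have $\Phi_k'=0$, and for $k\ge1$ and $h$ small the potential $\widetilde V_{h,m}(\rho)=(hm-a(\rho))^2/(2\rho)$ is bounded below by a fixed positive constant (again via $a(\rho)\ge\beta(0)\rho$, cf. \eqref{QT EQ Estimate V}), hence $\ge Mh$; so \eqref{out the ball} holds on all of $[R_0h,+\infty)$.

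With these verifications, fix a bound $C_1$ such that the eigenvalues under consideration satisfy $\lambda\le C_1h$, choose $R_0$ large enough that $M>C_1$, pick $c_0\in(C_1,M)$, and apply Proposition \ref{Prop Agmon radial}: there is $C>0$, independent of $k$ and of $h\in(0,h_0)$, such that for any eigenpair $(\lambda,\Psi)$ with $\lambda\in[0,c_0h]$,
\[
  \|e^{\Phi_k/h}\Psi\|_{\textup{L}^2(\RR^+)}
  \le \frac{C}{h}\,\|e^{\Phi_k/h}(\mathcal N_{h,m}-\lambda)\Psi\|_{\textup{L}^2(\RR^+)} + C\,\|\Psi\|_{\textup{L}^2(\RR^+)}
  = C\,\|\Psi\|_{\textup{L}^2(\RR^+)}\,,
\]
since $(\mathcal N_{h,m}-\lambda)\Psi=0$. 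Letting $k\to+\infty$ and using Fatou's lemma (recall $e^{\Phi_k/h}\uparrow e^{\varepsilon\varphi/h}$) gives $\|e^{\varepsilon\varphi/h}\Psi\|_{\textup{L}^2(\RR^+)}\le C\|\Psi\|_{\textup{L}^2(\RR^+)}$, which is the assertion. I expect the main obstacle to be exactly the uniform lower bound \eqref{out the ball}: it is the choice of $\varphi$ as a solution of the eikonal equation that turns $\widetilde V_{h,m}-2\rho(\varphi')^2$ into the manageable quadratic expression in $a(\rho)$ above, and the combination $\varepsilon<1$ together with $a(\rho)\ge\beta(0)\rho$ then supplies the coercivity, with $M$ made large by enlarging the ``classically allowed'' ball $[0,R_0h)$.
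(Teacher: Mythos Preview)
Your argument is correct and follows essentially the same route as the paper: apply Proposition~\ref{Prop Agmon radial} to a $W^{1,\infty}$ truncation of $\varepsilon\varphi$, verify \eqref{out the ball} by combining the eikonal relation $2\rho(\varphi')^2=a(\rho)^2/(2\rho)$ with the lower bound $a(\rho)\ge\beta(0)\rho$, check \eqref{in the ball} via the smoothness of $\varphi$ near $0$, and then pass to the limit with Fatou's lemma. The only cosmetic difference is the choice of cut-off: the paper uses $\Phi_k=\varepsilon\,\chi_k(\varphi)$ with the tent functions $\chi_k$ of \eqref{eq.psi} (so $|\Phi_k'|\le\varepsilon\varphi'$ everywhere and the case split $[R_0h,k]$ versus $[k,+\infty)$ is unnecessary), whereas you freeze $\varphi$ at level $k$; both truncations lead to the same quadratic lower bound and the same constant $M=\tfrac{\beta(0)}{2}\bigl((1-\varepsilon^2)\beta(0)R_0-2m\bigr)$.
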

\begin{proof}
  Let $(\chi_{k})_{k\in \NN}$ be a sequence of functions as in the
  proof of Proposition \eqref{Agmon type 1}. In order to apply
  Proposition \ref{Prop Agmon radial}, we consider
  \[ \Phi_{k}(\rho) = \varepsilon \chi_{k} (\varphi(\rho))\,. \] For
  each $k\in \NN$, we have $\Phi_{k} \in
  W^{1,\infty}(\RR^{+},\RR)$. Furthermore, one has
  \[ \vert \Phi_{k}'(\rho)\vert \leq \varepsilon \vert \varphi'(\rho)
    \vert=\frac{\varepsilon a(\rho)}{2\rho} \qquad \text{ a.e. on
    }\RR_{+}\,.\]

  Let us consider an eigenvalue $\lambda =(\mathcal{O}(h))$ and an
  associated eigenfunction $\Psi$. Then, there exist $c_0>0$ and
  $h_0>0$ such that
  \[ \vert \lambda \vert \leq c_0 h \qquad\text{ for all } h\in
    (0,h_0)\,.\] Let $M$ and $R_0$ be numbers such that
  \begin{equation*}
    \left\{ \begin{aligned}
        &M >c_0\,,\\
        &R_0 \geq  \frac{2\beta(0) m +2M}{\beta(0)^2(1-\varepsilon^2)}\,.
      \end{aligned}
    \right.
  \end{equation*}   
  Using \eqref{QT EQ Condition Of Beta}, we have
  $a(\rho) \geq b_0 \rho$ for all $\rho \in \RR_{+}$. From the
  definition of $R_0$, we have the estimate, for all $h\in (0,h_0)$,
  $k\in \NN$ and $\rho\geq R_0h$,
  \begin{align*}
    \widetilde{V}_{h,m}(\rho)- 2\rho \vert \Phi_{k}'(\rho) \vert^2 & \geq \widetilde{V}_{h,m}(\rho)  -\varepsilon^2 \frac{a^2(\rho)}{2\rho}\\
                                                                   &\geq   \frac{(1-\varepsilon^2)\left(\beta(0) \rho-\frac{hm}{1-\varepsilon^2} \right)^2 -\frac{h^2m^2}{1-\varepsilon^2}}{2\rho}\\
                                                                   &\geq \left( \frac{(1-\varepsilon^2)\beta(0)^2 R_0}{2} -\beta(0)m\right)h\\
                                                                   &\geq Mh .
  \end{align*}
  On the other hand, there exist $K_1>0$ and $K_2>0$ such that, for
  all $h\in (0,h_0)$, $k\in \NN$ and $\rho\in [0,R_0h)$,
  \[ \vert \Phi_{k}'(\rho) \vert \leq \frac{\varepsilon
      a(\rho)}{2\rho} = \frac{\varepsilon}{2} \int_{0}^{1} \beta(\rho
    s) \dd s \leq K_1\] and
  \[ \vert \Phi_k(\rho)\vert \leq \varepsilon \phi(\rho) = \varepsilon
    \int_{0}^{\rho} \int_{0}^{1} \beta(\tau s)\, \dd s\, \dd \tau \leq
    K_2 h\,. \]

  Now, we can apply Proposition \ref{Prop Agmon radial} for
  $z=\lambda$, there exists a constant $C>0$ such that, for all
  eigenfunction $\Psi$ associated with $\lambda$,
  \begin{equation*}
    \int_0^{+\infty}  e^{2\varepsilon \chi_{k} (\varphi/h)}  \vert \Psi \vert^2 \, \dd \rho \leq C \int_0^{+\infty}  \vert \Psi \vert^2 \,\dd \rho \,.
  \end{equation*}
  By letting $k\to \infty$ and using Fatou's lemma, we get
  \begin{equation*}
    \int_0^{+\infty}  e^{2\varepsilon \varphi/h }  \vert \Psi \vert^2 \, \dd \rho \leq C\int_0^{+\infty} \vert \Psi \vert^2 \, \dd \rho \,.
  \end{equation*}
\end{proof}

\begin{proof}[Proof of Theorem \ref{IN TH L2}]
  Let
  $ T: \textup{L}^2(\RR^2,\dd q) \to \textup{L}^2(\RR^{+}\times \RR/2
  \pi\mathbb{Z} ,r \dd r \dd \theta)$ be the unitary operator
  associated with the polar coordinates. Then, $T(U_{h,m})$ is the
  eigenfunction associated with the eigenvalue
  $\lambda_{m}(\mathscr{K}_{h})=\lambda_{m}(\mathscr{L}_{h,\A})$ of
  the operator $\mathscr{K}_{h}$. From Proposition \ref{QT Relation},
  \[T(U_{h,m}) = \frac{1}{\sqrt{2\pi}} e^{-im\theta} \Psi_{h,m}\left(
      \frac{r^2}{2}\right) \,,\] where $\Psi_{h,m}$ is a eigenfunction
  associated with the first eigenvalue
  $\lambda_{0}(\mathcal{N}_{h,m})$ of the operator
  $\mathcal{N}_{h,m}$. Theorem \ref{IN TH L2} easily follows.
\end{proof}

\begin{theorem}\label{QT TH Ex Est R}
For all $(\varepsilon, m, J)\in (0,1)\times \NN \times \NN $, there exist $C>0$ and $h_0>0$ such that, for all $h\in(0,h_0)$,
\begin{equation}\label{QT EQ Ex Est R}
 \Vert e^{\varepsilon\varphi(\rho)/h} \left( \Psi_{h,m}^{J} - \Gamma_{m} \Psi_{h,m}^{J} \right)\Vert_{\textup{L}^2(\RR^{+})} \leq Ch^{J+1},
\end{equation}
where $\displaystyle \varphi(\rho) = \int_0^{\rho} \frac{a(\tau)}{2\tau} \, d\tau$ is given by \eqref{QT EQ Phase}.
\end{theorem}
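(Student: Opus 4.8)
The plan is to upgrade Corollary~\ref{Theorem approximate 1 radial} to an exponentially weighted estimate by feeding the weighted consistency of the Ansatz (Corollary~\ref{QT TH Ex R}), the exponential decay of the genuine ground state (Theorem~\ref{QT TH L2}), and the weighted a priori inequality of Proposition~\ref{Prop Agmon radial} into the same scheme that was used to prove Theorem~\ref{QT TH L2}.

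First I would set
\[ u := \Psi_{h,m}^{J} - \Gamma_{m}\Psi_{h,m}^{J} = (\mathrm{Id}-\Gamma_{m})\Psi_{h,m}^{J}, \qquad f := \left(\mathcal{N}_{h,m}-\lambda_{h,m}^{J}\right)\Psi_{h,m}^{J}. \]
Since $\Psi_{h,m}^{J}$ is smooth and compactly supported (because of the cut-off $\chi$), and $\Gamma_{m}\Psi_{h,m}^{J}$ is a multiple of the eigenfunction $\Psi_{h,m}$, the function $u$ belongs to $\textup{Dom}(\mathcal{N}_{h,m})$. Because $\Gamma_{m}$ is the spectral projection of $\mathcal{N}_{h,m}$ onto the eigenline of the simple eigenvalue $\lambda_{0}(\mathcal{N}_{h,m})$, it commutes with $\mathcal{N}_{h,m}$ on the domain, whence
\[ \left(\mathcal{N}_{h,m}-\lambda_{h,m}^{J}\right)u = (\mathrm{Id}-\Gamma_{m})f = f - \langle f, \Psi_{h,m}\rangle_{\textup{L}^2(\RR^{+})}\,\Psi_{h,m}. \]

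Next I would control this right-hand side in the weighted norm $e^{\varepsilon\varphi/h}$. By Corollary~\ref{QT TH Ex R}, $\Vert e^{\varepsilon\varphi/h}f\Vert_{\textup{L}^2(\RR^{+})} \leq Ch^{J+2}$, and in particular $\vert\langle f,\Psi_{h,m}\rangle\vert \leq \Vert f\Vert_{\textup{L}^2(\RR^{+})} \leq Ch^{J+2}$; by Theorem~\ref{QT TH L2}, $\Vert e^{\varepsilon\varphi/h}\Psi_{h,m}\Vert_{\textup{L}^2(\RR^{+})} \leq C$. Hence $\Vert e^{\varepsilon\varphi/h}(\mathcal{N}_{h,m}-\lambda_{h,m}^{J})u\Vert_{\textup{L}^2(\RR^{+})} \leq Ch^{J+2}$. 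Then I would apply Proposition~\ref{Prop Agmon radial} to $u$ with the weights $\Phi_{k}(\rho):=\varepsilon\chi_{k}(\varphi(\rho))$, where $\chi_{k}$ is as in \eqref{eq.psi}, exactly as in the proof of Theorem~\ref{QT TH L2}: hypotheses \eqref{out the ball}--\eqref{in the ball} were checked there, and since the constant $M$ can be taken as large as desired by enlarging $R_{0}$, we may assume $M>c_{0}$ with $c_{0}:=\beta(0)+1$, so that $z:=\lambda_{h,m}^{J}=\beta(0)h+\mathcal{O}(h^{2})\in[0,c_{0}h)$ for $h$ small. This gives
\[ \Vert e^{\Phi_{k}/h}u\Vert_{\textup{L}^2(\RR^{+})} \leq \frac{C}{h}\Vert e^{\Phi_{k}/h}(\mathcal{N}_{h,m}-\lambda_{h,m}^{J})u\Vert_{\textup{L}^2(\RR^{+})} + C\Vert u\Vert_{\textup{L}^2(\RR^{+})}. \]
Since $\chi_{k}(s)\leq s$ implies $\Phi_{k}\leq\varepsilon\varphi$, the first term is $\leq Ch^{-1}\cdot h^{J+2}=Ch^{J+1}$, while Corollary~\ref{Theorem approximate 1 radial} bounds the second by $Ch^{J+1}$. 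Letting $k\to+\infty$ and using Fatou's lemma turns $\Phi_{k}$ into $\varepsilon\varphi$ and yields \eqref{QT EQ Ex Est R}.

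The only genuinely new point, and hence the (mild) main obstacle, is to verify that the energy $z=\lambda_{h,m}^{J}$ fits the admissible range of Proposition~\ref{Prop Agmon radial}; this is immediate once one notices that its lower bound $M$ grows with $R_{0}$. The spectral gap $\lambda_{1}(\mathcal{N}_{h,m})-\lambda_{0}(\mathcal{N}_{h,m})\gtrsim h$ enters only indirectly, through the non-weighted estimate $\Vert u\Vert_{\textup{L}^2(\RR^{+})}\leq Ch^{J+1}$ of Corollary~\ref{Theorem approximate 1 radial}, so that the remaining work is bookkeeping rather than new analysis.
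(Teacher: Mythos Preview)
Your proof is correct and follows the same scheme as the paper: apply Proposition~\ref{Prop Agmon radial} to $u=\Psi_{h,m}^{J}-\Gamma_m\Psi_{h,m}^{J}$ with the truncated weights $\Phi_k=\varepsilon\chi_k(\varphi)$, bound the weighted residual and the unweighted norm of $u$ (via Corollary~\ref{Theorem approximate 1 radial}), and conclude by Fatou's lemma. The only difference is the choice of spectral parameter: the paper takes $z=\lambda_0(\mathcal{N}_{h,m})$, so that $(\mathcal{N}_{h,m}-z)u=(\mathcal{N}_{h,m}-\lambda_0)\Psi_{h,m}^{J}$ and one only needs to control $e^{\Phi_k/h}\Psi_{h,m}^{J}$ in $L^2$, which follows immediately from the explicit form of the Ansatz; your choice $z=\lambda_{h,m}^{J}$ produces instead the extra piece $\Gamma_m f=\langle f,\Psi_{h,m}\rangle\Psi_{h,m}$ and therefore needs the weighted decay of the \emph{true} eigenfunction (Theorem~\ref{QT TH L2}) as an additional ingredient.
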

\begin{proof}
Let us fix $(\varepsilon, m, J)\in (0,1)\times \NN \times \NN $. We recall that $\Gamma_{m} \Psi_{h,m}^{J}$ is the eigenfunction with the eigenvalue $\lambda_0(\mathcal{N}_{h,m})$ that has order $h$. We consider again the sequence $(\Phi_k)_{k\in\NN}$. Applying Proposition \ref{Prop Agmon radial} to $u=\Psi_{h,m}^{J} - \Gamma_{m} \Psi_{h,m}^{J}$ we find
\begin{equation}\label{Inequality radial 2}
\Vert  e^{\Phi_{k}/h} u \Vert  \leq \frac{C}{h}\Vert e^{\Phi_{k}/h} \left(\mathcal{N}_{h,m}-\lambda_0(\mathcal{N}_{h,m})\right)u \Vert + C \Vert u\Vert\,.
\end{equation}
Thanks to \eqref{exponential inequality}, we have 
\begin{eqnarray*}
&\,& \left\Vert e^{\Phi_{k}/h} \left(\mathcal{N}_{h,m}-\lambda_0(\mathcal{N}_{h,m}) \right)u\right\Vert_{\textup{L}^2(\RR^{+})}\\
&\leq & \left\Vert e^{\Phi_{k}/h} \left(\mathcal{N}_{h,m}-\lambda_{h,m}^{J} \right)\Psi_{h,m}^{J} \right\Vert_{\textup{L}^2(\RR^{+})} + \left\vert \lambda_0(\mathcal{N}_{h,m})-\lambda^{J}_{h,m} \right\vert \left\Vert e^{\Phi_{k}/h} \Psi_{h,m}^{J} \right\Vert_{\textup{L}^2(\RR^{+})}\\
&\leq & Ch^{J+2}+C h^{J+1}\left\vert \lambda_0(\mathcal{N}_{h,m})-\lambda^{J}_{h,m}\right\vert+ C\vert \lambda_0(\mathcal{N}_{h,m})-\lambda^{J}_{h,m} \vert \left\Vert \Psi_{h,m}^{J} \right\Vert_{\textup{L}^2(\RR^{+})}\\
&\leq &  Ch^{J+2}\,.
\end{eqnarray*}
Using Corollary \ref{Theorem approximate 1 radial} and \eqref{Inequality radial 2}, we get
\[\Vert e^{\varepsilon \chi_{k}(\varphi(\rho))/h} u \Vert_{\textup{L}^2(\RR^{+})} \leq Ch^{J+1}\,, \]
for all $k\geq 1$.
Then, we take to limit $k\to +\infty$ and use Fatou's lemma.
\end{proof}

\appendix
\section{Spectrum of the Laguerre operator}\label{Appendix A}
This appendix is devoted to the Laguerre operators
\begin{eqnarray*}
\mathcal{T}_{m}= -2s \partial_{s}^2 +\left(2s-2-2\vert m \vert \right)\partial_{s} +\vert m \vert -m+1 \,,
\end{eqnarray*}
for each $m\in \ZZ$.
We denote by $L^{( m)}_n $ the generalized Laguerre polynomials: these are solutions of the differential equation
\begin{equation}
s \partial_{s}^2y +\left(\vert m \vert+1 -s \right)\partial_{s}y +ny=0\,,
\end{equation}
with $ n\in \ZZ$ , see \cite{S75}. Then, for each $m \in \ZZ$, we have
\begin{equation}
\mathcal{T}_{m}(L^{( m)}_n) = (2n +1+\vert m\vert -m )L^{( m)}_n .
\end{equation}
In particular, 
\begin{equation}\label{QT Spectrum of Laguerre 1}
\left\{2n +\vert m\vert -m +1 :n\in \NN\right\} \subset \mathrm{sp}(\mathcal{T}_{m})\,.
\end{equation}
These polynomials are orthogonal with the inner product of space $ \textup{L}^2(\RR^{+},s^{\vert m \vert} e^{-s} ds)$ and satisfy
\begin{equation*}
\int_0^{+\infty} L^{( m)}_k(s) L^{( m)}_n(s) \,s^{\vert m \vert} e^{-s} ds= \frac{\Gamma(n+\vert m\vert +1)}{n!} \delta_{k,n}\,,
\end{equation*}
where $\delta_{k,n}$ denotes Kronecker symbol.
\begin{theorem}
For each $m\in \ZZ$, the family  $(L^{( m)}_n)_{n\in \NN} $ is total in $\textup{L}^2(\RR^{+},s^{\vert m \vert} e^{-s} ds)$. Moreover, the spectrum of the operator $\mathcal{T}_{m}$ is
\[\textup{Sp}(\mathcal{T}_{m}) = \left\{2k+1+\vert m \vert-m : k\in \NN \right\}\,. \]
\end{theorem}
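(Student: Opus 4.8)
The plan is to reduce both assertions to the single classical fact that polynomials are dense in the weighted space $\textup{L}^2(\RR^+,\dd\mu_m)$, where $\dd\mu_m(s):=s^{|m|}e^{-s}\,\dd s$. Granting this, since $L^{(m)}_n$ has exact degree $n$ and the family $(L^{(m)}_n)_{n\in\NN}$ is orthogonal for $\dd\mu_m$ (as recalled above), the normalized polynomials $e_n:=L^{(m)}_n/\|L^{(m)}_n\|_{\textup{L}^2(\mu_m)}$ form a Hilbert basis of $\textup{L}^2(\RR^+,\dd\mu_m)$; this is precisely the totality statement. For the spectral claim, recall that $\mathcal{T}_m$ is, up to a positive multiplicative constant, unitarily equivalent to the Friedrichs operator $\mathfrak{L}_m^{[0]}$, hence it is self-adjoint with compact resolvent on the weighted space, and a direct check using the computation stated just above gives $\mathcal{T}_m e_n=(2n+1+|m|-m)e_n$, with $e_n$ in its domain. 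A self-adjoint operator possessing a complete orthonormal system of eigenvectors has spectrum equal to the closure of the corresponding set of eigenvalues; since $\{2n+1+|m|-m:n\in\NN\}$ is already closed and discrete, this yields $\textup{Sp}(\mathcal{T}_m)=\{2k+1+|m|-m:k\in\NN\}$, the inclusion $\supseteq$ being moreover \eqref{QT Spectrum of Laguerre 1}.

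It remains to prove the density of polynomials in $\textup{L}^2(\RR^+,\dd\mu_m)$. Let $f\in\textup{L}^2(\RR^+,\dd\mu_m)$ be orthogonal to every monomial $s^n$, $n\in\NN$, and set
\[
F(\zeta):=\int_0^{+\infty}f(s)\,e^{i\zeta s}\,\dd\mu_m(s)\,,\qquad \zeta\in\CC,\ |\mathrm{Im}\,\zeta|<\tfrac12\,.
\]
By Cauchy--Schwarz, $|F(\zeta)|\le\|f\|_{\textup{L}^2(\mu_m)}\big(\int_0^{+\infty}e^{2|\mathrm{Im}\,\zeta|s}s^{|m|}e^{-s}\,\dd s\big)^{1/2}<+\infty$ for $|\mathrm{Im}\,\zeta|<\tfrac12$, and the same bound justifies differentiating under the integral sign, so that $F$ is holomorphic on the strip $\{|\mathrm{Im}\,\zeta|<\tfrac12\}$ with $F^{(n)}(0)=i^n\int_0^{+\infty}s^nf(s)\,\dd\mu_m(s)=0$ for all $n\in\NN$. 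Hence $F\equiv0$ there; in particular the Fourier transform of the finite complex measure $f\,\dd\mu_m$ vanishes identically on $\RR$, which forces $f\,\dd\mu_m=0$, i.e. $f=0$ $\mu_m$-almost everywhere. Thus the orthogonal complement of the polynomials in $\textup{L}^2(\RR^+,\dd\mu_m)$ is trivial.

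The only substantial point is this density argument, which is exactly the determinacy of the Stieltjes moment problem for $\dd\mu_m$: the exponential decay of the weight $s^{|m|}e^{-s}$ is what makes the generating integral converge on a strip and forces $F$ to be holomorphic, and hence to vanish. Everything else is bookkeeping — the orthogonality and degree normalization of the $L^{(m)}_n$ quoted above, the self-adjointness and compact resolvent inherited from $\mathfrak{L}_m^{[0]}$, and the elementary spectral-theoretic fact that a self-adjoint operator with a complete orthonormal basis of eigenvectors has no spectrum beyond the closure of its eigenvalues.
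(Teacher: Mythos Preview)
Your proof is correct. The density argument via the holomorphic extension of $F(\zeta)=\int_0^\infty f(s)e^{i\zeta s}\,\dd\mu_m(s)$ on the strip $|\mathrm{Im}\,\zeta|<\tfrac12$ is the standard and complete way to establish totality of polynomials for a weight with exponential decay, and the spectral conclusion then follows exactly as you say: self-adjointness (inherited from the unitary equivalence with $\mathfrak{L}_m^{[0]}$) together with a complete orthonormal system of eigenvectors forces the spectrum to coincide with the closure of the eigenvalue set.

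As for comparison with the paper: there is nothing to compare. The paper states this theorem in Appendix~\ref{Appendix A} without proof, treating it as a classical fact (with a reference to Szeg\H{o}). Your argument supplies precisely the missing justification. Two minor remarks: first, the invocation of compact resolvent is not actually needed for your argument --- self-adjointness plus the complete eigenbasis already pins down the spectrum --- so you could streamline by dropping that clause. Second, the sentence ``with $e_n$ in its domain'' deserves one line of justification: under the unitary map $g\mapsto s^{|m|/2}e^{-s/2}g$ the polynomial $L_n^{(m)}$ is sent to a Schwartz-type function $s^{|m|/2}e^{-s/2}L_n^{(m)}$, which visibly lies in the form domain of $\mathfrak{L}_m^{[0]}$ and on which the operator acts as the differential expression, so the eigenvalue equation holds in the operator sense.
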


\section{About the determinant and the trace of the Hessian}\label{app.B}
From the definition of isothermal coordinates, we have
\[g_{p^{*}}(U,V) = e^{2\eta(0)} g_0( \dd \phi_{p^{*}} U ,\dd \phi_{p^{*}} V), \qquad \text{ for all } U,V \in T_{p^{*}} M\,.\]
In the matrix expression, we have
\begin{equation}\label{three}
G_{p^{*}} = e^{2\eta(0)} (D\phi)_{p^{*}}^{T} (D \phi)_{p^{*}} \,,
\end{equation}
where $(D\phi)_{p^{*}}$ is the matrix of $d\phi_{p^{*}} $.
The relation between the Hessian of $\B$ on manifold and the Hessian of $\mathcal{B}= B\circ \phi^{-1}$ is given by
\begin{equation}\label{one}
\dd^2B_{p^{*}} (V_1,V_2) = \langle \textup{Hess} \mathcal{B}(0) \dd \phi_{p^{*}} V_1 ,\dd \phi_{p^{*}} V_2 \rangle_{\RR^2}\,. 
\end{equation}
In order to compute the trace and determinant of the Hessian at $p^{*}$, we consider the endormorphism $\mathcal{H}$ of $T_{p^{*}} M$ defined by
\begin{equation}\label{two}
(\dd^2 B)_{p^{*}}(V_1,V_2) = g_{p^{*}}(\mathcal{H} V_1,V_2) \qquad \forall V_1,V_2 \in T_{p^{*}} M \,.
\end{equation}
Additionally, \eqref{one} and \eqref{two} imply that
\begin{equation}
\langle \textup{Hess}\, \mathcal{B}(0) \dd \phi_{p^{*}} V_1 ,\dd \phi_{p^{*}} V_2 \rangle_{\RR^2} = g_{p^{*}}(\mathcal{H} V_1,V_2) \qquad \forall V_1,V_2 \in T_{p^{*}} M \,,
\end{equation}
or
\begin{equation*}
(D\phi)_{p^{*}}^{T} \,\textup{Hess}\, \mathcal{B}(0)\, (D\phi)_{p^{*}} = \mathcal{H} G_{p}\,.
\end{equation*}
Using \eqref{three}, we get
\[(D\phi)_{p^{*}}^{T} \,\textup{Hess}\, \mathcal{B}(0)\, \left[(D\phi)_{p^{*}}^{T}\right]^{-1} = e^{2\eta(0)} \mathcal{H}\,. \]
Notice that
\[ \textup{Hess}\, \mathcal{B}(0) = \begin{pmatrix}
2\alpha && 0\\
0 && 2 \gamma
\end{pmatrix}\,. \]
Let $H=\frac{1}{2}\mathcal{H}$, then we can easily compute the determinant of $H$ and the trace of $H^{\frac{1}{2}}$:
\[
\det \left(H \right)=  \frac{e^{-4 \eta(0)}}{4} \det\left(\textup{Hess}\, \mathcal{B}(0)\right) = e^{-4 \eta(0)}\alpha \gamma \,,
\]
and
\begin{align*}
\textup{Tr}\, H^{1/2} &=  \textup{Tr} \left[ \frac{e^{-\eta(0)}}{\sqrt{2}} (D\phi)_{p^{*}}^{T} \,(\textup{Hess}\, \mathcal{B}(0))^{1/2}\, \left[(D\phi)_{p^{*}}^{T}\right]^{-1}\right]\\
&=\frac{e^{-\eta(0)}}{\sqrt{2}}  \textup{Tr} \left[ (\textup{Hess}\, \mathcal{B}(0))^{1/2}\right]\\
&= e^{-\eta(0)} (\sqrt{\alpha}+\sqrt{\gamma})\,.
\end{align*}

\bibliographystyle{abbrv}
\bibliography{Ref}

\end{document}